\newcommand{\Real}{\mathbb{R}}
\newcommand{\Natural}{\mathbb{N}}
\newcommand{\D}{\mathrm{d}}
\newcommand{\vol}{\mathrm{vol}}
\newcommand{\Ric}{\mathrm{Ric}}
\newcommand{\Hess}{\mathrm{Hess}}
\newcommand{\para}{/\!\!/}
\newcommand{\M}{m^{\prime}}
\newcommand{\MM}{m^{\prime\prime}}
\newcommand{\T}{t^{\prime}}
\newcommand{\TT}{t^{\prime\prime}}
\newcommand{\B}{\tau^{\prime}}
\newcommand{\BB}{\tau^{\prime\prime}}
\theoremstyle{definition}
\newtheorem{Def}{Definition}[section]
\newtheorem{Rm}{Remark}[section]
\theoremstyle{plain}
\newtheorem{Prop}[Def]{Proposition}
\newtheorem{Lem}[Def]{Lemma}
\newtheorem{Thm}[Def]{Theorem}
\newtheorem{Cor}[Def]{Corollary}
\numberwithin{equation}{section}
\title{A Coupling of Brownian Motions In the $\mathcal{L}_{0}$-Geometry}
\author{Takafumi Amaba and Kazumasa Kuwada}
\thanks{
{\bf Mathematics Subject Classification(2010)}
Primary
53C21; % Methods of Riemannian geometry, including PDE methods; curvature restrictions
Secondary
53C44, % Geometric evolution equations (mean curvature flow, Ricci flow, etc.)
%58J35, % Heat and other parabolic equation methods
%60J60  % Diffusion processes
58J65, % Diffusion processes and stochastic analysis on manifolds
60J05, % Discrete-time Markov processes on general state spaces
The first author was supported by JSPS KAKENHI Grant Number 24$\cdot$5772.
The second author is partially supported by the Grant-in-Aid for Young Scientist (B) 22740083 
}
\address{(T. Amaba) Ritsumeikan University, 1-1-1 Nojihigashi, Kusatsu, Shiga, 525-8577, Japan}
\email{fm-amaba@fc.ritsumei.ac.jp}
\address{(K. Kuwada) Tokyo Institute of Technology, 2-12-1 \^Ookayama, Meguro-ku, Tokyo 152-8551, Japan}
\email{kuwada@math.titech.ac.jp}
\date{}
\begin{document}
\maketitle

\begin{abstract}
Under a complete Ricci flow, we construct a coupling of two Brownian motion
such that their $\mathcal{L}_{0}$-distance is a supermartingale.
This recovers a result of Lott
[J. Lott, Optimal transport and Perelman's reduced volume,
Calc. Var. Partial Differential Equations 36 (2009), no. 1, 49--84.]
on the monotonicity of $\mathcal{L}_0$-distance between heat distributions. 
\end{abstract}

%%%%%%%%%%%%%%%%%%%%%%%%%%%%%%%%%%%%%%%%%%%%%%%%%%
\section{Introduction} %%%%%%%%%%%%%%%%%%%%%%%%%%%
\label{sec:intro} %%%%%%%%%%%%%%%%%%%%%%%%%%%%%%%%
%%%%%%%%%%%%%%%%%%%%%%%%%%%%%%%%%%%%%%%%%%%%%%%%%%
Since Perelman's pioneering article \cite{Pe}, there are several attempts 
to study Ricci flow in connection with heat distributions. 
As one of them, J.~Lott \cite{Lo} provides several monotonicity formulae
% , some of which are even new, 
from the viewpoint of optimal transportation 
by extending Topping's approach \cite{MT,To}. 
His argument is based on the Eulerian calculus, 
which can be rigorous if everything is sufficiently regular. 
On the other hand, 
sometimes it is not easy to verify the required regularity. 
For instance, we must take much care on it 
if the manifold is non-compact. 

Among results in \cite{Lo}, Lott introduced 
$\mathcal{L}_0$-functional on the space of (space-time) curves and 
the associated $\mathcal{L}_0$-distance as an analog of 
$\mathcal{L}$-distance in \cite{Pe,To}. 
He proved the monotonicity of transportation cost given in terms of 
$\mathcal{L}_0$-distance between heat distributions. 
By using it, he gave an alternative proof of the monotonicity of 
$\mathcal{F}$-functional in \cite{Pe}. 
The main purpose of this paper is to prove the monotonicity of 
$\mathcal{L}_0$-transportation cost by a probabilistic approach 
using a coupling of Brownian motions. 
As an advantage of our approach, we just require 
much weaker regularity assumptions and 
we can extend the result to more general transportation costs. 

The organization of this paper is as follows.
In the next subsection, 
we will state our framework and results more precisely. 
We also review Lott's result on the monotonicity of 
$\mathcal{L}_0$-transportation cost there. 
In subsection \ref{BackGround}, we give a review 
of historical background and related known results in more detail. 
All necessary calculations, formulae and properties 
on $\mathcal{L}_{0}$ are summarized in section \ref{L0Geom}. 
The reader, who wants to grasp
only heuristics or a rough story
of this paper, can skip section \ref{L0Geom} except for Proposition \ref{NonPos},
where we give a Hessian estimate for the $\mathcal{L}_{0}$-distance.
The most part of section \ref{L0Geom}
(e.g., the $\mathcal{L}_{0}$-cut locus, the $\mathcal{L}_{0}$-exponential and so on)
are analogous to ones for Riemannian distance
or $\mathcal{L}$-distance.

In sections \ref{AbsL0Cut} and \ref{PreL0Cut}, 
we will construct a coupling of $g(\tau)$-Brownian motions 
which satisfies a requirement of our main theorem 
(Theorem~\ref{Thm1} below). 
In section \ref{AbsL0Cut} we will discuss it 
under a strong regularity assumption on $\mathcal{L}_0$. 
More precisely, we assume that $\mathcal{L}_{0}$-cut locus is empty there 
(see section~\ref{L0Geom} for the definition of $\mathcal{L}_0$-cut locus; 
It is a concept analogous to the cut locus for the Riemannian distance function). 
While it is very restrictive, 
we believe that the argument given in that section will be insightful 
and that it helps us to understand the rigorous argument 
given in section \ref{PreL0Cut}. 
We employ an approximation of Brownian motions 
by geodesic random walks there, 
as in the previous result \cite{KP2} 
(see section~\ref{BackGround} for known results 
as well as the reason why we choose it).

%%%%%%%%%%%%%%%%%%%%%%%%%%%%%%%%%%%%%%%%%%%%%
\subsection{Framework and main results} %%%%%
\label{Framework} %%%%%%%%%%%%%%%%%%%%%%%%%%%
%%%%%%%%%%%%%%%%%%%%%%%%%%%%%%%%%%%%%%%%%%%%%

Let $(g(t))_{0\leq t \leq T}$ be a Ricci flow 
on a connected manifold $M$ without boundary with $d:=\dim M \geq 2$.
That is, $g(t)$ solves the following evolution equation: 
\begin{align} \label{RicFlow} %@@@@@@@@@@@@@@@@@@@@@@@@@@@@@@@@@@@@@@@ LABEL: RicFlow
\frac{\partial }{\partial t} g(t) = - 2 \Ric_{g(t)} . 
\end{align}
In the sequel, 
we always assume that $( M , g(t) )$ is 
a complete Riemannian manifold for each $t$. 

For stating Lott's result and ours, we introduce 
$\mathcal{L}_0$-functional and some notions concerning with it. 
Under the Ricci flow $g(t)$ on $d$-dimensional manifold $M$, 
$\mathcal{L}_{0}$-functional is given by 
\[
\mathcal{L}_{0} (\gamma) := \frac{1}{2} \int_{\T}^{\TT} \hspace{-3mm} \big\{
\vert \dot{\gamma}(t) \vert_{g(t)}^{2} + R_{g(t)}( \gamma (t) )
\big\} \D t
\]
for each piecewise smooth curve $\gamma :[\T ,\TT ] \to M$, 
where $R_{g(t)}$ is the scalar curvature with respect to $g(t)$. 
As a minimal value of $\mathcal{L}_0$-functional 
with the fixed endpoints (in space-time), 
we define $\mathcal{L}_{0}$-distance $L_0$. 
That is, for $0 \le \T < \TT \le T$ and $\M , \MM \in M$, 
$L_{0}^{\T,\TT} (\M, \MM)$ is given by
\[
L_{0}^{\T ,\TT} (\M ,\MM)
:=
\inf_{\gamma}
\mathcal{L}_{0} (\gamma),
\quad \M ,\MM \in M,
\]
where the infimum is taken over piecewise smooth curves 
$\gamma :[\T ,\TT] \to M$ such that 
$\gamma (\T) = \M$ and $\gamma (\TT) = \MM$.

We denote by $P(M)$ the space of Borel probability measures on $M$
and $P^{\infty}(M)$ the subspace of $P(M)$ whose element has smooth density.
For $0 \le \T < \TT \le T$ and $\mu^{\prime}$, $\mu^{\prime\prime} \in P(M)$, 
the (optimal) $\mathcal{L}_{0}$-transportation cost 
$C_{0}^{\T ,\TT} (\mu^{\prime},\mu^{\prime\prime})$
is defined by
\[
C_{0}^{\T ,\TT} (\mu^{\prime},\mu^{\prime\prime})
:=
\inf_{\pi \in \Pi (\mu^{\prime},\mu^{\prime\prime})}
\int_{M\times M} \hspace{-7mm} L_{0}^{\T ,\TT} (\M ,\MM) \pi (\D\M ,\D\MM ), 
\]
where $\Pi ( \mu^{\prime} , \mu^{\prime\prime} )$ is the set of 
couplings of $\mu^{\prime}$ and $\mu^{\prime\prime}$. 

In this framework, 
Lott proved the following: Assume that $M$ is closed. 
If 
$c^{\prime}$, $c^{\prime\prime}:[t_{0},t_{1}] \to P^{\infty}(M)$ are solutions to 
the backward heat equation 
% (in a weak sense)
%
\begin{align} \label{B-Heat} %%%%%%%%%%%%%%%%%%%%%%%%%%%%%%% LABEL: B-Heat
\frac{ \D \mu_{t} }{ \D t } = - \triangle_{g(t)} \mu_{t},
\end{align}
and if they satisfy some technical assumptions
(see Corollary 5 in \cite{Lo} for details, 
although we will mention it partially in the sequel),
then
\begin{align} \label{C0cost} %%%%%%%%%%%%%%%%%%%%%%%%%%%%%%% LABEL: C0cost
u \mapsto
C_{0}^{\T + u, \TT +u}
\big(
c^{\prime} ( \T + u ) , c^{\prime\prime} ( \TT + u )
\big)
\end{align}
is non-decreasing (Proposition 13 in \cite{Lo}). 

Let us turn to state our result. 
Until the end of the paper, we fix two time intervals 
$$
0\leq \T_{0} < \T_{1} \leq T
\quad\text{and}\quad
0\leq \TT_{0} < \TT_{1} \leq T
$$
with $\T_{0} < \TT_{0}$ and $\T_{1} - \T_{0} = \TT_{1} - \TT_{0}=:S$.
We denote by $(\T , \TT)$ the coordinate on
$$
\big\{
(\T , \TT) \in [\T_{0},\T_{1}] \times [\TT_{0},\TT_{1}] : \T < \TT
\big\} .
$$

Since it looks awkward to work with backward heat
equation (\ref{B-Heat}), we shall reverse the time by setting
$$
\B = \B(s) := \T_{1} -s \quad\text{and}\quad \BB = \BB(s) := \TT_{1} -s
$$
for $0 \leq s \leq S$.
By {\it $g(\tau')$-Brownian motion (resp.~$g(\tau'')$-Brownian motion)}, 
we mean a time-inhomogeneous diffusion process on $M$ 
associated with $\triangle_{g(\tau'(s))}$ (resp.~$\triangle_{g(\tau''(s))}$), 
where $s$ stands for the time-parameter of the process. 

\begin{Thm} \label{Thm1} %%%%%%%%%%%%%%%%%%%%%%%%% LABEL: Thm1
Assume that $(g(t))_{0\leq t\leq T}$ satisfies 
\begin{align} \label{CurAss} %@@@@@@@@@@@@@@@@@@@@ LABEL: CurAss
\inf_{(t,m) \in [ 0, T ] \times M} 
\inf_{
  \begin{subarray}{c} 
      V \in T_m M
      \\
      \| V \|_{g(t)} = 1 
  \end{subarray}    
} 
\Ric_{g(t)} (V,V) > - \infty .
\end{align}
Then for each $(\M ,\MM ) \in M \times M$, there exists a coupling of
$g(\tau^{\prime})$-Brownian motion $X=(X_{s})_{0\leq s \leq S}$
starting from $\M$ and
$g(\tau^{\prime\prime})$-Brownian motion $Y=(Y_{s})_{0\leq s \leq S}$
starting from $\MM$ such that
$$
s \mapsto L_{0}^{ \B(s) , \BB(s) } ( X_{s},Y_{s} )
$$
is a supermartingale and the map
$$
(m^{\prime}, m^{\prime\prime})
\mapsto
\text{the law of $(X,Y)$ with $( X_{0}, Y_{0} ) = ( m^{\prime}, m^{\prime\prime} )$}
$$
is measurable.
\end{Thm}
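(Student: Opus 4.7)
The plan is to follow the geodesic random walk strategy of \cite{KP2}, adapted to the $\mathcal{L}_{0}$-geometry. The infinitesimal input is Proposition \ref{NonPos}, whose Hessian-type estimate for $L_{0}$ formally says that under the right coupling of the two driving Brownian motions, the time-derivative of $L_{0}^{\tau'(s),\tau''(s)}(X_{s},Y_{s})$ is non-positive in expectation. The work is to make this rigorous in the presence of the $\mathcal{L}_{0}$-cut locus and of noncompactness.

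As a warm-up I would first dispose of the case in which the $\mathcal{L}_{0}$-cut locus is empty (the content alluded to in section \ref{AbsL0Cut}). There $L_{0}^{\tau',\tau''}$ is smooth, Proposition \ref{NonPos} applies pointwise, and one can define $(X,Y)$ as the solution of a coupled SDE whose diffusion coefficients are synchronized through the $\mathcal{L}_{0}$-exponential along the minimizing $\mathcal{L}_{0}$-geodesic joining $X_{s}$ and $Y_{s}$; It\^o's formula together with Proposition \ref{NonPos} then yields the supermartingale property directly.

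For the general case I would construct, for each step size $h>0$, a discrete approximation $(X^{h}_{k},Y^{h}_{k})_{k=0}^{\lfloor S/h\rfloor}$ as follows. Given $(X^{h}_{k},Y^{h}_{k})=(\M,\MM)$, pick a minimal $\mathcal{L}_{0}$-geodesic from $\M$ to $\MM$ (on the interval $[\tau'(s_k),\tau''(s_k)]$) in a jointly measurable way via a standard selection theorem, and draw a random tangent vector $V\in T_{\M}M$ isotropic of scale $\sqrt{h}$ with respect to $g(\tau'(s_k))$, together with its $\mathcal{L}_{0}$-parallel-translate $W\in T_{\MM}M$; set $X^{h}_{k+1}=\exp^{g(\tau'(s_k))}_{\M}V$ and $Y^{h}_{k+1}=\exp^{g(\tau''(s_k))}_{\MM}W$. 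Each marginal is a standard geodesic random walk and, by the lower Ricci bound (\ref{CurAss}), is tight on $[0,S]$ and converges to the corresponding $g(\tau')$- or $g(\tau'')$-Brownian motion. The joint law of $(X^{h},Y^{h})$ is tight, and any subsequential limit $(X,Y)$ inherits measurable dependence on $(\M,\MM)$ because the construction at each finite $h$ does. The supermartingale property is obtained by summing, over $k$, one-step inequalities
\[
\mathbb{E}\bigl[L_{0}^{\tau'(s_{k+1}),\tau''(s_{k+1})}(X^{h}_{k+1},Y^{h}_{k+1})\mid\mathcal{F}_{s_{k}}\bigr]\le L_{0}^{\tau'(s_{k}),\tau''(s_{k})}(X^{h}_{k},Y^{h}_{k})+o(h),
\]
which comes from Proposition \ref{NonPos} at configurations away from the $\mathcal{L}_{0}$-cut locus and, at cut-locus configurations, from an upper barrier obtained by an explicit test curve joining $X^{h}_{k+1}$ to $Y^{h}_{k+1}$, built by concatenating the image of the minimizing $\mathcal{L}_{0}$-geodesic through the two exponential maps with short connectors whose $\mathcal{L}_{0}$-length is $O(h^{3/2})$.

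The main obstacle will be precisely this one-step estimate across the $\mathcal{L}_{0}$-cut locus: $L_{0}$ is only semiconcave there, so Proposition \ref{NonPos} cannot be invoked directly, and the Hessian estimate must be promoted to a finite-step comparison in a barrier/viscosity sense that is robust under the random perturbation of size $\sqrt{h}$. Carrying this out uniformly in $(\M,\MM,s)$, with constants controlled only by the global Ricci lower bound and by the regularity of the Ricci flow, is the technical heart of section \ref{PreL0Cut}; once it is in hand, passage to the limit $h\downarrow0$ in the approximate supermartingale inequality, together with the already established weak convergence, completes the proof of Theorem \ref{Thm1}.
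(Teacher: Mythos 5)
Your overall architecture coincides with the paper's: coupled geodesic random walks synchronized by the space-time parallel transport of Definition~\ref{STP}, a one-step difference inequality whose conditional second-order term is controlled by Proposition~\ref{NonPos}, tightness, and passage to the limit. But the way you propose to cross the $\mathcal{L}_0$-cut locus is only a sketch, and the paper's actual device is different: at each step one inserts the \emph{midpoint} $(\hat\tau,\hat X)$ of the chosen minimal $\mathcal{L}_0$-geodesic from $(\B(s_n),X_{s_n}^{\varepsilon})$ to $(\BB(s_n),Y_{s_n}^{\varepsilon})$. By Propositions~\ref{extension} and~\ref{B-Analogy}, each half-geodesic extends beyond its endpoint with minimality preserved, so the pairs $(X_{s_n}^{\varepsilon},\hat X)$ and $(\hat X,Y_{s_n}^{\varepsilon})$ lie \emph{off} $\mathcal{L}_{0}\mathrm{cut}$, and by closedness of the cut locus (Proposition~\ref{CutLoc}(iii)) a whole compact neighbourhood $K_1^{\varepsilon}$ of the perturbed configurations stays off it for $\varepsilon$ small. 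One then Taylor-expands the smooth $L_0$ on each half and concludes with the triangle inequality; no barrier or viscosity argument at the cut locus is needed, and the remainders $Q_n^{\varepsilon}$ are controlled uniformly on $K_1^{\varepsilon_0}$. Your barrier would have to reproduce, purely at the level of test curves, not only the second variation but also the first-order time-shift term $-(\partial_{t'}+\partial_{t''})L_0$ that cancels it in Proposition~\ref{NonPos}; the $O(h^{3/2})$ connectors you mention do not supply this, and you have not carried out that computation.

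Two further points are genuine gaps. First, the claim that a subsequential limit ``inherits measurable dependence on $(\M,\MM)$ because the construction at each finite $h$ does'' is not a proof: the convergent subsequence may depend on the starting point, and a family of subsequential limits of measurable maps into the space of laws need not be measurable. The paper proves measurability a posteriori, by showing that the set $\mathcal{K}_{\M,\MM}$ of \emph{all} coupling laws with deterministic start $(\M,\MM)$ and the supermartingale property is compact, and then invoking the measurable selection theorem \cite[Theorem~6.9.6]{B}. Second, you never address integrability of $L_0^{\B(s),\BB(s)}(X_s,Y_s)$, without which the supermartingale assertion is vacuous; the paper obtains it by first proving the inequality for the truncations $\beta\wedge\Lambda_s$, localized by the exit times $\sigma_{M_0}$ (the errors $Q_n^{\varepsilon}$ are only controlled on relatively compact sets, so localization is unavoidable in the noncompact setting), and then letting $M_0\uparrow M$ and $\beta\uparrow\infty$ using the uniform lower bound on $L_0$ from Proposition~\ref{Est0}(iii). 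This is not a cosmetic point: Remark~\ref{Correction} records that the analogous integrability argument in \cite{KP2} was flawed.
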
%%%%%%%%%%%%%%%%%%%%%%%%%%%%%%%%%%%%%%%%%

Theorem \ref{Thm1} provides us a probabilistic interpretation 
of the monotonicity of \eqref{C0cost}. 
That is, roughly speaking, we can show the monotonicity 
by taking an expectation of $L_{0}^{ \B(s) , \BB(s) } ( X_{s} , Y_{s} )$. 
Actually, we can say more: 
Let $\varphi :\mathbb{R} \to \mathbb{R}$ be concave and non-decreasing. 
We define a new transportation cost  
$
C_{0, \varphi}^{ t^{\prime}, t^{\prime\prime} }
( \mu^{\prime}, \mu^{\prime\prime} )
$
by
\[
C_{0, \varphi}^{ t^{\prime}, t^{\prime\prime} }
( \mu^{\prime}, \mu^{\prime\prime} )
:=
\inf_{ \pi \in \Pi ( \mu^{\prime}, \mu^{\prime\prime} ) }
\int_{ M \times M } \hspace{-5mm}
\varphi
\big(
	L_{0}^{ t^{\prime} , t^{\prime\prime} }
	( m^{\prime}, m^{\prime\prime} )
\big)
\pi ( \D m^{\prime}, \D m^{\prime\prime} ) .
\]

\begin{Cor} \label{Cor1}%%%%%%%%%%%%%%%%%%%%%%%%%%
Assume that our Ricci flow satisfies the condition \eqref{CurAss}.
Then for any two families $c(t^{\prime})$ and $c(t^{\prime\prime})$ of probability measures
satisfying \eqref{B-Heat},
$
C_{0, \varphi}^{ t^{\prime}+s, t^{\prime\prime}+s }
\big(
	c^{\prime}( t^{\prime} +s ),
	c^{\prime\prime}(t^{\prime\prime}+s)
\big)
$
is nondecreasing in $s$,
\end{Cor}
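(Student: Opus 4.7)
The plan is to derive the corollary from Theorem \ref{Thm1} by randomizing the initial condition and passing from $L_0$ to $\varphi \circ L_0$ via Jensen's inequality. Given $s_1 < s_2$ in the admissible range, set $\B_0 := \T + s_1$, $\B_1 := \T + s_2$, $\BB_0 := \TT + s_1$, $\BB_1 := \TT + s_2$, which puts us in the framework of Theorem \ref{Thm1} with $S = s_2 - s_1$, $\B(s) = \B_1 - s$, $\BB(s) = \BB_1 - s$. Since $c^{\prime}$ solves the backward heat equation \eqref{B-Heat}, the map $s \mapsto c^{\prime}(\B(s))$ solves the forward heat equation with time-dependent generator $\triangle_{g(\B(s))}$, and likewise for $c^{\prime\prime}$. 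Hence if $(X_0,Y_0)$ is sampled from some $\mu \in \Pi(c^{\prime}(\B_1), c^{\prime\prime}(\BB_1))$ and the coupling of Theorem \ref{Thm1} is applied conditionally on $(X_0, Y_0)$—which is legitimate thanks to the asserted measurability of $(\M, \MM) \mapsto \mathrm{Law}(X,Y)$—then for each $s \in [0,S]$ the law of $(X_s, Y_s)$ lies in $\Pi(c^{\prime}(\B(s)), c^{\prime\prime}(\BB(s)))$.

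Now choose $\mu$ to be an optimizer for $C_{0,\varphi}^{\B_1, \BB_1}(c^{\prime}(\B_1), c^{\prime\prime}(\BB_1))$, whose existence follows from the weak compactness of $\Pi(c^{\prime}(\B_1), c^{\prime\prime}(\BB_1))$ combined with lower semicontinuity of $L_0$ (established in section~\ref{L0Geom}) and continuity of $\varphi$. Because $\varphi$ is concave and nondecreasing, conditional Jensen applied to the $L_0$-supermartingale of Theorem \ref{Thm1} yields that
\[
Z_s := \varphi\bigl(L_0^{\B(s), \BB(s)}(X_s, Y_s)\bigr)
\]
is itself a supermartingale, provided it is integrable. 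Taking expectations at $s = 0$ and $s = S$ then gives
\[
C_{0,\varphi}^{\T + s_1, \TT + s_1}\bigl(c^{\prime}(\T + s_1), c^{\prime\prime}(\TT + s_1)\bigr) \le E[Z_S] \le E[Z_0] = C_{0,\varphi}^{\T + s_2, \TT + s_2}\bigl(c^{\prime}(\T + s_2), c^{\prime\prime}(\TT + s_2)\bigr),
\]
since $\mathrm{Law}(X_S, Y_S)$ is an admissible coupling at time $s_1$ while $\mu$ is optimal at time $s_2$. This is precisely the monotonicity claimed.

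The main technical point, and the step I expect to cost the most work, is verifying the integrability of $Z_s$ uniformly in $s$ so that the Jensen step and the termwise comparison above are not spoiled by infinite expectations. Since $\varphi$ is concave and nondecreasing it admits an affine majorant $\varphi(x) \le a + b\,x^{+}$, so the issue reduces to an $L^{1}$-bound on $L_{0}^{\B(s), \BB(s)}(X_s, Y_s)^{+}$ uniformly in $s \in [0,S]$. I would attack this using the curvature hypothesis \eqref{CurAss}—which bounds the scalar curvature contribution to $L_0$—together with standard Gaussian-type tail estimates for the heat kernel under a uniform lower Ricci bound and the smoothness (hence in particular $L^1$-regularity) of $c^{\prime}, c^{\prime\prime}$ inherited from their membership in $P^{\infty}(M)$.
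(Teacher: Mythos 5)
Your argument is correct and follows essentially the same route as the paper's proof: take an optimal coupling $\pi$ for $C_{0,\varphi}$ at the later time (existence from continuity and lower boundedness of $L_0$), randomize the initial condition of the coupling of Theorem~\ref{Thm1} via the measurability assertion so that the marginals of $(X_s,Y_s)$ track the backward heat flows, and apply Jensen's inequality to conclude that $\varphi\bigl(L_0^{\tau^{\prime}(s),\tau^{\prime\prime}(s)}(X_s,Y_s)\bigr)$ is a supermartingale before comparing expectations. The integrability you flag as the main difficulty requires no heat-kernel estimates: Theorem~\ref{Thm1} already asserts that $L_0^{\tau^{\prime}(s),\tau^{\prime\prime}(s)}(X_s,Y_s)$ is an integrable supermartingale for each fixed starting point, so your own affine majorant $\varphi(x)\le a+b\,x^{+}$ together with the uniform lower bound on $L_0$ from Proposition~\ref{Est0}(iii) gives integrability of $\varphi(L_0)$ immediately, and the final comparison need only be made pointwise in $(m^{\prime},m^{\prime\prime})$ before integrating against $\pi$ (so no uniformity in $s$ is needed).
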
%%%%%%%%%%%%%%%%%%%%%%%%%%%%%%%%%%%%%%%%%

Note that Lott's result can be regarded as a special case of 
Corollary~\ref{Cor1}, that is, the case $\varphi (x)=x$ and $M$ is compact. 
Moreover, in order to make Otto's calculus rigorous, 
Lott further assumed that the curves $c^{\prime}$, $c^{\prime\prime}$ 
and the ``$E_{0}$-minimizing geodesics''
(see (4.9) in \cite{Lo} for the definition of $E_{0}$) 
which interpolate 
$c^{\prime}(t_{0}^{\prime}+s)$ 
and 
$c^{\prime\prime} (t_{0}^{\prime\prime}+s)$ 
lie in $P^{\infty}(M)$.
In \cite{Lo}, it is claimed that the last extra condition 
can be relaxed by giving an alternative proof which is analogous 
to Topping's approach in \cite{To}. 
In this paper, we give a proof 
based on the theory of stochastic calculus, 
and we do not only relax the extra regularity assumption 
but weaken the compactness assumption on $M$ 
to the curvature condition (\ref{CurAss}). 
Recall that, under a well-known sufficient condition to 
the unique existence of a Ricci flow in \cite{CZ,S2}, 
the condition \eqref{CurAss} is automatically satisfied. 
As a matter of fact, stochastic calculus is already used in \cite{KP2}
to extend Topping's result \cite{To} (see below for more details). 
Thus our result provides an additional evidence that 
stochastic calculus is an efficient tool to study this sort of problems.

%%%%%%%%%%%%%%%%%%%%%%%%%%%%%%%%%%%%%%%%%%%%%%%%%%
\subsection{Historical background and related results}
\label{BackGround} %@@@@@@@@@@@@@@@@@@@@@@@@@@@@@@ LABEL: BackGround
%%%%%%%%%%%%%%%%%%%%%%%%%%%%%%%%%%%%%%%%%%%%%%%%%%

Recall that the Ricci flow is a solution to 
the evolution equation \eqref{RicFlow}. 
This equation was introduced by Hamilton in \cite{H1} 
and used to find an Einstein metric 
(i.e., a metric $g$ such that $\Ric (g) = \mathrm{const.} g$) 
by deforming any given Riemannian metric $g_{0}$ with positive Ricci curvature 
on a compact $3$-manifold.

Inspired by quantum field theory, such as nonlinear $\sigma$-models, 
Perelman \cite{Pe} interprets the Ricci flow 
as a {\it gradient flow}; 
At least formally, the Ricci flow can be regarded 
as the gradient flow of the so-called 
Perelman's {\it $\mathcal{F}$-functional}.
This interpretation naturally leads 
to the {\it monotonicity formula} for $\mathcal{F}$: 
The functional $\mathcal{F}$ is nondecreasing along the Ricci flow. 
Additionally, the so-called {\it $\mathcal{W}$-functional} 
is also shown to be non-decreasing along the Ricci flow. 
As is well-known, these monotonicity formulae are effectively used 
in the resolution of Thurston's geometrization conjecture by Perelman.

Recently, alternative approaches to those formulae 
have been initiated on the basis of optimal transportation. 
For the monotonicity of $\mathcal{W}$, Topping \cite{To} gave 
an alternative proof by using the $\mathcal{L}$-transportation cost 
via so-called Lagrangian calculus. 
More precisely, he consider the optimal transportation cost whose cost function 
is given by a renormalization of Perelman's $\mathcal{L}$-distance. 
He proved the monotonicity of this transportation cost 
between (time-rescaled) heat distributions and 
derived the monotonicity of $\mathcal{W}$-functional 
by taking a sort of time-derivative of the optimal cost. 
For studying the monotonicity of $\mathcal{F}$, Lott \cite{Lo} showed 
the monotonicity of \eqref{C0cost} as explained in section~\ref{Framework}. 
Then he recovered the monotonicity of $\mathcal{F}$-functional 
(Corollary 6 in \cite{Lo}) again by taking a sort of time-derivative. 
As mentioned, Lott's argument is based on so-called Eulerian calculus 
(see e.g.~\cite{Vi} for a comparison with Lagrangian calculus). 

Such monotonicity formulae 
for optimal transportation costs 
between heat distributions are also studied in a slightly different context. 
For instance, on a complete Riemannian manifold with a fixed metric, 
the same sort of monotonicity of $L^p$-Wasserstein distance 
is equivalent to non-negative Ricci curvature or 
an $L^q$-gradient estimate for the heat semigroup 
(see \cite{RS} and references therein). 
For time-dependent metrics, 
the same sort of monotonicity of $L^2$-Wasserstein distance 
is shown to be equivalent to 
the property that the metric evolves as a super Ricci flow 
by McCann and Topping \cite{MT}. 
On one hand, the latter result is a natural extension of the former one 
since the latter recovers the former 
when the metric does not depend on time. 
On the other hand, this result can be regarded as a primitive form of 
the results \cite{Lo,To} 
(This observation is also addressed in \cite{Lo,To} themselves). 
These former results indicate that monotonicity formulae 
in the optimal transportation should be connected 
with the geometry of the space in a deeper way and 
that more studies are expected in this direction. 
\medskip

From its definition, optimal transportation cost is strongly 
related with the notion of coupling of random variables or stochastic processes. 
Thus it is natural to consider the above-mentioned problem 
by using a coupling method of stochastic processes. 
Even only in stochastic differential geometry, 
there are several results in coupling method. 
Traditionally, they paid much attention to estimating 
the time that two particle meets, 
while it does not match with our present purpose 
(see \cite{Cr,Ke,LiRo}; see \cite{Ke2,Wa} also). 
By using a similar idea as ones in those studies, 
we can construct a coupling by parallel transport 
on a complete Riemannian manifold with a lower Ricci curvature bound 
and it is tightly connected with the monotonicity of
Wasserstein distances (see \cite{RS} and references therein). 
Extensions of those kind of coupling 
to the time-dependent metric case are achieved 
by Kuwada \cite{Ku2} and by Arnaudon, Coulibaly and Thalmaier \cite{ACT}. 
A typical example of the time-dependent metric is backward (super) Ricci flow. 
To construct a coupling, 
the former used an approximation by coupled geodesic random walks 
and the latter construct a one-parameter family of coupled particles 
which consists of a string and moves continuously as time evolves. 
As a result, they recover the monotonicity formula in \cite{MT} 
and extend it to non-compact spaces. 
Topping's monotonicity formula is also proved and extended 
by Kuwada and Philipowski \cite{KP2} and 
discussed later by Cheng \cite{Ch}. 
The former used the same method as in \cite{Ku2} 
and the latter uses an argument studied in \cite{Wa}.

In all those couplings, 
we are interested in the behavior of distance-like functions 
(e.g.~distance itself or $\mathcal{L}$-distance) 
between a coupling of diffusion processes $( X_s , Y_s )$. 
In those cases, the main technical difficulty arises 
at singular points of the functions (e.g.~cut locus). 
Roughly speaking, there are two obstructions:
Firstly, the construction of the coupling itself 
depends on a regularity of the function. 
Secondly, we can not apply It\^o's formula directly 
when the coupled process lies on the singular points. 
Thus we require some indirect arguments as mentioned above 
to overcome these obstructions. 
In this paper, we follow the argument used in \cite{KP2,Ku2}. 
More precisely, we consider a coupling of geodesic random walks
$( X_{s}^{\varepsilon}, Y_{s}^{\varepsilon} )$. 
The construction of it requires less regularity 
and this fact works well to avoid the first obstruction. 
Then, instead of It\^o's formula for
$L_{0}^{ \tau^{\prime}(s), \tau^{\prime\prime}(s) } ( X_{s}, Y_{s} )$,
we can employ a {\it ``difference''} (rather than differential) {\it inequality} 
for $L_{0}^{ \tau^{\prime}(s), \tau^{\prime\prime}(s) } ( X_{s}^{\varepsilon}, Y_{s}^{\varepsilon} )$ 
at each approximation step $\varepsilon$ (see Proposition \ref{DiffIneq}), 
up to an well-controlled error. 
Along this idea, we can avoid the second obstruction. 
Since geodesic random walks converge to
(time-inhomogeneous) 
Brownian motions, 
we can obtain an estimate for a coupling of Brownian motions 
as the limit. 
For those who are interested in other approaches, 
it is worth mentioning that a comparison 
with other approaches is discussed in \cite{KP2,Ku2}.

%%%%%%%%%%%%%%%%%%%%%%%%%%%%%%%%%%%%%%%%%%%%%%%%%%
\section{$\mathcal{L}_{0}$-geometry along Ricci flow}
\label{L0Geom} %%%%%%%%%%%%%%%%%%%%%%%%%%%%%%%%%%% LABEL: L0Geom
%%%%%%%%%%%%%%%%%%%%%%%%%%%%%%%%%%%%%%%%%%%%%%%%%%

In the rest of the paper, we always assume \eqref{CurAss}. 

%%%%%%%%%%%%%%%%%%%%%%%%%%%%%%%%%%%%%%%%%%%%%
\subsection{
Differential calculus of $\mathcal{L}_0$ and $L_0$
% The first and second variational formulae for $\mathcal{L}_{0}$,
% and an estimate of (a contraction of) the Hessian for $L_{0}^{t^{\prime},t^{\prime\prime}}$
}
%%%%%%%%%%%%%%%%%%%%%%%%%%%%%%%%%%%%%%%%%%%%%
The main aim in this subsection is
to give an estimate of (a contraction of) the Hessian for
$
L_{0}^{ t^{\prime}, t^{\prime\prime} }
$,
which we will use in the subsequent sections.
Let $\gamma :[\T ,\TT ] \to M$ be a piecewise smooth curve.
For each variation of $\gamma$ with a variational vector field $V$,
we denote by $( \delta_{V}\mathcal{L}_{0} ) (\gamma )$ and
$(\delta_{V} \delta_{V}\mathcal{L}_{0} ) (\gamma )$
the first and second variation of $\mathcal{L}_{0}$.
We omit all proofs in this subsection
except for Proposition \ref{NonPos},
because all proofs are routine
(see e.g., \cite[section 17, 18 and 19]{KL}).

\begin{Prop} \label{1stVar}%%%%%%%%%%%%%%%%%%%%%%%
For any smooth variation
(not being necessarily proper)
of a smooth curve $\gamma : [t^{\prime}, t^{\prime\prime}] \to M$
with a variational vector field $V$,
we have
\begin{align*}
(\delta_{V}\mathcal{L}_{0}) (\gamma )
&=
\langle V(t), \dot{\gamma}(t) \rangle_{g(t)}
\Big\vert_{t=\T}^{t=\TT}
	+
\frac{1}{2} \int_{\T}^{\TT} \hspace{-2mm}
\langle
	\mathscr{G}_{t} ( \gamma ),
	V(t)
\rangle_{g(t)}
\D t
\end{align*}
where
$
\mathscr{G}_{t} ( \gamma )
:=
\nabla^{g(t)} R_{g(t)} - 2 \nabla_{\dot{\gamma} (t)}^{g(t)} \dot{\gamma} (t)
+ 4 \Ric_{g(t)} ( \dot{\gamma}(t), \cdot )
$.
In particular,
$
(\delta_{V}\mathcal{L}_{0}) (\gamma )
$
is independent of the choice of a variation which realizes
the variational vector field $V$ as its infinitesimal variation. 
Hence we call 
\begin{equation} \label{L0geod}
\nabla_{\dot{\gamma} (t)}^{g(t)} \dot{\gamma} (t)
- \frac{1}{2} \nabla^{g(t)} R_{g(t)}
-2 \Ric_{g(t)} ( \dot{\gamma}(t), \cdot )
= 0
\end{equation}
the $\mathcal{L}_{0}$-geodesic equation, where the $\Ric_{g(t)} ( \dot{\gamma}(t), \cdot )$ is naturally understood as
a $(1,0)$-tensor by the metric $g(t)$.
We call any solution of $\mathcal{L}_{0}$-geodesic equation
an $\mathcal{L}_{0}$-geodesic.
\end{Prop}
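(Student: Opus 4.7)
The plan is to expand $\mathcal{L}_0(\Gamma(s,\cdot))$ for a smooth variation $\Gamma:(-\varepsilon,\varepsilon)\times[\T,\TT]\to M$ with $\Gamma(0,t)=\gamma(t)$ and $\partial_s\Gamma(0,t)=V(t)$, differentiate at $s=0$, and split the result into a kinetic contribution from $\frac{1}{2}|\dot\gamma|_{g(t)}^{2}$ and a potential contribution from $\frac{1}{2}R_{g(t)}$.

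The potential piece is immediate: for each fixed $t$, $R_{g(t)}$ is just a scalar function on $M$, so $\partial_s R_{g(t)}(\Gamma(s,t))|_{s=0}=\langle\nabla^{g(t)}R_{g(t)},V(t)\rangle_{g(t)}$. This accounts directly for the $\nabla^{g(t)}R_{g(t)}$ summand in $\mathscr{G}_t(\gamma)$.

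For the kinetic piece I would differentiate $\frac{1}{2}|\partial_t\Gamma|_{g(t)}^{2}$ in $s$ at $s=0$ and use the torsion-free property of the Levi-Civita connection of $g(t)$, namely $\nabla^{g(t)}_{\partial_s}\partial_t\Gamma=\nabla^{g(t)}_{\partial_t}\partial_s\Gamma$, to rewrite the result as $\int_{\T}^{\TT}\langle\nabla^{g(t)}_{\dot\gamma}V,\dot\gamma\rangle_{g(t)}\,\D t$. The decisive step is then the integration by parts. Because the metric itself depends on $t$, the total $t$-derivative of the scalar $\langle V,\dot\gamma\rangle_{g(t)}$ equals
\[
(\partial_t g)(V,\dot\gamma)+\langle\nabla^{g(t)}_{\dot\gamma}V,\dot\gamma\rangle_{g(t)}+\langle V,\nabla^{g(t)}_{\dot\gamma}\dot\gamma\rangle_{g(t)},
\]
and the Ricci flow equation \eqref{RicFlow} converts the extra piece into $-2\Ric_{g(t)}(V,\dot\gamma)$. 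Solving this identity for $\int\langle\nabla^{g(t)}_{\dot\gamma}V,\dot\gamma\rangle_{g(t)}\,\D t$ produces the boundary term $\langle V,\dot\gamma\rangle_{g(t)}\big|_{\T}^{\TT}$, a $-\int\langle V,\nabla^{g(t)}_{\dot\gamma}\dot\gamma\rangle_{g(t)}\,\D t$ term, and a $+2\int\Ric_{g(t)}(V,\dot\gamma)\,\D t$ term; together with the overall $\frac{1}{2}$ in front of $\mathcal{L}_0$, the Ricci contribution combines with the potential contribution to give the coefficient $4$ in front of $\Ric_{g(t)}(\dot\gamma,\cdot)$ appearing in $\mathscr{G}_t(\gamma)$.

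The independence of $(\delta_V\mathcal{L}_0)(\gamma)$ from the particular variation realizing $V$ then follows instantly from the resulting formula, since the right-hand side involves only $V$, $\dot\gamma$, $R_{g(t)}$, and $\Ric_{g(t)}$ along $\gamma$, but no higher-order $s$-derivatives of $\Gamma$. The only real subtlety is the bookkeeping of the $(\partial_t g)(V,\dot\gamma)$ contribution in the integration by parts: omitting it would yield a naive ``geodesic plus scalar-gradient'' equation, whereas including it produces precisely the Ricci-coupling term that distinguishes the $\mathcal{L}_0$-geodesic equation \eqref{L0geod} from the Riemannian one. Everything else is a routine application of Levi-Civita calculus for the time-dependent metric $g(t)$.
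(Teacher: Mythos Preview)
Your argument is correct and is exactly the standard computation the paper has in mind; the paper itself omits the proof, calling it routine and referring to \cite{KL}. One minor remark on exposition: your sentence ``the Ricci contribution combines with the potential contribution to give the coefficient $4$'' is slightly misleading, since the $\nabla R$ term and the $\Ric$ term remain separate in $\mathscr{G}_t(\gamma)$ --- the coefficient $4$ arises simply because the kinetic integration by parts yields $+2\int\Ric_{g(t)}(V,\dot\gamma)\,\D t$, and factoring out the global $\tfrac{1}{2}$ doubles this to $4$.
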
%%%%%%%%%%%%%%%%%%%%%%%%%%%%%%%%%%%%%%%%

\begin{Prop} \label{1stDer} %%%%%%%%%%%%%%%%%%%%%% LABEL: 1stDer
Assume that $L_{0}$ is smooth around
$
(
	t^{\prime} , m^{\prime};
	t^{\prime\prime} , m^{\prime\prime}
)
$
and that there exists a unique $\mathcal{L}_{0}$-minimizing
curve $\gamma$ joining
$( t^{\prime} , m^{\prime} )$
and
$( t^{\prime\prime} , m^{\prime\prime} )$.
Then we have
\begin{equation*}
\left\{\begin{array}{l}
\displaystyle
\frac{ \partial L_{0}^{ \T , \TT } }{ \partial \T } ( \M , \MM )
=
-\frac{1}{2} \Big\{
| \dot{\gamma} ( \T ) |_{g( \T )}^{2}
+
R_{g( \T )} ( \M )
\Big\}
-
\nabla_{ \dot{\gamma} (\T) }^{ g(\T) }
L_{0}^{\T , \TT} ( \cdot , \MM ), \vspace{2mm} \\
%%%%%%%%%%
\displaystyle
\frac{ \partial L_{0}^{ \T , \TT } }{ \partial \TT } ( \M , \MM )
=
\frac{1}{2} \Big\{
| \dot{\gamma} ( \TT ) |_{g( \TT )}^{2}
+
R_{g( \TT )} ( \MM )
\Big\}
-
\nabla_{ \dot{\gamma} (\TT) }^{ g(\TT) }
L_{0}^{\T , \TT} ( \M , \cdot ) . 
\end{array}\right.
\end{equation*}
\end{Prop}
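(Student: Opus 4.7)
The plan is to read off both formulas from the first variation formula (Proposition~\ref{1stVar}) by choosing appropriate one-parameter families of $\mathcal{L}_0$-minimizers. Under the smoothness and uniqueness hypotheses the unique minimizer $\gamma$ is smooth and a bona fide $\mathcal{L}_0$-geodesic, i.e.\ a solution of \eqref{L0geod}; consequently the interior integral in the first variation formula vanishes along $\gamma$ and only the boundary term $\langle V(t),\dot\gamma(t)\rangle_{g(t)}\big|_{\T}^{\TT}$ survives. As a preliminary step I would record the endpoint--gradient identities
\begin{equation*}
\nabla^{g(\TT)} L_{0}^{\T,\TT}(\M,\cdot)(\MM) = \dot\gamma(\TT),
\qquad
\nabla^{g(\T)} L_{0}^{\T,\TT}(\cdot,\MM)(\M) = -\dot\gamma(\T),
\end{equation*}
obtained by taking a smooth family $\gamma_u$ of $\mathcal{L}_0$-minimizers fixing one space-time endpoint and displacing the other along a tangent vector $v$, so that the surviving boundary term pairs $v$ with $\pm\dot\gamma$.

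To handle $\partial L_0^{\T,\TT}/\partial\TT$, I would pick a smooth family of $\mathcal{L}_0$-minimizers $\gamma_s:[\T,\TT+s]\to M$ with $\gamma_s(\T)=\M$, $\gamma_s(\TT+s)=\MM$ and $\gamma_0=\gamma$, so that $L_0^{\T,\TT+s}(\M,\MM)=\mathcal{L}_0(\gamma_s)$. Differentiating in $s$ at $s=0$ splits into two pieces: a moving--upper--limit contribution $\tfrac12\{|\dot\gamma(\TT)|_{g(\TT)}^{2}+R_{g(\TT)}(\MM)\}$ coming from the fundamental theorem of calculus, and a variational contribution $(\delta_V\mathcal{L}_0)(\gamma)$ on the fixed interval $[\T,\TT]$, where $V(t):=\partial_s\gamma_s(t)|_{s=0}$. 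Differentiating the endpoint constraints at $s=0$ yields $V(\T)=0$ and $V(\TT)=-\dot\gamma(\TT)$. Since $\gamma$ solves \eqref{L0geod}, Proposition~\ref{1stVar} collapses the variational contribution to $\langle V(\TT),\dot\gamma(\TT)\rangle_{g(\TT)}=-|\dot\gamma(\TT)|_{g(\TT)}^{2}$, which the preliminary identity rewrites as $-\nabla^{g(\TT)}_{\dot\gamma(\TT)}L_0^{\T,\TT}(\M,\cdot)$; summing the two pieces gives the claimed formula. The $\partial/\partial\T$ formula is obtained by the mirror--image argument at the lower limit, where the fundamental--theorem contribution carries the opposite sign and the constraint derivation now forces $V(\T)=-\dot\gamma(\T)$, $V(\TT)=0$.

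The main technical obstacle, and the only genuinely non-routine point, is producing the smooth families of minimizers $\gamma_u$ and $\gamma_s$ that make the differentiation of $\mathcal{L}_0(\gamma_u)$ and $\mathcal{L}_0(\gamma_s)$ legitimate. This is precisely where the hypotheses that $L_0$ is smooth near the base point and that the minimizing curve is unique are used: together they allow an implicit function theorem argument, applied either to the endpoint map of the $\mathcal{L}_0$-geodesic ODE \eqref{L0geod} or to the $\mathcal{L}_0$-exponential map (to be developed in the subsequent subsections of section~\ref{L0Geom}), to produce such families depending smoothly on the perturbation parameter. Once these families are available, the rest is direct bookkeeping between the moving-limit term and the first variation formula.
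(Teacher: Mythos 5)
Your computation is correct and all the signs and boundary terms check out against Proposition \ref{1stDer-2}, but the paper omits this proof as routine (deferring to Kleiner--Lott), and the intended argument is organized differently and more economically than yours. The form of the statement --- with the term $\nabla_{\dot\gamma}L_0$ left unevaluated --- is exactly what the chain rule produces if one never varies the minimizer: since every restriction $\gamma|_{[\T,t]}$ of the minimizer is itself $\mathcal{L}_0$-minimizing, one has $L_0^{\T,t}(\M,\gamma(t))=\tfrac12\int_{\T}^{t}\{|\dot\gamma(u)|^2_{g(u)}+R_{g(u)}(\gamma(u))\}\,\D u$ for $t\le\TT$, and differentiating at $t=\TT$ (fundamental theorem of calculus on the right-hand side, chain rule on the left-hand side, legitimate by the assumed smoothness of $L_0$) gives the second formula verbatim; the mirror argument at the lower limit gives the first. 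Your route --- keeping $\MM$ fixed and differentiating through a family $\gamma_s$ of minimizers on $[\T,\TT+s]$ --- reaches the same answer but at the cost of (a) first proving the endpoint-gradient identities of Proposition \ref{1stDer-2}, and (b) constructing the smooth family $\gamma_s$. You rightly flag (b) as the crux, but your justification has a gap: applying the implicit function theorem to the $\mathcal{L}_0$-exponential map requires $\dot\gamma(\T)$ to be a regular point of $\mathcal{L}_{0}\exp_{\M}^{\T,\TT}$, and that is not an immediate consequence of ``$L_0$ smooth plus minimizer unique''; in this paper it is only established through Lemma \ref{lem:J} and Proposition \ref{extension}, which come later and are not available here. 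The chain-rule proof --- or, for the gradient identities, the one-sided comparison $L_0^{\T,\TT}(\M,f(u,\TT))\le\mathcal{L}_0(f(u,\cdot))$ with equality at $u=0$ for an arbitrary smooth variation $f$ of $\gamma$, combined with the assumed differentiability of $L_0$ --- avoids families of minimizers altogether, which is what makes the proposition genuinely routine.
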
%%%%%%%%%%%%%%%%%%%%%%%%%%%%%%%%%%%%%%%%

\begin{Prop} %%%%%%%%%%%%%%%%%%%%%%%%%%%%%%%%%%%%%
\label{1stDer-2}
Under the assumption in Proposition \ref{1stDer},
we have
\begin{equation*}
\nabla_{ m^{\prime} }^{ g(t^{\prime}) }
L_{0}^{ t^{\prime}, t^{\prime\prime} }
( \cdot , m^{\prime\prime} )
=
- \dot{\gamma} ( t^{\prime} )
\quad
\text{and}
\quad
\nabla_{ m^{\prime\prime} }^{ g(t^{\prime\prime}) }
L_{0}^{ t^{\prime}, t^{\prime\prime} }
( m^{\prime}, \cdot )
=
\dot{\gamma} ( t^{\prime\prime} ).
\end{equation*}
In particular, by combining with Proposition \ref{1stDer},
\begin{equation*}
\left\{\begin{array}{l}
\displaystyle
\frac{ \partial L_{0}^{\T , \TT} }{ \partial \T } ( \M , \MM )
=
\frac{1}{2} \Big\{
| \dot{\gamma} ( \T ) |_{ g(\T) }^{2} - R_{g(\T)} (\M)
\Big\}, \vspace{2mm} \\
%%%
\displaystyle
\frac{ \partial L_{0}^{\T , \TT} }{ \partial \TT } ( \M , \MM )
=
-\frac{1}{2} \Big\{
| \dot{\gamma} ( \TT ) |_{ g(\TT) }^{2} - R_{g(\TT)} (\MM)
\Big\}.
\end{array} \right.
\end{equation*}
\end{Prop}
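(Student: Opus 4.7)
The plan is to deduce both statements from the first variation formula (Proposition \ref{1stVar}) by an envelope-theorem-style argument, then plug the gradient identities into Proposition \ref{1stDer} to obtain the time-derivative formulas.

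First I would prove $\nabla_{m'}^{g(t')} L_0^{t',t''}(\cdot,m'') = -\dot\gamma(t')$. Pick any tangent vector $v \in T_{m'}M$ and choose a smooth curve $c:(-\delta,\delta)\to M$ with $c(0)=m'$ and $\dot c(0)=v$. Under the assumption that $L_0$ is smooth near $(t',m';t'',m'')$ and that the minimizer $\gamma$ is unique, the minimizer $\gamma_s$ joining $(t',c(s))$ to $(t'',m'')$ depends smoothly on $s$ (this is where the smoothness hypothesis is essential; it lets us invoke an implicit function argument on the $\mathcal{L}_0$-exponential discussed in section \ref{L0Geom}), with $\gamma_0 = \gamma$. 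Then $(s,t)\mapsto \gamma_s(t)$ is a smooth variation of $\gamma$ whose variational vector field $V$ satisfies $V(t')=v$ and $V(t'')=0$, and moreover $L_0^{t',t''}(c(s),m'') = \mathcal{L}_0(\gamma_s)$. Differentiating at $s=0$ and applying Proposition \ref{1stVar}, the integral term vanishes because $\gamma$ is an $\mathcal{L}_0$-geodesic, leaving
\[
\langle v,\nabla_{m'}^{g(t')} L_0^{t',t''}(\cdot,m'')\rangle_{g(t')}
= (\delta_V \mathcal{L}_0)(\gamma)
= -\langle v,\dot\gamma(t')\rangle_{g(t')}.
\]
Since $v$ is arbitrary, the first identity follows. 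The second identity is proved identically, this time taking a variation supported at the endpoint $t''$ so that $V(t')=0$ and $V(t'')=w$; the boundary term then yields $\langle w,\dot\gamma(t'')\rangle_{g(t'')}$.

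Once these gradient formulas are in hand, the formulas for $\partial L_0^{t',t''}/\partial t'$ and $\partial L_0^{t',t''}/\partial t''$ follow by direct substitution. For instance, the directional derivative term in the first identity of Proposition \ref{1stDer} becomes
\[
\nabla_{\dot\gamma(t')}^{g(t')} L_0^{t',t''}(\cdot,m'')
= \langle \dot\gamma(t'),-\dot\gamma(t')\rangle_{g(t')}
= -|\dot\gamma(t')|_{g(t')}^{2},
\]
and combining this with the $-\tfrac12\{|\dot\gamma(t')|^2+R_{g(t')}(m')\}$ term gives $\tfrac12\{|\dot\gamma(t')|^2-R_{g(t')}(m')\}$. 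The computation at $t''$ is symmetric.

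The one non-routine point is justifying that the minimizer $\gamma_s$ from $(t',c(s))$ to $(t'',m'')$ exists, is unique, and depends smoothly on $s$ for small $|s|$, since that is what legitimizes differentiating $L_0^{t',t''}(c(s),m'')$ through the minimization. Under the hypothesis that $L_0$ is smooth at the base point and that the minimizing curve at $s=0$ is unique, this is a standard consequence of the $\mathcal{L}_0$-exponential map being a local diffeomorphism away from the $\mathcal{L}_0$-cut locus, so I would simply cite the corresponding results from section \ref{L0Geom}; the rest of the proof is formal.
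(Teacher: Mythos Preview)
Your argument is correct and is exactly the standard route the paper has in mind; the paper in fact omits this proof entirely, declaring it routine and referring to \cite[sections 17--19]{KL}. One minor simplification you might note: you do not actually need the smooth dependence of the minimizers $\gamma_s$ on $s$. Take any smooth variation $f(s,t)$ of $\gamma$ with $f(s,t')=c(s)$ and $f(s,t'')=m''$; then $s\mapsto \mathcal{L}_0(f(s,\cdot)) - L_0^{t',t''}(c(s),m'')$ is nonnegative and vanishes at $s=0$, so its derivative at $0$ is zero, and since $L_0$ is assumed smooth this already gives $\langle v,\nabla_{m'}^{g(t')}L_0\rangle = (\delta_V\mathcal{L}_0)(\gamma)$. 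This sidesteps the implicit-function step entirely, though your version is equally valid given the hypotheses.
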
 %%%%%%%%%%%%%%%%%%%%%%%%%%%%%%%%%%%%%%%

We denote the curvature tensor with respect to $g(t)$ by $\mathcal{R}_{g(t)}$. 
It appears in the following second variation formula for $\mathcal{L}_0$. 
For a piecewise smooth curve $\gamma : [ \T, \TT ] \to M$ and 
two vector fields $V$, $W$ along $\gamma$, we define the $\mathcal{L}_0$-index form 
$\mathcal{L}_0 I_\gamma ( V, W )$ as follows: 
\begin{align*}
\mathcal{L}_0 I_\gamma ( V , W ) 
: = 
\int_{\T}^{\TT} & 
\Big\{ 
  \langle 
    \nabla^{g(t)}_{\dot{\gamma} (t)} V(t) , 
    \nabla^{g(t)}_{\dot{\gamma} (t)} W(t) 
  \rangle_{g(t)} 
  + 
  \langle
    \mathcal{R}_{g(t)} ( V (t), \dot{\gamma} (t) ) W (t), 
    \dot{\gamma} (t)
  \rangle
\\
& +
  \frac12 \Hess R_{g(t)} ( V(t), W(t) )
  + 
  ( \nabla^{g(t)}_V \Ric_{g(t)} ) ( \dot{\gamma}(t) , W(t) ) 
\\ 
& + 
  ( \nabla^{g(t)}_W \Ric_{g(t)} ) ( \dot{\gamma}(t) , V(t) ) 
  - 
  ( \nabla^{g(t)}_{\dot{\gamma}(t)} \Ric_{g(t)} ( V(t) , W (t) )  
\Big\} \D t . 
\end{align*}
By definition, $\mathcal{L}_0 I_\gamma (V,W)$ is symmetric in $V$ and $W$.

\begin{Prop} %%%%%%%%%%%%%%%%%%%%%%%%%%%%%%%%%%%%%
\label{2ndVar} %@@@@@@@@@@@@@@@@@@@@@@@@@@@@@@@@@@ LABEL: 2ndVar
For any smooth variation
(not being necessarily proper)
of an $\mathcal{L}_{0}$-geodesic
$\gamma : [t^{\prime}, t^{\prime\prime}] \to M$
with a variational vector field $V$,
we have
\begin{equation}
\label{2ndVarform} %@@@@@@@@@@@@@@@@@@@@@@@@@@@@@@ LABEL: 2ndVarform
\begin{split}
( \delta_{V} \delta_{V} \mathcal{L}_{0} ) ( \gamma )
&=
\left.
\big\langle
	\nabla_{V(t)}^{g(t)} V(t),
	\dot{\gamma} (t)
\big\rangle_{ g(t) }
\right\vert_{ t=t^{\prime} }^{ t=t^{\prime\prime} } 
+ \mathcal{L}_0 I_\gamma ( V , V ). 
\end{split}
\end{equation}
\end{Prop}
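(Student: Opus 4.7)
The plan is to compute $\tfrac{d^2}{du^2}\big|_{u=0}\mathcal{L}_{0}(\alpha(u,\cdot))$ directly for a smooth variation $\alpha:(-\varepsilon,\varepsilon)\times[t',t'']\to M$ of $\gamma$ with $\partial_u\alpha(0,t)=V(t)$, and to match the result term by term with \eqref{2ndVarform}. I would split $\mathcal{L}_{0}$ into its kinetic piece $\tfrac12\int_{t'}^{t''}|\dot\alpha(u,t)|^2_{g(t)}\,\D t$ and its potential piece $\tfrac12\int_{t'}^{t''}R_{g(t)}(\alpha(u,t))\,\D t$, differentiate each twice in $u$ at $u=0$, and combine at the end.

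For the kinetic piece I would run the standard Riemannian second-variation calculation at each fixed $t$, exploiting the torsion-freeness and metric compatibility of $\nabla^{g(t)}$ to produce $|\nabla^{g(t)}_{\dot\gamma}V|^2_{g(t)}$ and the curvature contribution $\langle\mathcal{R}_{g(t)}(V,\dot\gamma)V,\dot\gamma\rangle_{g(t)}$ inside the integral, together with a left-over term of the form $\int\langle\nabla^{g(t)}_{\dot\gamma}\nabla^{g(t)}_{V}V,\dot\gamma\rangle_{g(t)}\,\D t$ that I would integrate by parts in $t$. For the potential piece I would expand $\partial_u^2 R_{g(t)}(\alpha(u,t))\big|_{u=0}=\Hess R_{g(t)}(V,V)+\langle\nabla^{g(t)}R_{g(t)},\nabla^{g(t)}_{V}V\rangle_{g(t)}$ from the definition of the Hessian. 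The two contributions involving $\nabla^{g(t)}_{V}V$ produced above should then assemble, after one integration by parts, into the boundary expression $\langle\nabla^{g(t)}_{V}V,\dot\gamma\rangle_{g(t)}\big|_{t'}^{t''}$ plus an interior pairing against the field $\mathscr{G}_t(\gamma)$ from Proposition~\ref{1stVar}, which vanishes because $\gamma$ satisfies the $\mathcal{L}_0$-geodesic equation \eqref{L0geod}. This is how the boundary term of \eqref{2ndVarform} arises and how all second-order-in-$V$ terms become the ``honest'' index-form terms.

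The genuine novelty compared with the fixed-metric case -- and the main obstacle -- lies in the appearance of the three Ricci-derivative terms $(\nabla^{g(t)}_{V}\Ric_{g(t)})(\dot\gamma,V)+(\nabla^{g(t)}_{V}\Ric_{g(t)})(\dot\gamma,V)-(\nabla^{g(t)}_{\dot\gamma}\Ric_{g(t)})(V,V)$ in $\mathcal{L}_0 I_\gamma(V,V)$. These are generated by the time-dependence of $g(t)$: integrating by parts in $t$ brings in $\partial_t g(t)=-2\Ric_{g(t)}$ via the Ricci flow equation, while commuting $\partial_t$ past $\nabla^{g(t)}$ forces one to use the well-known formula for the $t$-derivative of the Christoffel symbols of a Ricci flow (schematically $\partial_t\Gamma\sim\nabla\Ric$), which is precisely where $\nabla^{g(t)}\Ric_{g(t)}$ enters. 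Careful bookkeeping of these curvature contributions, and verifying that they reassemble into exactly the symmetric combination built into $\mathcal{L}_{0}I_{\gamma}(V,V)$ upon specializing $W=V$, is the only non-routine step. Once this is done the identification with \eqref{2ndVarform} is immediate, and polarization in $(V,W)$ (not needed here since only $\delta_V\delta_V\mathcal{L}_0$ is claimed) would recover the full bilinear form. As the authors indicate, the underlying computation is in the spirit of \cite[sections~17--19]{KL}.
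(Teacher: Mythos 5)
Your plan is correct and is precisely the routine direct computation that the paper omits (deferring to \cite[sections 17--19]{KL}): split $\mathcal{L}_0$ into kinetic and potential parts, run the second-variation argument with the $t$-dependent connection, and collect the $\nabla^{g(t)}_{V}V$ contributions into the boundary term plus the pairing $\tfrac12\int\langle\mathscr{G}_t(\gamma),\nabla^{g(t)}_VV\rangle_{g(t)}\,\D t$, which vanishes by \eqref{L0geod}. One bookkeeping clarification: the three $\nabla\Ric$ terms of $\mathcal{L}_0I_\gamma(V,V)$ arise \emph{entirely} from the commutator correction $\nabla_u\nabla_t-\nabla_t\nabla_u=\mathcal{R}(U,T)\cdot-\dot{\Gamma}(\cdot,U)$ with $\dot{\Gamma}^k_{ij}=-g^{kl}(\nabla_i\Ric_{jl}+\nabla_j\Ric_{il}-\nabla_l\Ric_{ij})$, contracted with $V\otimes V$ and paired with $\dot\gamma$, whereas the $\partial_tg=-2\Ric$ term generated by the integration by parts contributes only $2\Ric_{g(t)}(\nabla^{g(t)}_VV,\dot\gamma)$, which is absorbed into the geodesic-equation pairing rather than into the index form.
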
%%%%%%%%%%%%%%%%%%%%%%%%%%%%%%%%%%%%%%%%

\begin{Rm} %%%%%%%%%%%%%%%%%%%%%%%%%%%%%%%%%%%%%%%
In (\ref{2ndVarform}), the second term
on the right hand side is independent of 
the choice of a variation of $\gamma$ which realizes 
the variational vector field $V$ 
as its infinitesimal variation. 
On the other hand, 
since $\gamma$ is $\mathcal{L}_{0}$-geodesic, 
the first term can be written as follows:
$$
\left.
\big\langle
	\nabla_{V(t)}^{g(t)} V(t),
	\dot{\gamma} (t)
\big\rangle_{ g(t) }
\right\vert_{ t=t^{\prime} }^{ t=t^{\prime\prime} }
=
( \delta_{ \nabla_{V}V } \mathcal{L}_{0} )
( \gamma ).
$$
\end{Rm} %%%%%%%%%%%%%%%%%%%%%%%%%%%%%%%%%%%%%%%%%

The next formula is derived from 
Proposition \ref{2ndVar}.

\begin{Prop} \label{Alt2Var}%%%%%%%%%%%%%%%%%%%%%%
Keeping the notations in Proposition \ref{2ndVar},
we have an alternative form of the second variational formula:
\begin{align*}
& ( \delta_{V} \delta_{V} \mathcal{L}_{0} ) ( \gamma ) \\
&=
\big\langle
	\nabla_{V(t)}^{g(t)} V(t),
	\dot{\gamma}(t)
\big\rangle \Big\vert_{t=\T}^{t=\TT}
+ \Ric_{g(t)} ( V(t), V(t) ) \Big\vert_{t=\T}^{t=\TT} \\
&\hspace{5mm}
+ \frac{1}{2} \int_{\T}^{\TT} \hspace{-3mm}
\Big\{
\Hess_{g(t)} R_{g(t)} (V(t),V(t))
+ 2 \Big\langle \mathcal{R}_{g(t)} ( V(t), \dot{\gamma}(t) ) V(t), \dot{\gamma}(t) \Big\rangle_{g(t)} \\
&\hspace{10mm}
- 2 \frac{\D \Ric_{g(t)}}{\D t} ( V(t),V(t) )
+ 4 \Big[
		\Big( \nabla_{V(t)}^{g(t)} \Ric_{g(t)} \Big) ( V(t),\dot{\gamma}(t) )
		- \Big( \nabla_{\dot{\gamma}(t)}^{g(t)} \Ric_{g(t)} \Big) (V(t),V(t))
	\Big] \\
&\hspace{15mm}
+ 2 \Big\vert
		\nabla_{\dot{\gamma}(t)}^{g(t)} V(t) - \Ric_{g(t)} (V(t),\cdot )
	\Big\vert_{g(t)}^{2}
- 2 |\Ric_{g(t)}(V(t), \cdot )|_{g(t)}^{2}
\Big\} \D t.
\end{align*}
\end{Prop}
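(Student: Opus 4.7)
The plan is to reduce the identity to Proposition~\ref{2ndVar} plus the fundamental theorem of calculus applied to the map $t\mapsto \Ric_{g(t)}(V(t),V(t))$. Since Proposition~\ref{2ndVar} already supplies the boundary term $\langle\nabla_{V}^{g(t)}V,\dot\gamma\rangle|_{t'}^{t''}$, it suffices to show that
\[
\mathcal{L}_{0}I_{\gamma}(V,V)
=\Ric_{g(t)}(V(t),V(t))\big|_{t=\T}^{t=\TT}
+\tfrac{1}{2}\!\int_{\T}^{\TT}\!\big\{\cdots\big\}\D t,
\]
where $\{\cdots\}$ denotes the integrand on the right-hand side of the proposition.

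First, I would expand $\mathcal{L}_{0}I_{\gamma}(V,V)$ by specialising the defining formula of the index form to $W=V$. By the symmetry of $\Ric_{g(t)}$ and of the Hessian of $R_{g(t)}$, the two $(\nabla\Ric)(\dot\gamma,\cdot)$ terms coalesce into $2(\nabla^{g(t)}_{V}\Ric_{g(t)})(V,\dot\gamma)$, yielding
\[
\mathcal{L}_{0}I_{\gamma}(V,V)
=\int_{\T}^{\TT}\!\Big\{|\nabla^{g(t)}_{\dot\gamma}V|^{2}
+\langle\mathcal{R}_{g(t)}(V,\dot\gamma)V,\dot\gamma\rangle
+\tfrac12\Hess R_{g(t)}(V,V)
+2(\nabla^{g(t)}_{V}\Ric_{g(t)})(V,\dot\gamma)
-(\nabla^{g(t)}_{\dot\gamma}\Ric_{g(t)})(V,V)\Big\}\D t.
\]
Next, I would use the trivial algebraic expansion
\[
|\nabla^{g(t)}_{\dot\gamma}V|^{2}
=\big|\nabla^{g(t)}_{\dot\gamma}V-\Ric_{g(t)}(V,\cdot)\big|_{g(t)}^{2}
-|\Ric_{g(t)}(V,\cdot)|_{g(t)}^{2}
+2\langle\nabla^{g(t)}_{\dot\gamma}V,\Ric_{g(t)}(V,\cdot)\rangle_{g(t)},
\]
so that the two quadratic terms appearing on the right-hand side of the claim are already matched, at the price of introducing the cross term $2\langle\nabla^{g(t)}_{\dot\gamma}V,\Ric_{g(t)}(V,\cdot)\rangle_{g(t)} = 2\Ric_{g(t)}(V,\nabla^{g(t)}_{\dot\gamma}V)$.

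The remaining task is to absorb this cross term into the boundary contribution $\Ric_{g(t)}(V,V)|_{\T}^{\TT}$ and the $\D\Ric/\D t$ integrand. For this I would differentiate $t\mapsto \Ric_{g(t)}(V(t),V(t))$ along $\gamma$, splitting the variation into a tensorial time derivative (at frozen vectors) and a covariant derivative along $\dot\gamma$:
\[
\frac{\D}{\D t}\Ric_{g(t)}(V(t),V(t))
=\frac{\D\Ric_{g(t)}}{\D t}(V,V)
+(\nabla^{g(t)}_{\dot\gamma}\Ric_{g(t)})(V,V)
+2\Ric_{g(t)}(V,\nabla^{g(t)}_{\dot\gamma}V).
\]
Integrating this identity from $\T$ to $\TT$ expresses $\int 2\Ric(V,\nabla_{\dot\gamma}V)\,\D t$ in terms of $\Ric(V,V)|_{\T}^{\TT}$ minus the two remaining integrals. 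Substituting back into the expanded index form and regrouping (using once more the symmetry $(\nabla_V\Ric)(\dot\gamma,V)=(\nabla_V\Ric)(V,\dot\gamma)$ to match the sign pattern $+2[(\nabla_V\Ric)(V,\dot\gamma)-(\nabla_{\dot\gamma}\Ric)(V,V)]$) yields precisely the claimed expression.

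There is no genuine obstacle here, only bookkeeping: the only delicate point is making sure that $\frac{\D\Ric_{g(t)}}{\D t}(V,V)$ is consistently interpreted as the partial $t$-derivative of $\Ric_{g(t)}$ viewed as a $(0,2)$-tensor evaluated on $V(t)$ with the $V$-dependence frozen, so that the chain rule on $t\mapsto\Ric_{g(t)}(V(t),V(t))$ produces exactly the three terms above without any spurious metric-derivative contribution. Once this convention is fixed, the whole derivation is a routine rearrangement of integrands.
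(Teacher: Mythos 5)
Your proposal is correct and follows exactly the route the paper intends (the paper omits this proof as routine, stating only that the formula "is derived from Proposition \ref{2ndVar}"): specialize the index form to $W=V$, complete the square in $\nabla_{\dot\gamma}V-\Ric(V,\cdot)$, and trade the resulting cross term $2\Ric(V,\nabla_{\dot\gamma}V)$ for the boundary term $\Ric(V,V)\vert_{\T}^{\TT}$ via the chain rule for $t\mapsto\Ric_{g(t)}(V(t),V(t))$. The bookkeeping, including the cancellation of $\frac{\D\Ric}{\D t}(V,V)$ and the sign on $(\nabla_{\dot\gamma}\Ric)(V,V)$, checks out.
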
%%%%%%%%%%%%%%%%%%%%%%%%%%%%%%%%%%%%%%%%

To deduce an estimate of
(a contraction of)
the Hessian for $L_{0}^{ t^{\prime}, t^{\prime\prime} }$,
we need a testing vector field.
For this, we introduce the notion of space-time parallel transport.
This notion will be used also to construct
a coupling of two Brownian motions in subsequent sections.

%%%%%%%%%%%%%%%%%%%%%%%%%%%%%%%%%%%%%%%%%%%%%%%%%%
\begin{Def}[Space-time parallel transport along an $\mathcal{L}_{0}$-minimizing curve]
\label{STP} %@@@@@@@@@@@@@@@@@@@@@@@@@@@@@@@@@@@@@ LABEL: STP
Let $\M$, $\MM\in M$ and $\T < \TT$.
Let $\gamma$ be {\it a} $\mathcal{L}_{0}$-minimizing curve
joining $(t^{\prime}, m^{\prime})$ and $(t^{\prime\prime}, m^{\prime\prime})$.
We define the
{\it
space-time parallel transport
$\para_{\M ,\MM}^{\T ,\TT}:T_{\M}M \to T_{\MM}M$
along
$\gamma$
}
as
$$
\para_{\M ,\MM}^{\T ,\TT} (v) := V(\TT)
$$
by solving the linear differential equation
$$
\left\{\begin{array}{l}
\nabla_{ \dot{\gamma} (t) }^{g(t)} V(t) = \Ric_{g(t)} (V(t),\cdot ),
\quad \T \leq t \leq \TT , \\
V(\T ) = v.
\end{array}\right.
$$
One can check easily that $\para_{\M ,\MM}^{\T ,\TT}$ gives a linear isometry from
$(T_{\M}M, g(\T))$ to $(T_{\MM}M, g(\TT))$.
Note that the space-time parallel transport can be defined as an isometry 
for more general time-dependent metrics; see \cite[Remark~5]{KP2}.
\end{Def}%%%%%%%%%%%%%%%%%%%%%%%%%%%%%%%%%%%%%%%%%

The main result in this subsection is the following.

\begin{Prop} \label{NonPos} %%%%%%%%%%%%%%%%%%%%%% LABEL: NonPos
Let $u_{1}^{\prime}, \cdots , u_{d}^{\prime}$
be an orthonormal basis
of $(T_{m^{\prime}}M, g(t^{\prime}))$.
Under the assumption in Proposition \ref{1stDer},
Then it holds that
\begin{align*}
&
\sum_{i=1}^{d}
\big[
	\Hess_{ g( t^{\prime} ) \oplus g( t^{\prime\prime} ) }
	L_{0}^{ t^{\prime} , t^{\prime\prime} }
\big]
\big(
	u_{i}^{\prime}
	\oplus
	\para_{ m^{\prime}, m^{\prime\prime} }^{ t^{\prime}, t^{\prime\prime} }
	u_{i}^{\prime} ,
	u_{i}^{\prime}
	\oplus
	\para_{ m^{\prime}, m^{\prime\prime} }^{ t^{\prime}, t^{\prime\prime} }
	u_{i}^{\prime}
\big) \\
&\hspace{30mm}
\leq
\Big\{
\frac{\partial L_{0}^{t' , t''}}{\partial t'}
+ \frac{\partial L_{0}^{t' , t''}}{\partial t''}
\Big\} 
( m^{\prime} , m^{\prime\prime} ) .
\end{align*}
\end{Prop}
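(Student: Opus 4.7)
Let $\gamma:[\T,\TT]\to M$ be the unique $\mathcal{L}_{0}$-minimizer from $(\T,\M)$ to $(\TT,\MM)$ whose existence is forced by the hypothesis. For each $i=1,\dots,d$, I would let $V_i$ be the space-time parallel transport of $u_i'$ along $\gamma$, so $V_i(\T)=u_i'$, $V_i(\TT)=\para_{\M,\MM}^{\T,\TT}u_i'$, and $\nabla^{g(t)}_{\dot\gamma}V_i=\Ric_{g(t)}(V_i,\cdot)^{\sharp}$. Pick a smooth variation $\gamma^{(i)}_s$ of $\gamma$ realizing $V_i$ as variational field whose endpoint curves $s\mapsto\gamma^{(i)}_s(\T)$ and $s\mapsto\gamma^{(i)}_s(\TT)$ are $g(\T)$- and $g(\TT)$-geodesics from $\M$ and $\MM$ with initial velocities $u_i'$ and $\para u_i'$, respectively. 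This forces $\nabla_{V_i}V_i=0$ at both endpoints.

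\textbf{Hessian via second variation.} Define $f_i(s):=L_0^{\T,\TT}(\gamma^{(i)}_s(\T),\gamma^{(i)}_s(\TT))$ and $g_i(s):=\mathcal{L}_0(\gamma^{(i)}_s)$. Since $L_0$ is an infimum, $f_i\le g_i$ locally, with equality at $s=0$ by minimality of $\gamma$; hence $f_i''(0)\le g_i''(0)$. The endpoint-geodesic choice makes $f_i''(0)$ equal to $[\Hess_{g(\T)\oplus g(\TT)}L_0^{\T,\TT}](u_i'\oplus\para u_i',u_i'\oplus\para u_i')$, while $g_i''(0)=(\delta_{V_i}\delta_{V_i}\mathcal{L}_0)(\gamma)$ is computed by Proposition~\ref{Alt2Var}. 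With this $V_i$ the boundary $\langle\nabla_{V_i}V_i,\dot\gamma\rangle|^{\TT}_{\T}$ term vanishes, and the positive term $2|\nabla_{\dot\gamma}V_i-\Ric(V_i,\cdot)|^2$ in the integrand vanishes by the parallel transport equation. What survives is the boundary term $\Ric(V_i,V_i)|^{\TT}_{\T}$ plus a curvature integral.

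\textbf{Summation and cancellation.} Since space-time parallel transport is a $g(t)$-isometry, $\{V_i(t)\}_{i=1}^{d}$ is a $g(t)$-orthonormal frame for every $t$. Summing over $i$ converts traces to global quantities: $\sum_i\Ric(V_i,V_i)=R$, so the boundary sum is $R(\MM)-R(\M)$; in the integrand one has $\sum_i\Hess R(V_i,V_i)=\Delta R$, $\sum_i\langle\mathcal{R}(V_i,\dot\gamma)V_i,\dot\gamma\rangle=-\Ric(\dot\gamma,\dot\gamma)$, the contracted Bianchi identity $\sum_i(\nabla_{V_i}\Ric)(V_i,\dot\gamma)=\tfrac12\dot\gamma R$, $\sum_i(\nabla_{\dot\gamma}\Ric)(V_i,V_i)=\dot\gamma R$, $\sum_i\tfrac{\D\Ric}{\D t}(V_i,V_i)=\partial_tR-2|\Ric|^2$ (the deficit $-2|\Ric|^2$ coming from the evolution of the dual metric under \eqref{RicFlow}), and $\sum_i|\Ric(V_i,\cdot)|^2=|\Ric|^2$. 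Substituting the Ricci flow identity $\partial_tR=\Delta R+2|\Ric|^2$ cancels every curvature term and reduces the upper bound to $R(\MM)-R(\M)-\int_{\T}^{\TT}(2\Ric(\dot\gamma,\dot\gamma)+\dot\gamma R)\,\D t$. Finally, differentiating $|\dot\gamma(t)|_{g(t)}^2$ using the $\mathcal{L}_0$-geodesic equation~\eqref{L0geod} under the Ricci flow yields $\tfrac{d}{dt}|\dot\gamma|^2=2\Ric(\dot\gamma,\dot\gamma)+\dot\gamma R$, so the integral equals $|\dot\gamma(\T)|^2-|\dot\gamma(\TT)|^2$. Proposition~\ref{1stDer-2} identifies the total with $\partial_{\T}L_0^{\T,\TT}+\partial_{\TT}L_0^{\T,\TT}$, which is what is claimed.

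\textbf{Main obstacle.} The real work is the bookkeeping above: balancing the Ricci-flow evolutions of $R$ and $\Ric$ against the contracted Bianchi identity and the $\mathcal{L}_0$-geodesic equation so that all curvature contributions cancel exactly. The upstream choice that $V_i$ solves the space-time parallel transport equation is precisely what kills the only term—$2|\nabla_{\dot\gamma}V_i-\Ric(V_i,\cdot)|^2$—that prevents this cancellation, and this is why the space-time parallel transport is the right object to test the Hessian against.
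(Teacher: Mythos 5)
Your proposal follows the paper's proof step for step: the same test fields (the space-time parallel transports $V_i$ of the $u_i'$), the same variations with $g(\T)$- and $g(\TT)$-geodesic endpoint curves so that $f_i''(0)$ computes the Hessian and is dominated by $(\delta_{V_i}\delta_{V_i}\mathcal{L}_0)(\gamma)$, the same appeal to Proposition \ref{Alt2Var} with the key observation that the parallel-transport equation annihilates $2|\nabla_{\dot\gamma}V_i-\Ric_{g(t)}(V_i,\cdot)|^2$, and the same trace identities from Proposition \ref{formula}. All of these ideas are correct and are exactly the paper's.

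The final bookkeeping, however, is off by a factor of $2$, and as written the conclusion does not follow. The integral in Proposition \ref{Alt2Var} carries a prefactor $\tfrac12$; keeping it, the summed second variation equals
\begin{equation*}
R\big|_{\T}^{\TT}
+\frac12\int_{\T}^{\TT}\hspace{-2mm}\big\{-\Delta_{g(t)} R_{g(t)}-2|\Ric_{g(t)}|^2_{g(t)}-2\Ric_{g(t)}(\dot\gamma,\dot\gamma)-2\,\dot\gamma R_{g(t)}\big\}\,\D t
=\frac12\int_{\T}^{\TT}\hspace{-2mm}\big\{\Delta_{g(t)} R_{g(t)}+2|\Ric_{g(t)}|^2_{g(t)}-2\Ric_{g(t)}(\dot\gamma,\dot\gamma)\big\}\,\D t,
\end{equation*}
which is $\partial_{\T}L_0+\partial_{\TT}L_0$ written as an integral. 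Your claimed intermediate form $R(\MM)-R(\M)-\int_{\T}^{\TT}(2\Ric(\dot\gamma,\dot\gamma)+\dot\gamma R)\,\D t$ equals $\int_{\T}^{\TT}\{\Delta R+2|\Ric|^2-2\Ric(\dot\gamma,\dot\gamma)\}\,\D t$, i.e.\ \emph{twice} the correct value, and accordingly your final expression $R(\MM)-R(\M)+|\dot\gamma(\T)|^2-|\dot\gamma(\TT)|^2$ is $2\{\partial_{\T}L_0+\partial_{\TT}L_0\}$ rather than $\partial_{\T}L_0+\partial_{\TT}L_0$ (Proposition \ref{1stDer-2} carries factors $\tfrac12$ that you have dropped). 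Since $\partial_{\T}L_0+\partial_{\TT}L_0$ has no definite sign, a bound by twice this quantity does not yield the proposition. The repair is purely arithmetic --- restore the $\tfrac12$ --- and requires no new idea.
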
%%%%%%%%%%%%%%%%%%%%%%%%%%%%%%%%%%%%%%%%
 
For the proof, we gather formulae for geometric quantities along the Ricci flow.
For the proof, see \cite[equation (2.1.9) and subsection 2.5]{To1}.
\begin{Prop} \label{formula} %%%%%%%%%%%%%%%%%%%%% LABEL: formula
Along the Ricci flow $\displaystyle \frac{\D g}{\D t}(t) = - 2 \Ric_{g(t)}$, one has
\begin{itemize}
\item[(i)] $\displaystyle
\frac{\partial R_{g(t)}}{\partial t} = \Delta_{g(t)} R_{g(t)} + 2|\Ric_{g(t)}|_{g(t)}^{2},
$

\vspace{2mm}
\item[(ii)] $\displaystyle
\mathrm{tr} \frac{ \D \Ric_{g(t)} }{\D t} = \Delta_{g(t)} R_{g(t)},
$

\vspace{2mm}
\item[(iii)] {\rm contracted Bianchi identity:}
$\displaystyle
\mathrm{tr} \Big( \nabla \Ric_{g(t)} \Big)
=
\frac{1}{2} \nabla^{g(t)} R_{g(t)}.
$
\end{itemize}
\end{Prop}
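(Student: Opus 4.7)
The plan is to handle (iii) first since it is a purely algebraic consequence of the second Bianchi identity, then use it as the key input to derive (ii), and finally deduce (i) from (ii) together with the variation of the inverse metric.

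For (iii), this is the classical contracted Bianchi identity: starting from the second Bianchi identity $\nabla_a R_{bcde} + \nabla_b R_{cade} + \nabla_c R_{abde} = 0$, contract with $g^{ac}g^{bd}$ (which commutes with $\nabla$ since $\nabla g = 0$) and use the standard symmetries of the Riemann tensor; two of the terms become $\nabla^c R_{ce}$ and the third becomes $-\nabla_e R$, yielding $2\nabla^c R_{ce} = \nabla_e R$, which is (iii).

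For (ii), I would invoke the Palatini identity: under a general variation $\partial_t g_{ij} = h_{ij}$, the Christoffel symbols (whose variation is a tensor even though they are not) evolve as
$$\partial_t \Gamma^k_{ij} = \tfrac12 g^{kl}\bigl(\nabla_i h_{jl} + \nabla_j h_{il} - \nabla_l h_{ij}\bigr),$$
and consequently $\partial_t R_{ij} = \nabla_k \partial_t \Gamma^k_{ij} - \nabla_i \partial_t \Gamma^k_{kj}$. Specializing to the Ricci flow, $h = -2\Ric_{g(t)}$, and contracting with $g^{ij}$ splits $\mathrm{tr}(\partial_t \Ric_{g(t)})$ into two pieces. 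Since $\nabla g = 0$, the first piece equals $\nabla_k \bigl(g^{ij}\partial_t \Gamma^k_{ij}\bigr)$; a direct expansion of $g^{ij}\partial_t \Gamma^k_{ij}$ produces three terms that cancel exactly via two applications of (iii), so this piece vanishes. For the second piece, the identity $\Gamma^k_{kj} = \partial_j \log\sqrt{\det g}$ gives $\partial_t \Gamma^k_{kj} = \tfrac12 \partial_j\bigl(g^{ab}\partial_t g_{ab}\bigr) = -\nabla_j R_{g(t)}$, and hence $-g^{ij}\nabla_i \partial_t \Gamma^k_{kj} = \Delta_{g(t)} R_{g(t)}$. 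Combining the two pieces proves (ii).

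For (i), differentiate $R_{g(t)} = g^{ij} R_{ij}$ in $t$. From $g^{ij}g_{jk} = \delta^i_k$ one obtains $\partial_t g^{ij} = -g^{ia}g^{jb}\partial_t g_{ab} = 2 R^{ij}$ under the Ricci flow, so $(\partial_t g^{ij}) R_{ij} = 2|\Ric_{g(t)}|_{g(t)}^2$. The remaining piece $g^{ij}\partial_t R_{ij}$ equals $\Delta_{g(t)} R_{g(t)}$ by (ii), and adding the two contributions yields (i). The main technical obstacle lies in (ii): one has to carry out the index bookkeeping carefully enough to see that the three traced-Palatini terms collapse precisely via the contracted Bianchi identity, and that the outer divergence of the resulting expression then vanishes identically. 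Once that computation is in hand, (iii) and (i) are short and essentially algebraic consequences.
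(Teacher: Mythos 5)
Your proof is correct: the contracted Bianchi identity, the Palatini-type variation formula $\partial_t R_{ij}=\nabla_k\partial_t\Gamma^k_{ij}-\nabla_i\partial_t\Gamma^k_{kj}$ specialized to $h=-2\Ric_{g(t)}$, and the variation $\partial_t g^{ij}=2R^{ij}$ assemble exactly as you describe, and all three identities follow. The paper gives no proof of its own (it only cites Topping's lecture notes), and your computation is precisely the standard argument found there, so nothing further is needed.
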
%%%%%%%%%%%%%%%%%%%%%%%%%%%%%%%%%%%%%%%%

%%%%%%%%%%%%%%%%%%%%%%%%%%%%%%%%%%%%%%%%%%%%%%%%%%
\begin{proof}[Proof of Proposition \ref{NonPos}]
Let $\gamma$ be an $\mathcal{L}_{0}$-minimizing curve
from $(t^{\prime},m^{\prime})$ to $(t^{\prime\prime},m^{\prime\prime})$.
By Proposition \ref{1stDer-2}, Proposition \ref{formula} and \eqref{L0geod}, 
we see
\begin{align*}
&
\Big\{
\frac{\partial L_{0}^{t' , t''}}{\partial t'}
+ \frac{\partial L_{0}^{t' , t''}}{\partial t''}
\Big\} 
( m^{\prime}, m^{\prime\prime} ) \\
&\hspace{5mm}=
\frac{1}{2}
\int_{ t^{\prime} }^{ t^{\prime\prime} } \hspace{-2mm} \Big\{
\Delta_{g(t)} R_{g(t)}
+ 2 |\Ric_{g(t)}|_{g(t)}^{2}
- 2 \Ric_{g(t)} ( \dot{\gamma}(t), \dot{\gamma}(t) )
\Big\} \D t .
\end{align*}

Next we compute and give an estimate for
(a contraction of)
the Hessian.
For each $i=1,2,\cdots ,d$,
we define a system of vector fields $(A_{i})_{i=1}^{d}$ along
$\gamma$ by
$$
A_{i} ( t )
:=
\para_{ m^{\prime}, \gamma (t) }^{ t^{\prime} , t }
u_{i}^{\prime}
\quad
\text{for $t^{\prime} \leq t \leq t^{\prime\prime}$}
$$
and we take a variation
$
f_{i} :
( -\varepsilon_{0} , \varepsilon_{0} )
\times
[ t^{\prime} , t^{\prime\prime} ]
\to
M
$
of $\gamma$
($\varepsilon_{0} >0$ being small enough)
such that
\begin{itemize}
\item[(a)] $f_{i}( 0 , \cdot ) = \gamma$,

\vspace{2mm}
\item[(b)] $f_{i}$ has $A_{i}$ as its variational field:
$$
A_{i} ( 0 , t )
=
A_{i} (t)
\quad
t^{\prime} \leq t \leq t^{\prime\prime}
$$
where
$\displaystyle
A_{i} ( \varepsilon , t )
:=
\frac{ \D f_{i} }{ \D \varepsilon } ( \varepsilon , t )
$
is the transversal vector field, and

\vspace{2mm}
\item[(c)] two transversal curves
$f_{i} ( \cdot , t^{\prime} )$,
$
f_{i} ( \cdot , t^{\prime\prime} ) :
( - \varepsilon_{0} , \varepsilon_{0} ) \to M
$
are $g( t^{\prime} )$-geodesic and $g( t^{\prime\prime} )$-geodesic respectively at $\varepsilon =0$:
$$
\nabla_{ A_{i} ( t^{\prime} ) }^{g( t^{\prime} )}
A_{ i } ( \cdot ,  t^{\prime} ) = 0
\quad
\text{and}
\quad
\nabla_{ A_{i} ( t^{\prime\prime} )}^{g( t^{\prime\prime} )}
A_{ i } ( \cdot , t^{\prime\prime} ) = 0 .
$$
\end{itemize}
We further set
$$
\ell_i (\varepsilon)
:=
L_{0}^{ t^{\prime} , t^{\prime\prime} }
\big(
	f_{i} ( \varepsilon , t^{\prime} ),
	f_{i} ( \varepsilon , t^{\prime\prime} )
\big)
\leq
\mathcal{L}_{0}
( f_{i} ( \varepsilon , \cdot ) ) =: \widehat{\ell}_i (\varepsilon).
$$
It is easy to see that
$
\ell_{i}^{\prime\prime}(0)
\leq
{ \widehat{\ell}_{i}\ }^{\hspace{-1.5mm}\prime\prime} \!\! (0)
$.
Since
$$
\nabla_{
	A_{i} ( t^{\prime} ) \oplus A_{i} ( t^{\prime\prime} )
}^{
	g( t^{\prime} ) \oplus g( t^{\prime\prime} )
}
A_{i} ( \cdot , t^{\prime} ) \oplus A_{i} ( \cdot , t^{\prime\prime} )
=
\big\{
\nabla_{
	A_{i} ( t^{\prime} )
}^{ g( t^{\prime} ) }
A_{i} ( \cdot , t^{\prime} )
\big\}
\oplus
\big\{
\nabla_{
	A_{i} ( t^{\prime\prime} )
}^{ g( t^{\prime\prime} ) }
A_{i} ( \cdot , t^{\prime\prime} )
\big\}
=
0, 
$$
we can compute the Hessian as
\begin{align*}
&
\big[
	\Hess_{ g( t^{\prime} ) \oplus g( t^{\prime\prime} ) }
	L_{0}^{ t^{\prime} , t^{\prime\prime} }
\big]
\big(
	u_{i}^{\prime}
	\oplus
	\para_{ m^{\prime} , m^{\prime\prime} }^{ t^{\prime}, t^{\prime\prime} }
	u_{i}^{\prime},
	u_{i}^{\prime}
	\oplus
	\para_{ m^{\prime} , m^{\prime\prime} }^{ t^{\prime}, t^{\prime\prime} }
	u_{i}^{\prime}
\big) \\
&=
\big[
	\Hess_{ g( t^{\prime} ) \oplus g( t^{\prime\prime} ) }
	L_{0}^{ t^{\prime} , t^{\prime\prime} }
\big]
\big(
	A_{i} ( t^{\prime} ) \oplus A_{i} ( t^{\prime\prime} ),
	A_{i} ( t^{\prime} ) \oplus A_{i} ( t^{\prime\prime} )
\big) \\
&=
A_{i} ( t^{\prime} ) \oplus A_{i} ( t^{\prime\prime} )
\big\{
	\big(
		A_{i} ( \cdot , t^{\prime} )
		\oplus
		A_{i} ( \cdot , t^{\prime\prime} )
	\big)
	L_{0}^{ t^{\prime} , t^{\prime\prime} }
\big\} \\
&=
\left.\frac{\D^{2}}{\D \varepsilon^{2}}\right\vert_{\varepsilon =0} \hspace{-5mm}
L_{0}^{ t^{\prime} , t^{\prime\prime} }
\big(
	f_{i} ( \varepsilon , t^{\prime} ),
	f_{i} ( \varepsilon , t^{\prime\prime} )
\big)
=
\ell^{\prime\prime}_{i} (0)
\leq
{ \widehat{\ell}_{i}\ }^{\hspace{-1.5mm}\prime\prime} \!\! (0)
=
( \delta_{A_{i}}\delta_{A_{i}} \mathcal{L}_{0} ) ( \gamma ).
\end{align*}
By the second variational formula 
(Proposition \ref{Alt2Var}), we have
\begin{align*}
&
( \delta_{A_{i}}\delta_{A_{i}} \mathcal{L}_{0} ) ( \gamma ) \\
&=
\Ric_{g(t)} ( A_{i} (t), A_{i} (t) )
\Big\vert_{ t = t^{\prime} }^{ t = t^{\prime\prime} } \\
&\hspace{5mm}
+ \frac{1}{2} \int_{ t^{\prime} }^{ t^{\prime\prime} } \hspace{-3mm}
\Big\{
	\Hess_{g(t)} R_{g(t)} ( A_i (t), A_i (t) ) \\
&\hspace{10mm}
	+
	2
	\Big\langle
		\mathcal{R}_{g(t)}
		( A_i (t), \dot{\gamma}(t) ) A_i (t), \dot{\gamma}(t)
	\Big\rangle_{g(\tau)}
	-
	2
	\frac{ \D \Ric_{g(t)} }{ \D t } ( A_i (t), A_i (t) ) \\
&\hspace{15mm}
	+
	4
	\Big[
		\Big( \nabla_{ A_i (t) }^{ g(t) } \Ric_{ g(t) } \Big)
		( A_i (t) , \dot{\gamma}(t) )
		-
		\Big( \nabla_{ \dot{\gamma}(t) }^{ g(t) } \Ric_{ g(t) } \Big)
		( A_i (t), A_i (t) )
	\Big] \\
&\hspace{10mm}
	+
	2
	\Big\vert
		\nabla_{ \dot{\gamma}(t) }^{ g(t) } A_i (t)
		-
		\Ric_{ g(t) } ( A_{i} (t), \cdot )
	\Big\vert_{ g(t) }^{2}
	-
	2
	| \Ric_{ g(t) } ( A_{i} (t), \cdot ) |_{ g(t) }^{2}
\Big\} \D t .
\end{align*}
Hence, by taking the sum over $i=1,2,\ldots ,d$
with formulae in Proposition \ref{formula},
we have
\begin{align*}
&
\sum_{i=1}^{d}
\big[
	\Hess_{ g( t^{\prime} ) \oplus g( t^{\prime\prime} )}
	L_{0}^{ t^{\prime} , t^{\prime\prime} }
\big]
\big(
	u_{i}^{\prime}
	\oplus
	\para_{ m^{\prime} , m^{\prime\prime} }^{ t^{\prime} , t^{\prime\prime} }
	u_{i}^{\prime} ,
	u_{i}^{\prime}
	\oplus
	\para_{ m^{\prime} , m^{\prime\prime} }^{ t^{\prime} , t^{\prime\prime} }
	u_{i}^{\prime}
\big) \\
&\leq
R_{g(t)} ( \gamma (t) )
\Big\vert_{ t=t^{\prime} }^{ t=t^{\prime\prime} } \\
&\hspace{5mm}
+
\frac{1}{2}
\int_{ t^{\prime} }^{ t^{\prime\prime} } \hspace{-3mm}
\Big\{
	\Delta_{g(t)} R_{g(t)}
	- 2 \Ric_{g(t)} ( \dot{\gamma}(t), \dot{\gamma}(t) )
	- 2 \Delta_{g(t)} R_{g(t)} \\
&\hspace{25mm}
+ 4
\Big[
	\frac{1}{2} \nabla_{ \dot{\gamma}(t) }^{g(t)} R_{g(t)}
	-
	\nabla_{ \dot{\gamma}(t) }^{g(t)} R_{g(t)}
\Big]
- 2 | \Ric_{g(t)} |_{g(t)}^{2}
\Big\} \D t \\
&=
\frac{1}{2}
\int_{ t^{\prime} }^{ t^{\prime\prime} } \hspace{-3mm}
\Big\{
	2 \Delta_{g(t)} R_{g(t)}
	+ 4 | \Ric_{g(t)} |_{g(t)}^{2}
	+ 2 \nabla_{ \dot{\gamma}(t) }^{g(t)} R_{g(t)}
\Big\} \D\tau \\
&\hspace{5mm}+
\frac{1}{2}
\int_{ t^{\prime} }^{ t^{\prime\prime} } \hspace{-3mm}
\Big\{
	- \Delta_{g(t)} R_{g(t)}
	- 2 \Ric_{g(t)} ( \dot{\gamma}(t), \dot{\gamma}(t) )
	- 2 \nabla_{ \dot{\gamma}(t) }^{g(t)} R_{g(t)}
	- 2 | \Ric_{g(t)} |_{g(t)}^{2}
\Big\} \D t \\
&=
\frac{1}{2}
\int_{ t^{\prime} }^{ t^{\prime\prime} } \hspace{-3mm}
\Big\{
	\Delta_{g(t)} R_{g(t)}
	+ 2 | \Ric_{g(t)} |_{g(t)}^{2}
	- 2 \Ric_{g(t)} ( \dot{\gamma}(t), \dot{\gamma}(t) )
\Big\} \D t ,
\end{align*}
which is equal to
$\displaystyle
\Big\{
\frac{\partial L_{0}^{t' , t''}}{\partial t'}
+
\frac{\partial L_{0}^{t' , t''}}{\partial t''}
\Big\} 
( m^{\prime}, m^{\prime\prime} )
$.
\end{proof}%%%%%%%%%%%%%%%%%%%%%%%%%%%%%%%%%%%%%%%

% \begin{Rm} \label{rem:smooth} %%%%%%%%%%%%%%%%%%%%%%%%%%%%%%%%%%%%%%%
% In subsection \ref{sec:L0cut}, 
% we will discuss on singularities of $L_0$ by introducing 
% the notion of $\mathcal{L}_0$-cut locus. 
% In terms of $\mathcal{L}_0$-cut locus, 
% the assumptions in
% Proposition \ref{1stDer}, \ref{1stDer-2} and \ref{NonPos} 
% can be expressed as ``$( \T, \M ; \TT, \MM )$ does not belong 
% to the $\mathcal{L}_{0}$-cut locus''. 
% \end{Rm} %%%%%%%%%%%%%%%%%%%%%%%%%%%%%%%%%%%%%%%%%

%%%%%%%%%%%%%%%%%%%%%%%%%%%%%%%%%%%%%%%%%%%%%
\subsection{Some estimates on relatively compact open subsets}
%%%%%%%%%%%%%%%%%%%%%%%%%%%%%%%%%%%%%%%%%%%%%

We begin with estimates which hold \emph{globally} under
the curvature assumption \eqref{CurAss}. 
Let $K_- > 0$ be a constant satisfying
$-K_- g(t) \leq \Ric_{g(t)}$ for all $t \in [ 0 , T ]$. 
Recall that $\dim M = d$ and our Ricci flow is defined on $[0,T]$. 
Given any metric $g$, we denote by $\rho_{g}$ the corresponding
Riemannian distance.

\begin{Prop} \label{Est0} %%%%%%%%%%%%%%%%%%%%%%%%
\quad \\
\begin{itemize}
\item[(i)] {\rm Comparison of metric $g(t)$ between two different times:}
$$
g(t) \leq \mathrm{e}^{ 2K_{-} (t-s) } g(s)
\quad
\text{for $0 \leq s \leq t \leq T$.}
$$

\item[(ii)] {\rm Comparison of distance $\rho_{g(t)}$ between two different times:}
$$
\rho_{g(t)}(x,y) \leq \mathrm{e}^{ K_{-}(t-s) } \rho_{g(s)} (x,y)
$$
for any $x$, $y\in M$ and $0 \leq s \leq t \leq T$.

\item[(iii)] {\rm Lower bound for $L_{0}$:}
For $0 \leq t^{\prime} < t^{\prime\prime} \leq T$,
$m^{\prime}$, $m^{\prime\prime} \in M$ 
and a piecewise $C^1$ curve $\gamma : [ \T , \TT ] \to M$ 
with $\gamma (\T) = \M$ and $\gamma (\TT) = \MM$,
\[
\mathcal{L}_0 (\gamma) 
\ge 
\frac12 \mathrm{e}^{-K_- (\TT - \T) } \int_{\T}^{\TT} | \dot{\gamma} (t) |_{g(\TT)}^2 \D t 
- \frac{d K_-}{2} ( \TT - \T ).
\]
In particular,
\begin{align*}
L_0^{\T, \TT} ( \M , \MM ) 
\ge 
\frac{1}{2} \mathrm{e}^{- K_- (\TT - \T)}
& 
\frac{ \rho_{g(\TT)} ( \M , \MM )^{2} }{ \TT - \T }
-
\frac{d K_-}{2} ( \TT - \T ) ,
\\
% In particular, 
\inf_{\substack{(\T,\M ; \TT,\MM ) \in ( [0,T] \times M )^{2} \\ t^{\prime} < t^{\prime\prime}}} 
& 
L_{0}^{\T , \TT} ( \M , \MM )
\geq -\frac{dK_- T}{2}.
\end{align*}
\end{itemize}
\end{Prop}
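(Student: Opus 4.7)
My plan is to derive all three parts from a single Gr\"onwall-type comparison on the time evolution of $g(t)$. For a fixed tangent vector $V \in T_x M$, the Ricci flow equation gives
\[
\frac{\D}{\D t}|V|_{g(t)}^2 = -2\Ric_{g(t)}(V,V) \leq 2K_-|V|_{g(t)}^2,
\]
so Gr\"onwall's lemma on $[s,t]$ yields $|V|_{g(t)}^2 \leq e^{2K_-(t-s)}|V|_{g(s)}^2$, which is exactly (i). Part (ii) then follows by evaluating this pointwise along the velocity of an arbitrary piecewise smooth curve $\gamma$ from $x$ to $y$: taking square roots gives $|\dot{\gamma}(\sigma)|_{g(t)} \leq e^{K_-(t-s)}|\dot{\gamma}(\sigma)|_{g(s)}$, so the $g(t)$-length of $\gamma$ is at most $e^{K_-(t-s)}$ times its $g(s)$-length, and taking the infimum over curves produces the claim.

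For part (iii), I would separately bound the two contributions to $\mathcal{L}_0(\gamma)$. Tracing the Ricci lower bound $-K_- g(t) \leq \Ric_{g(t)}$ yields the uniform scalar curvature bound $R_{g(t)} \geq -dK_-$, which handles the potential term and produces the $-\tfrac{dK_-}{2}(\TT-\T)$ summand. For the kinetic term, I would rewrite (i) in reverse as $|V|_{g(t)}^2 \geq e^{-2K_-(\TT-t)}|V|_{g(\TT)}^2$ (applied with $s=t$ and time $\TT$), and then use $e^{-2K_-(\TT-t)} \geq e^{-2K_-(\TT-\T)}$ uniformly in $t \in [\T,\TT]$ to pull out a constant multiple of $\int_\T^\TT |\dot{\gamma}(t)|_{g(\TT)}^2 \D t$. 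This gives a factor $e^{-2K_-(\TT-\T)}$; the weaker factor $e^{-K_-(\TT-\T)}$ in the statement follows a fortiori since $K_- > 0$.

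The displayed bound involving $\rho_{g(\TT)}(\M,\MM)$ is then immediate from Cauchy--Schwarz: since any piecewise $C^1$ curve $\gamma$ joining $\M$ to $\MM$ satisfies $\int_\T^\TT|\dot{\gamma}(t)|_{g(\TT)}\D t \geq \rho_{g(\TT)}(\M,\MM)$, Cauchy--Schwarz upgrades this to
\[
\int_\T^\TT |\dot{\gamma}(t)|_{g(\TT)}^2 \D t \geq \frac{\rho_{g(\TT)}(\M,\MM)^2}{\TT - \T},
\]
and inserting this into the previous step and taking the infimum over $\gamma$ yields the second claimed inequality. Finally, discarding the (nonnegative) kinetic term gives $L_0^{\T,\TT}(\M,\MM) \geq -\tfrac{dK_-}{2}(\TT-\T) \geq -\tfrac{dK_-T}{2}$, which is the global bound. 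There is no genuine technical obstacle here; the whole proposition is an exercise in bookkeeping built on the single Gr\"onwall estimate of part (i).
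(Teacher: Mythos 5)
Your argument is the same as the paper's: part (i) is Gr\"onwall applied to $\partial_t g = -2\Ric_{g(t)} \leq 2K_- g$, part (ii) is the induced length comparison, and part (iii) combines the traced bound $R_{g(t)} \geq -dK_-$ with the metric comparison and Cauchy--Schwarz. All of those steps are fine, and the final global bound obtained by discarding the kinetic term is correct.

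The one genuine slip is the claim that the factor $\mathrm{e}^{-K_-(\TT-\T)}$ in the statement ``follows a fortiori'' from the factor $\mathrm{e}^{-2K_-(\TT-\T)}$ you derived. The implication runs the other way: since $K_->0$ and $\TT>\T$, we have $\mathrm{e}^{-2K_-(\TT-\T)} < \mathrm{e}^{-K_-(\TT-\T)}$, so the bound with exponent $-K_-(\TT-\T)$ is the \emph{stronger} lower bound (the kinetic integrand is nonnegative) and is not a consequence of the one you proved. What your argument actually establishes is
\[
\mathcal{L}_0(\gamma) \geq \frac12\, \mathrm{e}^{-2K_-(\TT-\T)}\int_{\T}^{\TT}\vert\dot\gamma(t)\vert_{g(\TT)}^2\,\D t - \frac{dK_-}{2}(\TT-\T),
\]
and correspondingly $\mathrm{e}^{-2K_-(\TT-\T)}$ in front of $\rho_{g(\TT)}(\M,\MM)^2/(\TT-\T)$. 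To be fair, the paper's own proof makes exactly the same jump --- its chain of inequalities only justifies the exponent $-2K_-(\TT-\T)$, so the exponent in the printed statement appears to be a typo, and every later application (Lemma \ref{Min}, the integrability arguments) only uses the bound up to constants depending on $K_-$, $d$ and $T$. Still, you should either record the corrected exponent or find an argument for the stated one; you cannot pass from the smaller constant to the larger one ``a fortiori''.
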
%%%%%%%%%%%%%%%%%%%%%%%%%%%%%%%%%%%%%%%%

Although the proof is parallel to
the corresponding statements in $\mathcal{L}$-geometry 
(especially (i) and (ii) are irrelevant to $\mathcal{L}$ or $\mathcal{L}_0$), 
we give it for completeness. 

\begin{proof} %%%%%%%%%%%%%%%%%%%%%%%%%%%%%%%%%%%%
(i) Since
$\displaystyle
\frac{\partial g}{\partial t} = - 2\Ric_{g} \leq 2 K_{-} g
$,
apply Gronwall's lemma.

(ii) obviously follows from (i). 

(iii) Take a piecewise $C^1$-curve 
$
\gamma :[ t^{\prime} , t^{\prime\prime} ] \to M
$
with
$\gamma ( t^{\prime} ) = m^{\prime}$ and $\gamma ( t^{\prime} ) = m^{\prime\prime}$. 
Then, by the choice of $K_-$ and (ii), 
\begin{align*}
\mathcal{L}_0 (\gamma)
& \geq
\frac{1}{2}
\int_{ t^{\prime} }^{ t^{\prime\prime} }
\hspace{-2mm}
\vert \dot{\gamma}(t) \vert_{g(t)}^{2}
\mathrm{d}t
+
\frac{ d K_{-} }{2} \\
& \geq
\frac{1}{2}
\mathrm{e}^{ - K_{-} ( t^{\prime\prime} - t^{\prime} ) }
\int_{ t^{\prime} }^{ t^{\prime\prime} }
\hspace{-2mm}
\vert \dot{\gamma}(t) \vert_{g(t^{\prime\prime})}^{2}
\mathrm{d}t
+
\frac{ d K_{-} }{2} \\
& \geq
\frac{1}{2}
\mathrm{e}^{ - K_{-} ( t^{\prime\prime} - t^{\prime} ) }
\frac{
	\rho_{ g( t^{\prime\prime} ) } ( m^{\prime}, m^{\prime\prime} )^{2}
}{\TT - \T}
+
\frac{ d K_{-} }{2} .
\end{align*}
Thus the conclusion holds since $\gamma$ is arbitrary. 
\end{proof} %%%%%%%%%%%%%%%%%%%%%%%%%%%%%%%%%%%%%%

\begin{Prop} %%%%%%%%%%%%%%%%%%%%%%%%%%%%%%%%%%%%%
\label{Est} %@@@@@@@@@@@@@@@@@@@@@@@@@@@@@@@@@@@@@ LABEL: Est
Let
$\M , \MM \in M$,
$0 \leq \T < \TT \leq T$
and let
$\gamma : [ \T , \TT ] \to M$
be an $\mathcal{L}_0$-geodesic joining
$( t^{\prime}, m^{\prime} )$
to
$( t^{\prime\prime}, m^{\prime\prime} )$.
For each curve $\eta : [ t^{\prime}, t^{\prime\prime} ] \to M$, put
\begin{equation*}
\begin{split}
K_{+}( \eta )
&:=
\inf
\Big\{
	K>0 :
	\text{ $\Ric_{g(t)} \leq K g(t)$ along $\eta (t)$ }
\Big\}, \\
%%%%%%%%%%
C( \eta )
&:=
\sup_{ t \in [t^{\prime}, t^{\prime\prime}] }
\vert \nabla^{g(t)} R_{g(t)} ( \eta (t) ) \vert_{g(t)}^{2} .
\end{split}
\end{equation*}
Then we have the following: 
\begin{itemize}
\item[(i)] {\rm Upper bound for $L_{0}$:}
For each $g(t^{\prime})$-geodesic $c : [ \T , \TT ] \to M$
with $c(t^{\prime}) = m^{\prime}$ and $c(t^{\prime\prime}) = m^{\prime\prime}$,
$$
L_{0}^{\T,\TT} (\M ,\MM)
\leq
\frac{ \rho_{g(\T)} ( \M ,\MM )^{2} }{ 2 (\TT -\T )^{2} }
\frac{
	\mathrm{e}^{ 2K_- ( \TT - \T ) }
	-
	1
}{ 2K_- }
+ \frac{ dK_{+} ( c ) (\TT -\T ) }{2}.
$$

\item[(ii)] {\rm Bound of $\displaystyle \frac{\D}{\D t} |\dot{\gamma} (t)|_{g(t)}^{2}$:}
For each $t \in [ t^{\prime}, t^{\prime\prime} ]$,
we have
$$
- \Big( 2K_- + \frac{1}{2} \Big)
|\dot{\gamma} (t)|_{g(t)}^{2} - \frac{ C (\gamma ) }{2}
\leq
\frac{\D}{\D t} | \dot{\gamma} (t)|_{g(t)}^{2}
\leq
\Big( 2K_{+} ( \gamma ) + \frac{1}{2} \Big) |\dot{\gamma} (t)|_{g(t)}^{2} + \frac{ C (\gamma ) }{2}.
$$

\vspace{2mm}
\item[(iii)] {\rm Comparison of $|\dot{\gamma} (t)|_{g(t)}^{2}$ between different times:}
There are constants $c_i > 0$ $(i=1,2,3,4)$ such that, 
for $\T \leq u^{\prime} \leq u^{\prime\prime} \leq \TT$,
\begin{align*}
|\dot{\gamma} (u^{\prime\prime})|_{g(u^{\prime\prime})}^{2}
&\leq
c_1 
|\dot{\gamma} (u^{\prime})|_{g(u^{\prime})}^{2}
+
c_2 , \\
%%%
|\dot{\gamma} (u^{\prime})|_{g(u^{\prime})}^{2}
&\leq
c_3 | \dot{\gamma} ( u^{\prime\prime} ) |_{g(u^{\prime\prime})}^{2}
+
c_4 
\end{align*}
where the constants depend only
on $T$, $K_-$, $K_{+}(\gamma )$ and $C (\gamma )$.

\item[(iv)] {\rm Bounding the speed of $\mathcal{L}_{0}$-minimizing curve at some time by $\mathcal{L}_{0}$:}
If the curve $\gamma$ is $\mathcal{L}_{0}$-minimizing,
there is $t^{*}\in (\T ,\TT )$ such that
$$
\frac12 
|\dot{\gamma} (t^{*})|_{g(t^{*})}^{2}
\leq
\frac{ L_{0}^{\T ,\TT} (\M ,\MM ) }{ \TT -\T } + \frac{dK_-}{2}.
$$
\end{itemize}
\end{Prop}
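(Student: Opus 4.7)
For (i), the plan is to evaluate $\mathcal{L}_0(c)$ directly on the $g(t')$-geodesic $c$ from $m'$ to $m''$ and invoke $L_0^{t',t''}(m',m'') \le \mathcal{L}_0(c)$. Since $c$ is a $g(t')$-geodesic, $|\dot c(t)|_{g(t')}$ is constant along $c$ and equals $\rho_{g(t')}(m',m'')/(t''-t')$; applying Proposition \ref{Est0}(i) backwards in time gives $|\dot c(t)|_{g(t)}^2 \le \mathrm{e}^{2K_-(t-t')}|\dot c(t)|_{g(t')}^2$. For the scalar curvature term, the defining bound $\Ric_{g(t)} \le K_+(c) g(t)$ along $c$ implies $R_{g(t)}(c(t))\le d\,K_+(c)$ by taking the trace. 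Integrating both contributions over $[t',t'']$ and using $\int_{t'}^{t''}\mathrm{e}^{2K_-(t-t')}\,\D t=(\mathrm{e}^{2K_-(t''-t')}-1)/(2K_-)$ produces the stated estimate.

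For (ii), the idea is to differentiate $\phi(t):=|\dot\gamma(t)|_{g(t)}^2$ and substitute the $\mathcal{L}_0$-geodesic equation \eqref{L0geod}. The evolution of $g$ contributes $(\partial_t g)(\dot\gamma,\dot\gamma)=-2\Ric_{g(t)}(\dot\gamma,\dot\gamma)$, while $2\langle\nabla^{g(t)}_{\dot\gamma}\dot\gamma,\dot\gamma\rangle_{g(t)}=\langle\nabla^{g(t)} R_{g(t)},\dot\gamma\rangle_{g(t)}+4\Ric_{g(t)}(\dot\gamma,\dot\gamma)$ by \eqref{L0geod}. Summing these yields
\[
\phi'(t)=2\Ric_{g(t)}(\dot\gamma,\dot\gamma)+\langle\nabla^{g(t)} R_{g(t)},\dot\gamma\rangle_{g(t)}.
\]
The first term is controlled by $-K_- g\le\Ric_{g(t)}\le K_+(\gamma)g$ along $\gamma$; for the second term, Young's inequality gives $|\langle\nabla^{g(t)} R_{g(t)},\dot\gamma\rangle|\le \tfrac12 \phi(t)+\tfrac12 C(\gamma)$. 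Combining these yields both the upper and lower bounds exactly as stated. The only subtlety here is striking the right balance in Young's inequality so that the coefficient of $\phi(t)$ from the cross-term comes out as $1/2$.

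For (iii), I would simply integrate the linear differential inequalities from (ii) via Gronwall's lemma: the upper bound $\phi'\le A\phi+B$ with $A=2K_+(\gamma)+\tfrac12$ and $B=C(\gamma)/2$ gives $\phi(u'')\le \mathrm{e}^{A(u''-u')}\phi(u')+B(\mathrm{e}^{A(u''-u')}-1)/A$, producing $c_1,c_2$ depending on $T$, $K_+(\gamma)$, $C(\gamma)$; the reverse inequality comes from running the lower bound in (ii) backwards in time, producing $c_3,c_4$ depending on $T$, $K_-$, $C(\gamma)$.

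For (iv), since $\gamma$ is $\mathcal{L}_0$-minimizing, $L_0^{t',t''}(m',m'')=\mathcal{L}_0(\gamma)=\tfrac12\int_{t'}^{t''}(|\dot\gamma|_{g(t)}^2+R_{g(t)}(\gamma(t)))\,\D t$. By the mean value theorem for integrals, there is $t^\ast\in(t',t'')$ with
\[
\tfrac12\bigl(|\dot\gamma(t^\ast)|_{g(t^\ast)}^2+R_{g(t^\ast)}(\gamma(t^\ast))\bigr)=\frac{L_0^{t',t''}(m',m'')}{t''-t'};
\]
combining with $R_{g(t^\ast)}\ge -dK_-$ (trace of the Ricci lower bound) yields the claim. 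No step is really the main obstacle; the bookkeeping in (ii) is the only place where care is required to produce the stated constants, and the rest is routine integration and the mean value theorem.
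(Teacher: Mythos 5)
Your proposal is correct and follows essentially the same route as the paper: (i) by evaluating $\mathcal{L}_0$ on the $g(t')$-geodesic and using Proposition \ref{Est0}(i), (ii) by substituting the $\mathcal{L}_0$-geodesic equation and Young's inequality, (iii) by Gronwall, and (iv) by the mean value theorem. The only cosmetic difference is in (iv), where you apply the mean value theorem to the full integrand $\tfrac12(|\dot\gamma|^2+R)$ while the paper applies it to $|\dot\gamma|^2$ alone and then bounds the scalar-curvature integral separately; both give the stated estimate.
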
%%%%%%%%%%%%%%%%%%%%%%%%%%%%%%%%%%%%%%%%

\begin{proof}%%%%%%%%%%%%%%%%%%%%%%%%%%%%%%%%%%%%%
(i) The proof is similar to Proposition \ref{Est0}(iii).
By Proposition \ref{Est0}(i), 
we see that
\begin{align*}
L_{0}^{\T ,\TT}(\M ,\MM )
\leq \mathcal{L}_{0} (c)
\leq \frac{1}{2} \int_{\T}^{\TT} \hspace{-3mm}
\Big\{
	\mathrm{e}^{ 2K_- ( t - \T ) }
	\vert \dot{c} (t) \vert_{g(\T)}^{2}
	+
	d K_{+}(c)
\Big\} \D t .
\end{align*}
Since
$\displaystyle
\vert \dot{c} (t) \vert_{g(\T)}
\equiv
\frac{ \rho_{g(\T)} ( \M ,\MM ) }{ \TT - \T }
$,
the claim holds.

(ii) Using the
$\mathcal{L}_{0}$-geodesic equation,
\begin{align*}
&
\frac{\D}{\D t} \vert \dot{\gamma}(t) \vert_{g(t)}^{2}
=
- 2 \Ric_{g(t)} ( \dot{\gamma}(t), \dot{\gamma}(t) )
+
2
\big\langle
	\nabla_{\dot{\gamma}(t)}^{g(t)} \dot{\gamma}(t),
	\dot{\gamma}(t)
\big\rangle_{g(t)} \\
&=
2 \Ric_{g(t)} ( \dot{\gamma}(t), \dot{\gamma}(t) )
+
\big\langle
	\nabla^{g(t)} R_{g(t)},
	\dot{\gamma}(t)
\big\rangle_{g(t)} \\
&\leq
2 \Ric_{g(t)} ( \dot{\gamma}(t), \dot{\gamma}(t) )
+
\frac{
	\vert \nabla^{g(t)} R_{g(t)} \vert_{g(t)}^{2}
	+
	\vert \dot{\gamma}(t) \vert_{g(t)}^{2}
}{
	2
} \\
&\leq
\Big(
	2 K_{+} (\gamma )
	+
	\frac{1}{2}
\Big)
\vert \dot{\gamma}(t) \vert_{g(t)}^{2}
+
\frac{ C ( \gamma ) }{2} .
\end{align*}
The lower bound
$\displaystyle
\frac{\D}{\D t} \vert \dot{\gamma}(t) \vert_{g(t)}^{2}
\geq
-
\Big( 2K_- + \frac{1}{2} \Big)
\vert \dot{\gamma}(t) \vert_{g(t)}^{2}
-
\frac{ C (\gamma ) }{2}
$
can be obtained similarly.

(iii) By (ii), 
Gronwall's lemma implies
\begin{equation*}
\begin{split}
&
\vert \dot{\gamma} ( u^{\prime\prime} ) \vert_{ g( u^{\prime\prime} ) }^{2} \\
&\leq
\mathrm{e}^{
	( 2K_{+}(\gamma ) + \frac{1}{2} )
	( u^{\prime\prime} - u^{\prime} )
}
\vert
	\dot{\gamma} ( u^{\prime} )
\vert_{ g( u^{\prime} ) }^{2}
+
\frac{
	C (\gamma )
}{
	4K_{+} (\gamma ) +1
}
\Big(
	\mathrm{e}^{
		( 2K_{+} (\gamma ) +\frac{1}{2} )
		( u^{\prime\prime} - u^{\prime} )
	}
	- 1
\Big) .
\end{split}
\end{equation*}
The other is obtained similarly.

(iv) By the mean value theorem, 
we can take $t^{*} \in ( \T ,\TT )$ such that
$$
\vert \dot{\gamma} ( t^{*} ) \vert_{g(t^{*})}^{2}
=
\frac{1}{ \TT -\T } \int_{\T}^{\TT} \hspace{-3mm}
\vert \dot{\gamma} (t) \vert_{g(t)}^{2} \D t.
$$
Since $\gamma$ is $\mathcal{L}_{0}$-minimizing,
the right hand side is dominated by
\begin{align*}
\frac{ L_{0}^{\T ,\TT} (\M ,\MM ) }{ \TT - \T }
- \frac{1}{2 (\TT - \T ) } \int_{\T}^{\TT} \hspace{-3mm} R_{g(t)}
( \gamma (t) ) \D t
\leq
\frac{ L_{0}^{\T ,\TT} (\M ,\MM ) }{ \TT - \T } + 
\frac{ d K_- }{2}.
\end{align*}
\end{proof}%%%%%%%%%%%%%%%%%%%%%%%%%%%%%%%%%%%%%%%

The following is a starting point of local estimates in this subsection. 
\begin{Lem} %%%%%%%%%%%%%%%%%%%%%%%%%%%%%%%%%%%%%%
\label{Min} %@@@@@@@@@@@@@@@@@@@@@@@@@@@@@@@@@@@@@ LABEL: Min
For each $\delta > 0$ and a relatively compact open subset $M_{0} \subset M$, 
there exists a relatively compact open subset
$B \supset M_{0}$ 
such that, for each $\M, \MM \in M_{0}$ and $t, \T, \TT \in [ 0, T ]$ with $\TT - \T \ge \delta$,
all $\mathcal{L}_{0}$-minimizing curves joining
$
( t^{\prime}, m^{\prime} )
$
and
$
( t^{\prime\prime}, m^{\prime\prime} )
$
and all $g(t)$-length-minimizing curves joining $\M$ and $\MM$ 
are contained in $B$.
\end{Lem}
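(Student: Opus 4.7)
The plan is to handle the $g(t)$-length-minimizing curves first, and then leverage this to control $\mathcal{L}_{0}$-minimizing curves through the upper bound in Proposition~\ref{Est}(i). The overall strategy is to show that both types of curves lie within some closed $g(t)$-ball $\overline{B_{g(t)}(\M, R)}$ of uniform radius $R > 0$ and center $(\M, t) \in \overline{M_0} \times [0, T]$, and then to produce a single relatively compact open $B$ containing all such balls.

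For the Riemannian case, any $g(t)$-length-minimizing curve from $\M$ to $\MM$ in $M_0$ has $g(t)$-length $\rho_{g(t)}(\M, \MM)$, which by Proposition~\ref{Est0}(ii) is bounded uniformly by
\[
D := e^{K_- T}\, \mathrm{diam}_{g(0)}(\overline{M_0}) < \infty.
\]
For the $\mathcal{L}_{0}$-minimizing case, given a minimizer $\gamma : [\T, \TT] \to M$ with $\M, \MM \in M_0$ and $\TT - \T \geq \delta$, I would apply Proposition~\ref{Est}(i) to a $g(\T)$-length-minimizing test curve from $\M$ to $\MM$; by the Riemannian bound this test curve lies in a relatively compact set on which $\Ric_{g(t)}$ is bounded above by some $K^{*}$ uniformly in $t \in [0, T]$, yielding a uniform upper bound $L_{0}^{\T, \TT}(\M, \MM) \leq C_{1}$. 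Combining this with Proposition~\ref{Est0}(iii) gives
\[
\int_{\T}^{\TT} |\dot\gamma(t)|_{g(\TT)}^{2} \, \D t \;\leq\; 2 e^{K_- T}\Bigl( C_{1} + \tfrac{d K_{-} T}{2} \Bigr) \;=:\; C_{2}.
\]
By the Cauchy--Schwarz inequality, the $g(\TT)$-length of $\gamma$ is at most $\sqrt{T C_{2}}$, so $\rho_{g(\TT)}(\M, \gamma(s)) \leq \sqrt{T C_{2}}$ for all $s \in [\T, \TT]$, putting $\gamma$ in a closed $g(\TT)$-ball of uniform radius.

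The main obstacle is the final step: assembling these ball-wise bounds into a single relatively compact $B$. Since only a lower Ricci bound is assumed, a $g(t)$-ball of fixed radius need not be $g(0)$-bounded, because $g(t)$ can shrink arbitrarily fast compared to $g(0)$ in the absence of an upper Ricci bound. The resolution exploits the smoothness of the Ricci flow on $M \times [0, T]$, which renders $\{g(t)|_{\overline{\Omega}}\}_{t \in [0, T]}$ uniformly equivalent to $g(0)|_{\overline{\Omega}}$ on any relatively compact open $\Omega \supset \overline{M_0}$. Combined with the completeness of $(M, g(0))$ and a compactness argument over $\overline{M_0} \times [0, T]$ (e.g., by exhausting $M$ by compact sets and arguing by contradiction with sequential compactness of endpoints), this produces the desired $B$.
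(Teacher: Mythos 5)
Your first two steps---the uniform upper bound $C_1$ on $L_0^{\T,\TT}(\M,\MM)$ via Proposition~\ref{Est}(i), and the bound $\int_{\T}^{\TT}|\dot\gamma(t)|_{g(\TT)}^2\,\D t\le C_2$ via Proposition~\ref{Est0}(iii)---are exactly the paper's strategy. The gap is in the step you yourself flag as ``the main obstacle,'' and your sketched resolution does not close it. Uniform equivalence of $\{g(t)\}_{t\in[0,T]}$ with $g(0)$ on a relatively compact $\Omega$ is only available \emph{after} you know the curves stay inside $\Omega$, so invoking it here is circular; and sequential compactness of the endpoints in $\overline{M_0}$ puts no constraint on the excursions of the curves away from $\overline{M_0}$, which is precisely what must be ruled out, so the ``contradiction with sequential compactness of endpoints'' has no content as stated.

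The missing observation is that the one-sided comparison you already used, Proposition~\ref{Est0}(ii), settles the assembly problem once you take $g(T)$ rather than $g(0)$ as the reference metric: the lower Ricci bound gives $\rho_{g(T)}\le \mathrm{e}^{K_-(T-\TT)}\rho_{g(\TT)}$ \emph{globally on $M$}, so your $g(\TT)$-ball of radius $\sqrt{TC_2}$ about $\M$ is contained in the $g(T)$-ball of radius $\mathrm{e}^{K_-T}\sqrt{TC_2}$ about $\M$, and the set $\{x:\rho_{g(T)}(x,\overline{M_0})\le \mathrm{e}^{K_-T}\sqrt{TC_2}\}$ is compact by Hopf--Rinow because $(M,g(T))$ is complete. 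No upper Ricci bound or two-sided metric equivalence is needed. This is how the paper argues: it fixes a relatively compact open $B$ with $\rho_{g(T)}(M_0,B^c)$ larger than an explicit threshold and shows, via Proposition~\ref{Est0}(iii) together with \ref{Est0}(i), that any curve with endpoints in $M_0$ which exits $B$ has $\mathcal{L}_0$-value exceeding the upper bound $R_1$, hence cannot be minimizing. Note also that the same reference-metric device is needed earlier in your argument than you acknowledge: to bound $K_+(c)$ for the $g(\T)$-geodesic test curve $c$ in Proposition~\ref{Est}(i), you must first confine all such test geodesics (for all $\T$) to one compact set, which again requires comparing against the fixed metric $g(T)$ (or $g(0)$, for which the comparison in Proposition~\ref{Est0}(ii) goes the right way for length upper bounds) rather than treating each $t$ separately.
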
 %%%%%%%%%%%%%%%%%%%%%%%%%%%%%%%%%%%%%%%%

\begin{proof} %%%%%%%%%%%%%%%%%%%%%%%%%%%%%%%%%%%%
Since $M_0$ is relatively compact, there is a compact $B_0$ 
with $M_0 \subset B_0 \subset M$ which contains 
all $g(0)$-geodesics joining any pair of points in $M_0$. 
Let us define $K_+ = K_+ ( B_0 )$ and $R_1 : = R_1 ( B_0 )$ by 
\begin{align*}
K_{+} ( B_0 )
&:=
\inf
\left\{
	K>0 :
	\text{$\Ric_{g(t)} \leq K g(t)$ on $B_0$}
\right\}, \\
R_1 ( B_0 )
&:=
\sup_{ \M , \MM \in M_0 }
\left\{ 
    \frac{ \rho_{g(0)} ( \M ,\MM )^{2} }{ 2 \delta^{2} }
    \frac{ \mathrm{e}^T - 1 }{ 2K_- }
    + 
    \frac{ dK_{+} ( B_0 ) T }{2} 
\right\}.
\end{align*}
Then, by Proposition \ref{Est}(i) and Proposition \ref{Est0}(ii), 
$L_0^{\T, \TT} ( \M , \MM ) \le R_1$ 
holds for all $0 \le \T < \TT \le T$ with $\TT - \T \ge \delta$ 
and $\M , \MM \in M_0$. 
Let $R_2 = R_2 (B_0)$ be defined as follows: 
\begin{equation*}
R_2 (B_0)
:=
2T \mathrm{e}^{ K_- T } \left( R_1 + \frac{d K_- T}{2} \right). 
\end{equation*}
Take a relatively compact open $B$ with $B_1 \subset B \subset  M$ 
such that $\rho_{g(T)} ( M_0 , B^c ) > \sqrt{R_2 / 2}$. 
Then, for any curve $\gamma : [ \T , \TT ] \to M$ 
with $\gamma ( \T ), \gamma ( \TT ) \in M_0$ and 
$\gamma ( [ \T , \TT ] ) \cap B^c \neq \emptyset$, 
Proposition \ref{Est0}(iii) and Proposition \ref{Est0}(i) 
yields $\mathcal{L}_0 (\gamma) > R_1$. 
Thus $B$ enjoys the claimed property on $\mathcal{L}_0$-minimizing curves. 
By a similar argument, we can prove the corresponding property 
for $g(t)$-geodesics by using Proposition \ref{Est0}(ii). 
Thus, the assertion holds by enlarging $B$ if necessary. 
\end{proof} %%%%%%%%%%%%%%%%%%%%%%%%%%%%%%%%%%%%%%

\begin{Rm}%[{\it Please don't skip!}] %%%%%%%%%%%%
\label{PDS} %@@@@@@@@@@@@@@@@@@@@@@@@@@@@@@@@@@@@@ LABEL: PDS
By Proposition \ref{Est} and Lemma \ref{Min},
we see that
for each bounded open set $M_{0} \subset M$ and $\delta >0$,
there is positive constants
$K_{+} = K_{+} ( M_{0}, \delta ) > 0$ and $C = C( M_{0}, \delta )>0$
such that
\begin{equation*}
\Ric_{g} \leq K_{+} g,
\quad\text{and}\quad
\vert
	\nabla^{g(t)} R_{g(t)} (m)
\vert_{g(t)}^{2}
< C
\quad\text{on $[0,T] \times M_{0}$,}
\end{equation*}
and further, for each
$m^{\prime}$, $m^{\prime\prime} \in M_{0}$,
$t^{\prime\prime} - t^{\prime} \geq \delta$
and
each $\mathcal{L}_{0}$-minimizing curve $\gamma$ joining
$
( t^{\prime}, m^{\prime} )
$
and
$
( t^{\prime\prime}, m^{\prime\prime} )
$,
we have
\begin{equation}
\label{eq:L0Bound}
L_{0}^{ t^{\prime}, t^{\prime\prime} }
( m^{\prime}, m^{\prime\prime} )
\leq
\frac{
	\rho_{ g(t^{\prime}) }
	( m^{\prime}, m^{\prime\prime} )^{2}
}{
	2 ( t^{\prime\prime} - t^{\prime} )^{2}
}
\frac{
	\mathrm{e}^{ 2K_{-} t^{\prime\prime} }
	-
	\mathrm{e}^{ 2K_{-} t^{\prime} }
}{
	2 K_{-}
}
+
\frac{
	d K_{+} ( t^{\prime\prime} - t^{\prime} )
}{ 2 }
\end{equation}
and
\begin{equation}
\label{eq:speedBound}
-
\Big(
	2K_{-} + \frac{1}{2}
\Big)
\vert
	\dot{\gamma} (t)
\vert_{g(t)}^{2}
-
\frac{C}{2}
\leq
\frac{
	\mathrm{d}
}{
	\mathrm{d} t
}
\vert \dot{\gamma}(t) \vert_{g(t)}^{2}
\leq
\Big(
	2 K_{+} + \frac{1}{2}
\Big)
\vert \dot{\gamma}(t) \vert_{g(t)}^{2}
+
\frac{C}{2}.
\end{equation}
\end{Rm} %%%%%%%%%%%%%%%%%%%%%%%%%%%%%%%%%%%%%%%%%

\begin{Lem} %%%%%%%%%%%%%%%%%%%%%%%%%%%%%%%%%%%%%%
\label{Continuity} %@@@@@@@@@@@@@@@@@@@@@@@@@@@@@@ LABEL: Continuity
$
(
	t^{\prime}, m^{\prime} ;
	t^{\prime\prime} , m^{\prime\prime}
)
\mapsto
L_0^{\T, \TT} ( m^{\prime} , m^{\prime\prime} )
$
is continuous on 
$\{ ( \T , \M ; \TT , \MM ) 
\; | \; 
0 \le \T < \TT \le T , \M , \MM \in M \}$.
\end{Lem}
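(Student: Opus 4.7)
The plan is to prove continuity by separately establishing upper and lower semicontinuity at an arbitrary point $(\T_0, \M_0; \TT_0, \MM_0)$ with $\T_0 < \TT_0$. Fix a sequence $(\T_n, \M_n; \TT_n, \MM_n) \to (\T_0, \M_0; \TT_0, \MM_0)$. We may assume $\TT_n - \T_n \geq \delta$ for some $\delta > 0$ and that all the points $\M_n, \MM_n$ lie in a fixed relatively compact open $M_0 \subset M$.

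For upper semicontinuity, I would fix $\varepsilon > 0$ and take a piecewise smooth $\gamma : [\T_0, \TT_0] \to M$ with $\mathcal{L}_0(\gamma) \leq L_0^{\T_0, \TT_0}(\M_0, \MM_0) + \varepsilon$. The idea is to build a comparison curve $\gamma_n : [\T_n, \TT_n] \to M$ by concatenating: a short $g(\T_n)$-minimizing geodesic from $\M_n$ to $\M_0$ (used on a tiny subinterval near $\T_n$), a linear reparametrization of $\gamma$ over the middle, and a short $g(\TT_n)$-minimizing geodesic from $\MM_0$ to $\MM_n$ (used on a tiny subinterval near $\TT_n$). Using Proposition~\ref{Est0}(i)--(ii) to control speeds and the continuity in $(t, m)$ of $R_{g(t)}(m)$, the cost of the geodesic pieces tends to $0$ and the cost of the reparametrized middle piece tends to $\mathcal{L}_0(\gamma)$. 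This yields $\limsup_n L_0^{\T_n, \TT_n}(\M_n, \MM_n) \leq L_0^{\T_0, \TT_0}(\M_0, \MM_0) + \varepsilon$, and $\varepsilon$ is arbitrary.

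For lower semicontinuity, for each $n$ pick a piecewise smooth $\gamma_n : [\T_n, \TT_n] \to M$ with $\mathcal{L}_0(\gamma_n) \leq L_0^{\T_n, \TT_n}(\M_n, \MM_n) + 1/n$. The upper semicontinuity already proved, together with Proposition~\ref{Est0}(iii), gives a uniform lower bound and (via upper semicontinuity) a uniform upper bound on $L_0^{\T_n, \TT_n}(\M_n, \MM_n)$. Lemma~\ref{Min} and an energy-type argument confine all but finitely many $\gamma_n$ to a fixed relatively compact open $B \supset M_0$; on $B$ the bounds of Remark~\ref{PDS} hold, so Proposition~\ref{Est}(iv) produces a time $t^\ast_n \in (\T_n, \TT_n)$ where $|\dot{\gamma}_n(t^\ast_n)|_{g(t^\ast_n)}^2$ is uniformly bounded, and Proposition~\ref{Est}(iii) propagates the bound to all of $[\T_n, \TT_n]$. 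After rescaling $\tilde{\gamma}_n(u) := \gamma_n(\T_n + u(\TT_n - \T_n))$ on $[0,1]$ and using that the metrics $g(t)|_{\overline{B}}$ are mutually equivalent (uniformly in $t \in [0,T]$), the $\tilde{\gamma}_n$ are uniformly Lipschitz with respect to $\rho_{g(0)}$, so by Arzel\`a--Ascoli a subsequence converges uniformly to some $\tilde{\gamma}_\infty : [0,1] \to \overline{B}$.

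Setting $\gamma_\infty(t) := \tilde{\gamma}_\infty((t - \T_0)/(\TT_0 - \T_0))$ yields a curve joining $(\T_0, \M_0)$ to $(\TT_0, \MM_0)$. Writing
\[
\mathcal{L}_0(\gamma_n)
=
\frac{1}{2(\TT_n - \T_n)} \int_0^1 |\dot{\tilde{\gamma}}_n(u)|^2_{g(\T_n + u(\TT_n - \T_n))} \, \D u
+ \frac{\TT_n - \T_n}{2} \int_0^1 R_{g(\cdot)}(\tilde{\gamma}_n(u)) \, \D u,
\]
and using weak compactness of $\dot{\tilde{\gamma}}_n$ in $L^2([0,1];\Real^N)$ (after a Nash embedding, say) with weak lower semicontinuity of the $L^2$-norm for the kinetic term, and dominated convergence for the continuous, bounded scalar curvature term, one obtains $\mathcal{L}_0(\gamma_\infty) \leq \liminf_n \mathcal{L}_0(\gamma_n)$. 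Hence $L_0^{\T_0, \TT_0}(\M_0, \MM_0) \leq \mathcal{L}_0(\gamma_\infty) \leq \liminf_n L_0^{\T_n, \TT_n}(\M_n, \MM_n)$. The main obstacle is the lower semicontinuity step: handling the moving domain $[\T_n, \TT_n]$ together with the time-dependent metric forces the rescaling device above, after which one has to argue lower semicontinuity of the kinetic integrand despite its dependence on a varying metric $g(\T_n + u(\TT_n - \T_n))$---this is handled by the uniform equivalence of metrics on $\overline{B}$ and the continuity of $g(\cdot)$ in $t$.
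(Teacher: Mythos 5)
Your upper-semicontinuity half is essentially the paper's argument: transplant an $\varepsilon$-optimal curve for the base configuration by concatenating short connecting arcs near the two endpoints with an affinely reparametrized middle piece, and control the three contributions via Proposition \ref{Est0} and the local bounds of Remark \ref{PDS}. Where you diverge is the other inequality: the paper never runs a compactness argument, but simply observes that the transplant estimate is symmetric, i.e.\ it bounds $L_0^{\T_0,\TT_0}(\M_0,\MM_0)-L_0^{\T,\TT}(\M,\MM)$ by transplanting an $\varepsilon$-optimal curve for $(\T,\M;\TT,\MM)$ back to the base configuration with the same uniform constants. This bypasses the entire Arzel\`a--Ascoli / weak-$L^2$ machinery of your lower-semicontinuity step, at the price of nothing; your direct-method argument is heavier and, more importantly, not airtight as written.

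The concrete gap is in the speed bounds for your competitors $\gamma_n$. Proposition \ref{Est}(ii)--(iii) (equivalently \eqref{eq:speedBound}) are proved using the $\mathcal{L}_0$-geodesic equation, and Proposition \ref{Est}(iv) assumes the curve is $\mathcal{L}_0$-\emph{minimizing}; your $\gamma_n$ are only $1/n$-optimal, hence need not satisfy the geodesic equation, and a near-optimal competitor can have arbitrarily large speed on a short subinterval. So the step ``Proposition \ref{Est}(iii) propagates the bound to all of $[\T_n,\TT_n]$'' fails, and with it the claimed equi-Lipschitz continuity. Two repairs are available: replace the $\gamma_n$ by genuine minimizers (Proposition \ref{CutLoc}(i) --- but note that this is stated later and its proof is itself a compactness argument of the kind you are reconstructing), or abandon pointwise bounds and use only the uniform energy bound $\int_{\T_n}^{\TT_n}|\dot{\gamma}_n(t)|^2_{g(t)}\,\D t\le 2\mathcal{L}_0(\gamma_n)+dK_-T$ coming from Proposition \ref{Est0}(iii), which after rescaling yields equi-$\tfrac12$-H\"older (not Lipschitz) continuity; that still suffices for Arzel\`a--Ascoli and for the weak-$L^2$ lower semicontinuity of the kinetic term. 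Finally, since $L_0$ is defined as an infimum over \emph{piecewise smooth} curves while your limit $\gamma_\infty$ is only Lipschitz/$H^1$, you also need the (standard but unstated) fact that the infimum is unchanged over absolutely continuous competitors before you may write $L_0^{\T_0,\TT_0}(\M_0,\MM_0)\le\mathcal{L}_0(\gamma_\infty)$.
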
 %%%%%%%%%%%%%%%%%%%%%%%%%%%%%%%%%%%%%%%%
\begin{proof} %%%%%%%%%%%%%%%%%%%%%%%%%%%%%%%%%%%%
Let $0 \le \T_0 < \TT_0 \le T$ and $\M_0 , \MM_0 \in M$ 
and take $\varepsilon \in ( 0, 1 )$. 
Take $\delta_0 > 0$ so that $\TT_0 - \T_0 \ge 2 \delta_0$. 
Let $U'$ and $U''$ be $g(0)$-metric balls of radius $\delta_0$ 
centered at $\M$ and $\MM$ respectively.
Let $B$ be as in Lemma \ref{Min} 
for $M_0 = U' \cup U''$ and $\delta = \delta_0$. 
Take $\delta_1' \in ( 0, \delta_0 / 4 )$ so that it satisfies the following: 
\begin{itemize}
\item
$( 1 - \varepsilon ) g (s) \le g(t) \le ( 1 + \varepsilon ) g(s)$ 
and $| R_{g(s)} - R_{g(t)} | < \varepsilon$ 
on $B$ for each $s , t \in [ 0 , T ]$ with $| s - t | < 2 \delta_1'$,  
\item
$4 \delta_1' / ( \delta_0 - 2 \delta_1' ) < \varepsilon$, 
\end{itemize}
Let $K_+$ and $C$ be as in Remark 2.2, corresponding to $M_0$ and $\delta_1'$. 
Take $\delta_1 \in ( 0 , \delta_1' )$ so that 
$d K_+ \delta_1 /2 < \varepsilon$. 
Take $\delta_2 > 0$ so that 
\[
\frac{\delta_2^2 
  ( e^{4 K_- T} 
  - e^{2 K_- ( 2 T - \delta_1 )} )
}{4 \delta_1^2 K_-} 
< \varepsilon 
\]
holds and take $\delta_3 = \delta_1 \wedge \delta_2$. 
Let $\M , \MM \in M$ with 
$\rho_{g(0)} ( \M , \M_0 ) \vee \rho_{g(0)} ( \MM , \MM_0 ) < \delta_3$ 
and $\T , \TT \in [ 0 , T ]$ with $| \T - \T_0 | \vee | \TT - \TT_0 | < \delta_3$. 
Take a curve $\gamma : [ \T_0 , \TT_0 ] \to M$ 
from $( \T_0 , \M_0 )$ to $( \TT_0 , \MM_0 )$ such that 
$\mathcal{L}_0 (\gamma) \le L^{\T_0 , \TT_0} ( \M_0 , \MM_0 ) + \varepsilon$. 
In addition, we take a curve $\gamma_1$ 
from $( \T, \M )$ to $( \T + \delta_1 , \M_0 )$, 
and a curve $\gamma_2$ 
from $( \TT - \delta_1 , \MM_0 )$ 
to $( \TT , \MM )$. 
Let $\alpha : [ \T + \delta_1 , \TT - \delta_1 ] \to [ \T_0 , \TT_0 ]$ 
be a (unique) affine increasing surjection. 
We define $\tilde{\gamma} : [ \T , \TT ] \to M$ as follows: 
\begin{equation*}
\tilde{\gamma} (t) : = 
\begin{cases}
\gamma_1 (t) & (t \in [ \T , \T + \delta_1 )), 
\vspace{.5ex}
\\
\displaystyle
\gamma 
\left( \alpha (t) \right)
& (t \in [ \T + \delta_1 , \TT - \delta_1 )), 
\vspace{.5ex}
\\
\gamma_2 (t) & (t \in [ \TT - \delta_1 , \TT ]).
\end{cases}
\end{equation*}
Note that, by the choice of $\delta_1$, 
\begin{align}
\label{eq:scale1}
\alpha' (t) 
& = 
\frac{ \TT_0 - \T_0 }{ \TT - \T - 2 \delta_1} \in [ 1 , 1 + \varepsilon ], 
\\
\label{eq:scale2}
| \alpha^{-1} (t) - t | 
& = 
\left| 
    \frac{ \TT - \T - 2 \delta_1}{ \TT_0 - \T_0 } 
    ( t - \T_0 ) 
    - 
    ( t - \T - \delta_1 )
\right| 
\\ \nonumber
& \le 
| \T_0 - \T - \delta_1 | 
\vee | \TT - \TT_0 - \delta_1 | \le 2 \delta_1 .
\end{align}

Now we turn to the estimate. 
We begin with the following basic estimate: 
\begin{align} \label{eq:tri}
L_0^{\T , \TT} ( \M , \MM ) - L_0^{\T_0 , \TT_0} ( \M_0 , \MM_0 ) 
& \le 
\mathcal{L}_0 ( \gamma_1 ) + \mathcal{L}_0 ( \gamma_2 ) 
\\ \nonumber
& \quad + 
\mathcal{L}_0 ( \tilde{\gamma}|_{[ \T + \delta_1 , \TT - \delta_1 ]} ) 
- \mathcal{L}_0 ( \gamma ) + \varepsilon . 
\end{align}
By the choice of $\tilde{\gamma}$, we have 
\begin{align*}
\mathcal{L}_0 ( \tilde{\gamma}|_{[ \T + \delta_1 , \TT - \delta_1 ]} ) 
& - \mathcal{L}_0 ( \gamma )
\\
& =  
\frac12 \int_{\T_0}^{\TT_0} 
\left\{
    \frac{ \TT_0 - \T_0 }{ \TT - \T - 2 \delta_1} 
    | \dot{\gamma} (t) |_{g ( \alpha^{-1} (t) )}^2 
    - 
    | \dot{\gamma} (t) |_{g(t)}^2 
\right\} 
\D t 
\\
& \quad + \frac12 
\int_{\T_0}^{\TT_0} 
\left\{ 
    \frac{ \TT - \T - 2 \delta_1}{ \TT_0 - \T_0 } 
    R_{g(\alpha^{-1} (t))} 
    - 
    R_{ g (t) } 
\right\} 
\D t .
\end{align*}
Then the choice of $\delta_1$ together with 
\eqref{eq:speedBound},
\eqref{eq:scale1} and \eqref{eq:scale2} 
yields that there is a constant $\hat{C} > 0$ depending only 
on $T$, $K_\pm$, $d$ and $C$
such that the right hand side of the last equality is bounded from above 
by $\hat{C} \varepsilon$. 
Moreover, by virtue of the choice of $\delta_1$ and $\delta_2$, 
\eqref{eq:L0Bound} yields  
$
  L_0^{\T, \T + \delta_1} ( \M , \M_0 ) 
  \vee 
  L_0^{\TT - \delta_1  , \TT} ( \MM_0 , \MM ) 
\le 
2 \varepsilon
$.
Thus, by minimizing the right hand side of \eqref{eq:tri} 
over $\gamma_1$ and $\gamma_2$, 
the left hand side is bounded from above 
by $( 5 + \hat{C} ) \varepsilon$. 
We can give the same upper bound to 
$L_0^{\T_0 , \TT_0} ( \M_0 , \MM_0 ) - L_0^{\T , \TT} ( \M , \MM )$ 
in the same manner and hence the assertion holds. 
\end{proof} %%%%%%%%%%%%%%%%%%%%%%%%%%%%%%%%%%%%%%

We fix a bounded open $M_{0} \subset M$ and $\delta >0$ arbitrarily
and denote by
$
K_{+} = K_{+} ( M_{0}, \delta )
$
and
$
C = C ( M_{0}, \delta )
$
the positive constants appeared in Remark \ref{PDS}.
Let
$$
\mathcal{M}_{0}
:=
\Big\{
( \T ,\M ;\TT ,\MM )
\in
[0, T] \times M_{0} \times [0, T ] \times M_{0}
: \T < \TT
\Big\} .
$$
For each $(\T ,\M ; \TT ,\MM ) \in \mathcal{M}_{0}$,
we denote by $\Gamma_{\M ,\MM}^{\T ,\TT}$
the set of all $\mathcal{L}_{0}$-minimizing paths joining $(\T ,\M)$ to $(\TT ,\MM )$.
We further define
\begin{align*}
\mathcal{M}_{\delta}
&:=
\big\{
	( \T ,\M ;\TT ,\MM )
	\in
	\mathcal{M}_{0} :
	\TT - \T > \delta
\big\}, \\
%%%
\Delta_{\delta}
&:=
\Big\{
	(\T ,\TT )
	\in
	[ 0, T ] \times [ 0, T ] :
	\TT - \T > \delta
\Big\}, 
\\
%%%
\Gamma_{\delta}
&:= \hspace{-3mm}
\bigcup_{
	(\T ,\M ;\TT ,\MM ) \in \overline{\mathcal{M}_{\delta}}
} 
\hspace{-7mm}
\Gamma_{\M ,\MM}^{\T ,\TT} \quad \text{and} \\
%%%
\iota : & \Gamma_{\delta} 
\to \overline{\Delta}_{\delta} \times C ([0,1] \to M)
\\
&\hspace{30mm} \text{by}\quad
\iota (\gamma ) := ( \T ,\TT , \widehat{\gamma} )
\quad \text{if $\gamma \in \Gamma_{\M ,\MM}^{\T ,\TT}$}
\end{align*}
where $\widehat{\gamma} \in C ( [0,1] \to M )$
is defined by
$$
\widehat{\gamma} (u)
:=
\gamma \big( \T + u (\TT - \T ) \big)
\quad \text{for $0\leq u \leq 1$.}
$$
We topologize $\Gamma_{\delta}$ by the pull-back distance
$$
d_{ \Gamma_{\delta} } (\gamma ,\eta)
:=
|\T - s^{\prime}| + |\TT - s^{\prime\prime}|
+ \sup_{0\leq u \leq 1} \rho_{g(T)} ( \widehat{\gamma}(u), \widehat{\eta}(u) )
$$
for $\gamma \in \Gamma_{\M ,\MM}^{\T ,\TT}$ and
$\eta \in \Gamma_{n^{\prime} ,n^{\prime\prime}}^{s^{\prime},s^{\prime\prime}}$.

\begin{Lem}\label{Speed} %%%%%%%%%%%%%%%%%%%%%%%%% LABEL: Speed
There are constant $c_5, c_6 > 0$ such that, for
each $0 \leq \T < \TT \leq T$, $\M$, $\MM \in \overline{M_{0}}$ 
and
$\gamma \in \Gamma_{\M ,\MM}^{\T ,\TT}$,
\begin{align*}
\sup_{\T \leq t \leq \TT} \vert \dot{\gamma} (t) \vert_{g(t)}^{2}
\leq
2 c_5
\Big\{
\frac{
	L_{0}^{ t^{\prime}, t^{\prime\prime} }
	( m^{\prime}, m^{\prime\prime} )
}{ \TT - \T }
+
\frac{ d K_{-} }{2}
\Big\}
+
c_{6}.
\end{align*}
\end{Lem}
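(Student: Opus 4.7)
My plan is to reduce the supremum bound to a single pointwise bound at a well-chosen time $t^{*} \in (\T, \TT)$, which is supplied by Proposition \ref{Est}(iv), and then to propagate this bound to every $t \in [\T, \TT]$ by means of the two-sided growth estimate in Proposition \ref{Est}(iii).

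First, I would observe that since $\gamma$ is $\mathcal{L}_0$-minimizing with endpoints in $\overline{M_0}$ on an interval of length bounded below (this being the implicit setting forced by the fixed data $M_0$ and $\delta$ through the constants $K_+$ and $C$), Lemma \ref{Min} confines the image of $\gamma$ to a relatively compact open set $B \supset \overline{M_0}$ depending only on $M_0$ and $\delta$. Remark \ref{PDS} then supplies the uniform bounds $K_+(\gamma) \leq K_+$ and $C(\gamma) \leq C$ along $\gamma$, so that the constants $c_1, c_2, c_3, c_4$ produced by Proposition \ref{Est}(iii) when applied to this $\gamma$ can themselves be controlled in terms only of $T$, $K_-$, $K_+$ and $C$.

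Next, Proposition \ref{Est}(iv) furnishes a point $t^{*} \in (\T, \TT)$ at which
\[
\tfrac{1}{2} |\dot{\gamma}(t^{*})|_{g(t^{*})}^{2}
\leq
\frac{L_{0}^{\T,\TT}(\M,\MM)}{\TT-\T}
+ \frac{d K_{-}}{2}.
\]
For each $t \in [\T, \TT]$, I would then apply Proposition \ref{Est}(iii) to the pair $(u', u'') = (t^{*}, t)$ when $t \geq t^{*}$ and to the pair $(u', u'') = (t, t^{*})$ when $t < t^{*}$. In either case, using the uniform bounds on $K_+(\gamma)$ and $C(\gamma)$ from the previous paragraph, this yields an estimate of the form
\[
|\dot{\gamma}(t)|_{g(t)}^{2}
\leq
c_{5}\, |\dot{\gamma}(t^{*})|_{g(t^{*})}^{2} + c_{6},
\]
with $c_{5}, c_{6}$ depending only on $T$, $K_{-}$, $K_{+}$ and $C$. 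Substituting the bound on $|\dot{\gamma}(t^{*})|_{g(t^{*})}^{2}$ and taking the supremum over $t \in [\T, \TT]$ yields the claimed inequality after adjusting $c_{6}$.

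The only delicate point is the uniformity of $c_{5}, c_{6}$ in $\gamma$ and in the endpoints $(\T,\M;\TT,\MM)$. This uniformity reduces to that of the geometric bounds $K_+(\gamma)$ and $C(\gamma)$ along all minimizing curves under consideration, which is precisely what Lemma \ref{Min} together with Remark \ref{PDS} provides. Apart from this book-keeping, every ingredient is already in place, so no further technical difficulty is expected.
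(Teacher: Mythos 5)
Your proposal is correct and follows essentially the same route as the paper: apply Proposition \ref{Est}(iv) to get a good bound at one time $t^{*}$, propagate it to all times via Proposition \ref{Est}(iii), and use Lemma \ref{Min} together with Remark \ref{PDS} to make the constants uniform over endpoints and minimizers. The paper's own proof is just a terser version of exactly this argument (it even names $c_5 = \max\{c_1,c_3\}$ and $c_6 = \max\{c_2,c_4\}$).
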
%%%%%%%%%%%%%%%%%%%%%%%%%%%%%%%%%%%%%%%%%
\begin{proof}%%%%%%%%%%%%%%%%%%%%%%%%%%%%%%%%%%%%%
For each $0 \le \T <  \TT \le T$, 
$\M , \MM \in \overline{M_0}$ and $\gamma \in \Gamma_{\M, \MM}^{\T , \TT}$, 
the claimed bound follows from Proposition \ref{Est}(iii) and (iv)
with constants $\max \{ c_1 , c_3 \}$ and $\max \{ c_2 , c_4 \}$ as $c_5$ and $c_6$. 
By virtue of Lemma \ref{Min} (cf. Remark \ref{PDS}), 
we can choose them to be independent of 
$\T , \TT$, $\M , \MM$ and $\gamma$.
\end{proof}%%%%%%%%%%%%%%%%%%%%%%%%%%%%%%%%%%%%%%%

\begin{Prop}[Compactness result]\label{Cpt} %%%%%% LABEL: Cpt
We have the following: 
\begin{itemize}
\item[(i)] {\rm Equi-Lipschitz estimate:}
For each $0 \leq \T < \TT \leq T$, 
$\M$, $\MM \in \overline{M_0}$,  
$\gamma \in \Gamma_{\M ,\MM}^{\T ,\TT}$
and $\T \leq s \leq u \leq \TT$,
\begin{align*}
&
\rho_{g(T)} ( \gamma (s), \gamma (u) )
\leq
\mathrm{const.} \Big( \frac{ 1 }{ \TT - \T } + 1 \Big) \vert u-s \vert , 
\end{align*}
where the constant depends on
$K_-$, $K_+$, $C$ 
but not on $s$, $u$, $\T$, $\TT$, $\M$, $\MM$
and the choice of $\gamma$.

\vspace{2mm}
\item[(ii)] {\rm Uniform boundedness:} $\Gamma_{\delta}$ is uniformly bounded.

\item[(iii)] {\rm Closedness:} $\iota (\Gamma_{\delta} )$ is closed in
$\overline{\Delta}_{\delta} \times C([0,1] \to M)$, 
where the topology of $\overline{\Delta}_{\delta} \times C([0,1] \to M)$ 
is given by the product of the Euclidean one in $\overline{\Delta}_{\delta}$ 
and the uniform topology in $C^{1}([0,1] \to M)$. 

\end{itemize}
Moreover, $\Gamma_{\delta}$ is compact.
\end{Prop}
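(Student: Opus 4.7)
The plan is to deduce (i) and (ii) directly from the preceding estimates, then (iii) by a lower-semicontinuity argument for $\mathcal{L}_0$ combined with Lemma~\ref{Continuity}, and finally to obtain compactness of $\Gamma_\delta$ via Arzel\`a--Ascoli.

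For (i), I would combine Lemma~\ref{Speed} with the upper bound \eqref{eq:L0Bound}. Since $\M, \MM \in \overline{M_0}$, Proposition~\ref{Est0}(ii) gives a uniform bound on $\rho_{g(\T)}(\M,\MM)$, so \eqref{eq:L0Bound} yields an estimate $L_0^{\T,\TT}(\M,\MM) \leq C_1/(\TT-\T) + C_2$; plugging this into Lemma~\ref{Speed} produces a speed bound of the form $|\dot\gamma(t)|_{g(t)} \leq C_3(1/(\TT-\T) + 1)$ with constants depending only on $K_\pm$, $C$, and the diameter of $M_0$. Passing to $g(T)$-norms via Proposition~\ref{Est0}(i) and integrating in $t$ from $s$ to $u$ then yields (i).

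For (ii), Lemma~\ref{Min} furnishes a relatively compact open $B \subset M$ containing every $\mathcal{L}_0$-minimizing curve with endpoints in $\overline{M_0}$ and time separation at least $\delta$. Since $\overline{\Delta}_\delta$ is a bounded subset of $\mathbb{R}^2$ and $\widehat\gamma([0,1]) \subset \overline{B}$ for every $\gamma \in \Gamma_\delta$, the distance $d_{\Gamma_\delta}$ is bounded on $\Gamma_\delta$.

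For (iii), let $\gamma_n \in \Gamma_{\M_n,\MM_n}^{\T_n,\TT_n}$ be a sequence with $\iota(\gamma_n) \to (\T, \TT, \widehat\gamma)$. Part~(i) makes the reparametrizations $\widehat\gamma_n$ equi-Lipschitz with values in $\overline{B}$, so $\widehat\gamma$ is Lipschitz with endpoints in $\overline{M_0}$ and $\TT - \T \geq \delta$. Setting $\gamma(t) := \widehat\gamma((t-\T)/(\TT-\T))$, a standard lower-semicontinuity argument for the quadratic Lagrangian $\mathcal{L}_0$ under uniform convergence with equi-Lipschitz bounds gives $\mathcal{L}_0(\gamma) \leq \liminf_n \mathcal{L}_0(\gamma_n)$; on the other hand, $\mathcal{L}_0(\gamma_n) = L_0^{\T_n,\TT_n}(\widehat\gamma_n(0), \widehat\gamma_n(1))$ converges to $L_0^{\T,\TT}(\widehat\gamma(0),\widehat\gamma(1))$ by Lemma~\ref{Continuity}, and $\mathcal{L}_0(\gamma) \geq L_0^{\T,\TT}(\widehat\gamma(0),\widehat\gamma(1))$ by definition, so $\gamma$ is $\mathcal{L}_0$-minimizing. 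Hence $\gamma$ solves the $\mathcal{L}_0$-geodesic equation \eqref{L0geod} and is $C^\infty$; the upgrade from $C^0$ to $C^1$ convergence follows from continuous dependence on initial data for \eqref{L0geod}, using Lemma~\ref{Speed} and a subsequence argument to pin down $\dot\gamma_n$ at one interior time. Compactness of $\Gamma_\delta$ then follows: Arzel\`a--Ascoli (via (i) and (ii)) gives a $C^0$-convergent subsequence of the $\widehat\gamma_n$, compactness of $\overline{\Delta}_\delta$ handles the endpoints, and (iii) identifies the limit. The main obstacle I anticipate is precisely this $C^1$ upgrade, since propagating a bound on $\dot\gamma_n$ uniformly across $[\T_n, \TT_n]$ to obtain equicontinuity of $\dot{\widehat\gamma}_n$ requires a careful combination of Lemma~\ref{Speed} with Proposition~\ref{Est}(ii)--(iii).
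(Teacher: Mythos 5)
Your proposal is correct and follows essentially the same route as the paper: part (i) via Lemma~\ref{Speed} combined with the upper bound \eqref{eq:L0Bound} and Proposition~\ref{Est0}(i), part (iii) via lower semicontinuity of the action together with Lemma~\ref{Continuity} (the paper phrases this as exhibiting $\iota(\Gamma_\delta)$ as a sublevel set of a lower semicontinuous functional $\mathscr{L}_0$ rather than arguing sequentially, but the content is identical), and compactness via Ascoli. The one place you diverge is the anticipated ``$C^1$ upgrade'' obstacle, which is in fact unnecessary: the metric $d_{\Gamma_\delta}$ involves only $\sup_u \rho_{g(T)}(\widehat{\gamma}(u),\widehat{\eta}(u))$, so closedness is asserted with respect to uniform ($C^0$) convergence of the reparametrized curves, which is exactly the topology in which the Tonelli-type lower semicontinuity the paper cites holds; no convergence of derivatives is required.
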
%%%%%%%%%%%%%%%%%%%%%%%%%%%%%%%%%%%%%%%%
\begin{proof}%%%%%%%%%%%%%%%%%%%%%%%%%%%%%%%%%%%%%
Equi-Lipschitz estimate:
For $0 \le s < u \le T$, we have
\begin{align*}
\rho_{g(T)} \big( \gamma (s), \gamma (u) \big)^{2}
\leq
\int_{s}^{u} \hspace{-2mm}
\vert \dot{\gamma} (t) \vert_{g(T)}^{2} \D t
\leq
\mathrm{e}^{ 2 K_{-} T }
\int_{s}^{u} \hspace{-2mm}
\vert \dot{\gamma} (t) \vert_{g(t)}^{2} \D t ,
\end{align*}
where we have used Proposition \ref{Est0}(i) in the last inequality.
Then the equi-Lipschitz estimate follows by
Lemma \ref{Speed} and the compactness of $\overline{M_{0}}$.

Uniform boundedness: It is obvious from the Equi-Lipschitz estimate.

Closedness:
Define
$
\mathscr{L}_{0}:
\overline{\Delta}_{\delta} \times C^{1} ( [0,1] \to M )
\to (-\infty , +\infty ]
$
by
$$
\mathscr{L}_{0} (\T ,\TT ,c)
:=
\frac{1}{2}
\int_{\T}^{\TT} \hspace{-3mm}
\Big\{
	\vert \dot{\widetilde{c}} (t) \vert_{g(t)}^{2}
	+
	R_{g(t)} ( \widetilde{c} (t) )
\Big\} \D t
$$
where
$\displaystyle
\widetilde{c} (t) := c \Big( \frac{t-\T}{\TT -\T} \Big)
$
for $\T \leq t \leq \TT$. 
Then $\mathscr{L}_{0}$ is lower semicontinuous
(see e.g., \cite[Appendix 1, pp198--201]{Ma}).
By using $\mathscr{L}_0$, $\iota ( \Gamma_\delta )$ is expressed as follows: 
$$
\iota ( \Gamma_{\delta} )
=
\Big\{
(\T , \TT ,c) :
\begin{array}{l}
\text{$c(0)$, $c(1) \in \overline{M}$, $\TT - \T \geq \delta$} \\
\text{ and $\mathscr{L}_{0}(\T ,\TT ,c) \leq L_{0}^{\T ,\TT} (c(0),c(1))$ }
\end{array}
\Big\}
$$
This expression yields that $\iota ( \Gamma_\delta )$ is closed
in
$
\overline{\Delta}_{\delta} \times C( [0,1] \to M )
$.

Finally, by combining them with Ascoli's compactness theorem, 
we conclude that $\Gamma_\delta$ is compact. 
\end{proof}%%%%%%%%%%%%%%%%%%%%%%%%%%%%%%%%%%%%%%%

%%%%%%%%%%%%%%%%%%%%%%%%%%%%%%%%%%%%%%%%%%%%%
\subsection{The $\mathcal{L}_{0}$-cut locus}
\label{sec:L0cut}
%%%%%%%%%%%%%%%%%%%%%%%%%%%%%%%%%%%%%%%%%%%%%

Set
$$
\mathcal{M}
:=
\Big\{
(\T ,\M ; \TT , \MM ) \in [0, T] \times M \times [0, T] \times M
: \T < \TT
\Big\} .
$$

%%%%%%%%%%%%%%%%%%%%%%%%%%%%%%%%%%%%%%%%%%%%%%%%%%
\begin{Def}[$\mathcal{L}_{0}$-exponential map and $\mathcal{L}_{0}$-cut locus]\ \\
\begin{itemize}
\vspace{-4mm}
\item[(i)] $\mathcal{L}_{0}\exp_{\M}^{\T,\TT} : T_{\M}M \to M$ is defined by
$$
\mathcal{L}_{0} \exp_{\M}^{\T ,\TT} (v) := \gamma (\TT)
$$
where the curve $\gamma$ is the solution to the $\mathcal{L}_{0}$-geodesic equation
$$
\left\{\begin{array}{l}
\displaystyle
\nabla_{\dot{\gamma}(t)}^{g(t)} \dot{\gamma}(t)
- \frac{1}{2} \nabla^{g(t)} R_{g(t)}
- 2 \Ric_{g(t)} ( \dot{\gamma}(t), \cdot )
= 0, \\
\gamma (\T) = \M ,\quad \dot{\gamma}(\T) = v \in T_{\M} M.
\end{array}\right.
$$
See the Remark \ref{WelDef}(i) below.

\vspace{2mm}
\item[(ii)] $\mathcal{L}_{0}\mathrm{cut}_{\M}^{\T ,\TT}$ is the set of all points $\MM \in M$
such that there is more than one $\mathcal{L}_{0}$-minimizing curve
$\gamma :[\T ,\TT] \to M$ with $\gamma (\T) = \M$ and $\gamma (\TT) = \MM$
or there is a $v \in T_{\M}M$ such that
$\MM = \mathcal{L}_{0} \exp_{\M}^{\T ,\TT} (v)$ and $v$ is a critical point of
$\mathcal{L}_{0}\exp_{\M}^{\T ,\TT}$.
We also use the notation
$
\mathcal{L}_{0}\mathrm{cut}^{\T ,\TT}
:=
\{
	( m^{\prime}, m^{\prime\prime} ) :
	m^{\prime\prime} \in \mathcal{L}_{0}\mathrm{cut}_{\M}^{\T ,\TT}
\}
$.

\vspace{2mm}
\item[(iii)] $\displaystyle
\mathcal{L}_{0}\mathrm{cut}
:= \Big\{
( \T ,\M ; \TT , \MM ) \in \mathcal{M} : \MM \in \mathcal{L}_{0}\mathrm{cut}_{\M}^{\T ,\TT}
\Big\} .
$
\end{itemize}
\end{Def}%%%%%%%%%%%%%%%%%%%%%%%%%%%%%%%%%%%%%%%%%

\begin{Rm} %%%%%%%%%%%%%%%%%%%%%%%%%%%%%%%%%%%%%%%
\label{WelDef} %@@@@@@@@@@@@@@@@@@@@@@@@@@@@@@@@@@ LABEL: WelDef
(i)
Since our Ricci flow is assumed to be complete,
we see that $\mathcal{L}_{0}\exp_{\M}^{\T,\TT}$ is well-defined as follows.
Given initial data
$\gamma (t^{\prime}) = m^{\prime}$ and $\dot{\gamma} (t^{\prime}) = v$,
let $I \subset [ t^{\prime} , T]$ be the maximal interval,
on which the $\mathcal{L}_{0}$-geodesic equation for $\gamma$ can be solved
(Recall that our Ricci flow is defined on $[0,T]$).
Since the $\mathcal{L}_{0}$-geodesic equation is of the normal form,
the standard theory of ODE shows that $I$ is open in $[ t^{\prime} , T]$.
On the other hand, the Gronwall lemma,
applied to the first inequality in Proposition \ref{Est}(iii)
(take $u^{\prime}=t^{\prime}$ and $u^{\prime\prime}=t$),
gives an upper bound for $\sup_{t \in I} \vert \dot{\gamma}(t) \vert_{g(t)}$.
This implies that $\{ \gamma (t) : t \in I \}$ is a bounded set.
From the completeness of the metric, $\gamma (t)$ converges as
$t \to \sup I$, which turns to show that $I$ is closed in $[ t^{\prime} , T]$.
Since $[ t^{\prime}, T]$ is connected and $I \neq \emptyset$,
we must have $I = [ t^{\prime}, T]$.
As $v$ is arbitrary, we conclude that $\mathcal{L}_{0}\exp_{\M}^{\T,\TT}$ is defined 
on the whole of $T_{\M}M$.

(ii)
For any $\mathcal{L}_{0}$-geodesic
$
\gamma : [ t^{\prime} , t^{\prime\prime} ] \to M
$, 
if $\dot{\gamma} ( t^{\prime} ) \neq 0$,
then 
$\dot{\gamma} ( t^{\prime\prime} ) \neq 0$ 
must hold.
This can be seen as follows:
Let
$\eta (t) := \gamma ( t^{\prime\prime} + t^{\prime} - t )$
and
$\widetilde{g} (t) := g ( t^{\prime\prime} + t^{\prime} - t )$
for $t \in [ t^{\prime} , t^{\prime\prime} ]$.
Then $\eta$ satisfies the differential equation
$$
\nabla_{ \dot{\eta}(t) }^{ \widetilde{g}(t) } \dot{\eta}(t)
- \frac{1}{2} \nabla^{ \widetilde{g}(t) } R_{ \widetilde{g}(t)}
+ 2 \Ric_{ \widetilde{g}(t) } ( \dot{\eta}(t), \cdot )
= 0
$$
with
$\eta ( t^{\prime} ) = \gamma ( t^{\prime\prime} )$
and
$\dot{\eta} ( t^{\prime} ) = - \dot{\gamma} ( t^{\prime\prime} )$.
If $\dot{\gamma} ( t^{\prime\prime} ) = 0$,
then
$
\nabla^{ \widetilde{g}(t^{\prime}) } R_{ \widetilde{g}(t^{\prime}) }
( \eta ( t^{\prime} ) )
=
\nabla^{g(t^{\prime\prime})} R_{g(t^{\prime\prime})}
( \gamma ( t^{\prime\prime} ) )
= 0
$.
Therefore $\eta^{*} (t) \equiv \eta ( t^{\prime} )$
also satisfies the same differential equation as $\eta$
with the same initial condition.
By uniqueness, 
$\eta^{*} (t) \equiv \eta ( t )$ must hold and hence 
$\dot{\gamma} (t^{\prime}) = -\dot{\eta}( t^{ \prime \prime} ) = 0$.
Thus the assertion holds.
\end{Rm} %%%%%%%%%%%%%%%%%%%%%%%%%%%%%%%%%%%%%%%%%

\begin{Prop} %%%%%%%%%%%%%%%%%%%%%%%%%%%%%%%%%%%%%
\label{CutLoc}\ \\%@@@@@@@@@@@@@@@@@@@@@@@@@@@@@@@ Label: CutLoc
\begin{itemize}
\vspace{-4mm}
\item[(i)]
For each $\T <\TT$ and $\M$, $\MM \in M$, there is a smooth path
$\gamma :[\T ,\TT] \to M$ joining $\M$ to $\MM$ such that $\gamma$ has the
minimal $\mathcal{L}_{0}$-length among all such paths 
(see Lemma 7.27 in \cite{CCGGIIKLLN}).

\vspace{2mm}
\item[(ii)]
For any $\M \in M$ and $\T < \TT$,
$\mathcal{L}_{0}\mathrm{cut}_{\M}^{\T ,\TT}$
is closed and of measure zero 
(see Lemma 7.99 in \cite{CCGGIIKLLN} and Lemma 5 in \cite{MT}).

\vspace{2mm}
\item[(iii)]
The $\mathcal{L}_{0}\mathrm{cut}$ is closed.

\vspace{2mm}
\item[(iv)]
The function $L_{0}^{\T,\TT}(\M ,\MM)$ is smooth
on $\mathcal{M} \backslash \mathcal{L}_{0}\mathrm{cut}$.
\end{itemize}
\end{Prop}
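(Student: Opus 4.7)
The plan is to treat the four items in order, making repeated use of the compactness and lower semicontinuity machinery assembled in Proposition \ref{Cpt} together with the ODE theory of the $\mathcal{L}_{0}$-geodesic equation. For (i), I would apply the direct method of the calculus of variations. Take a minimizing sequence $(\gamma_{n})$ of piecewise smooth curves from $(\T, \M)$ to $(\TT, \MM)$. The lower bound in Proposition \ref{Est0}(iii) yields a uniform bound on $\int_{\T}^{\TT} |\dot{\gamma}_{n}(t)|_{g(\TT)}^{2} \D t$, so the curves are equicontinuous in $\rho_{g(T)}$ thanks to Proposition \ref{Est0}(i). Passing to a uniformly convergent subsequence via Arzel\`a--Ascoli and invoking the lower semicontinuity of $\mathscr{L}_{0}$ (see the proof of Proposition \ref{Cpt}(iii)) produces a minimizer $\gamma$. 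Smoothness follows because $\gamma$ must solve \eqref{L0geod}, an ODE with smooth coefficients.

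For (iii), which subsumes the closedness assertion in (ii), I would argue by compactness. Given $(\T_{n}, \M_{n}; \TT_{n}, \MM_{n}) \in \mathcal{L}_{0}\mathrm{cut}$ converging to $(\T, \M; \TT, \MM) \in \mathcal{M}$, choose $\delta > 0$ with $\TT - \T > \delta$ and a relatively compact $M_{0}$ containing $\M, \MM$; then for large $n$ the associated $\mathcal{L}_{0}$-minimizers lie in the compact set $\Gamma_{\delta}$ of Proposition \ref{Cpt}. Extracting convergent subsequences of minimizers and of their initial velocities, I analyze two cases: if two limit minimizers stay distinct, the limit is directly in the cut locus; if they collapse to a common limiting minimizer whose initial velocities $v_{n}^{1} \neq v_{n}^{2}$ both converge to $v$, then local non-injectivity of $\mathcal{L}_{0}\exp_{\M}^{\T,\TT}$ near $v$ forces $d(\mathcal{L}_{0}\exp_{\M}^{\T,\TT})_{v}$ to be singular via the inverse function theorem, again placing the limit in $\mathcal{L}_{0}\mathrm{cut}$.

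For the measure zero assertion in (ii), I would split $\mathcal{L}_{0}\mathrm{cut}_{\M}^{\T,\TT}$ into the critical values of $\mathcal{L}_{0}\exp_{\M}^{\T,\TT}$ and the set of points reached by more than one minimizer. The former is a null set by Sard's theorem, since the $\mathcal{L}_{0}$-exponential is smooth. For the latter, a local connecting-paths estimate based on Proposition \ref{Est}(i) together with Proposition \ref{Est0}(iii) shows that $L_{0}^{\T,\TT}(\M, \cdot)$ is locally Lipschitz; Rademacher's theorem then gives differentiability almost everywhere. But at any $\MM$ admitting two distinct minimizers $\gamma_{1} \neq \gamma_{2}$, Proposition \ref{1stDer-2} supplies two candidate gradients $\dot{\gamma}_{1}(\TT)$ and $\dot{\gamma}_{2}(\TT)$, which must differ by uniqueness for the $\mathcal{L}_{0}$-geodesic equation solved backward from $(\TT, \MM)$, contradicting differentiability. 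Hence the multiple-minimizer set is contained in the non-differentiability set, which has measure zero.

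For (iv), off the cut locus there is a unique minimizer $\gamma$ from $(\T, \M)$ to $(\TT, \MM)$ whose initial velocity $v = \dot{\gamma}(\T)$ is a regular point of $\mathcal{L}_{0}\exp_{\M}^{\T,\TT}$. The inverse function theorem, applied to the smooth assignment $(\T, \M, \TT, v) \mapsto (\T, \M, \TT, \mathcal{L}_{0}\exp_{\M}^{\T,\TT}(v))$ in a local trivialization of the tangent bundle over $\M$, recovers $v$ smoothly in $(\T, \M; \TT, \MM)$, and the defining integral then makes $L_{0}^{\T, \TT}(\M, \MM) = \mathcal{L}_{0}(\gamma_{v})$ smooth. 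The hardest step, I expect, is the multiple-minimizer half of (ii), and within it the local Lipschitz property of $L_{0}^{\T,\TT}(\M,\cdot)$: the endpoint-perturbation used to connect $\MM$ to $\MM'$ must be performed with simultaneous control on both the kinetic and scalar-curvature contributions to $\mathcal{L}_{0}$, and the time interval must be arranged so that the inserted correction segment does not spoil the comparison.
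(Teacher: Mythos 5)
Your overall plan is sound, and for items (i), (iii) and (iv) it essentially matches the paper (which omits the proofs of (i) and (iv) as standard, and for (iii) runs your inverse-function-theorem argument in contrapositive form: assume the limit point is \emph{not} in $\mathcal{L}_{0}\mathrm{cut}$ and produce two distinct minimizers there). The genuine divergence is in the measure-zero half of (ii). The paper keeps Sard's theorem for the critical values but disposes of the multiple-minimizer set $\mathcal{C}'$ by a transversality argument: two applications of the implicit function theorem (using Remark \ref{WelDef}(ii) to see the relevant differentials are nonzero) show that the pairs $(v,w)$ of distinct regular initial velocities with $\mathcal{L}_0(\gamma_v)=\mathcal{L}_0(\gamma_w)$ and $\mathcal{L}_{0}\exp_{\M}^{\T,\TT}(v)=\mathcal{L}_{0}\exp_{\M}^{\T,\TT}(w)$ form a $(d-1)$-dimensional submanifold, so $\mathcal{C}'$ is covered by countably many images of hypersurfaces. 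Your route --- local Lipschitz continuity of $L_0^{\T,\TT}(\M,\cdot)$, Rademacher, and the observation that two distinct minimizers force nondifferentiability --- is a legitimate alternative in the spirit of McCann--Topping; it covers the whole multiple-minimizer set without the regular-value restriction, at the price of the Lipschitz estimate, which (as you correctly flag) needs the time-reparametrization device of Lemma \ref{Continuity} combined with the uniform speed bound of Lemma \ref{Speed}. The paper's transversality argument avoids Lipschitz regularity entirely.

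Two places in your sketch need shoring up. First, in (iii) your dichotomy presupposes that each approximating cut point carries two distinct minimizers; if membership in $\mathcal{L}_{0}\mathrm{cut}$ is witnessed only by criticality of the (unique) minimizer's initial velocity $v_n$, there is no pair to compare, and you must add the case $v_n\to v$ with each $v_n$ critical, concluding that $v$ is critical by joint continuity of $\D\,\mathcal{L}_{0}\exp$ in all arguments (boundedness of $v_n$ coming from Lemma \ref{Speed}). Second, in (ii) you cannot invoke Proposition \ref{1stDer-2} at a point with two minimizers --- its hypotheses are exactly what fail there. What you actually need is the one-sided consequence of the first variation formula (Proposition \ref{1stVar}): for each minimizer $\gamma$ and each $w\in T_{\MM}M$, comparing $L_0$ with $\mathcal{L}_0$ of a variation fixing the initial endpoint gives
\begin{equation*}
L_0^{\T,\TT}\bigl(\M,\exp^{g(\TT)}_{\MM}(\epsilon w)\bigr)
\le
L_0^{\T,\TT}(\M,\MM)+\epsilon\langle \dot{\gamma}(\TT),w\rangle_{g(\TT)}+o(\epsilon),
\end{equation*}
so $\dot{\gamma}(\TT)$ lies in the superdifferential; at a point of differentiability the superdifferential is a singleton, which together with backward uniqueness for \eqref{L0geod} rules out two minimizers. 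With these repairs the proposal goes through.
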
%%%%%%%%%%%%%%%%%%%%%%%%%%%%%%%%%%%%%%%%

\begin{Rm} %%%%%%%%%%%%%%%%%%%%%%%%%%%%%%%%%%%%%%%
By Proposition \ref{1stVar},
any $\mathcal{L}_0$-minimizing curve must be $\mathcal{L}_0$-geodesic 
and then Proposition \ref{CutLoc}(i) shows that $\exp_{\M}^{\T,\TT}$ is surjective.
Additionally, by (iv),
the statements in Proposition \ref{1stDer}, \ref{1stDer-2} and \ref{NonPos}
hold outside $\mathcal{L}_{0}\mathrm{cut}$.
\end{Rm} %%%%%%%%%%%%%%%%%%%%%%%%%%%%%%%%%%%%%%%%%

Since they can be shown by the same arguments as the usual Riemannian geometry
or $\mathcal{L}$-geometry, we omit the proof of (i) and (iv). 
The proof of (iii) is along the same line of Lemma 5 in \cite{MT}.

\begin{proof}[{\bf Proof of Proposition \ref{CutLoc} (ii)}]%%%%%%%%%%%%%%%%
Since the closedness follows from (iii),
we prove that $\mathcal{L}_{0}\mathrm{cut}_{\M}^{\T ,\TT}$
is of measure zero. First we decompose
$
\mathcal{L}_{0}\mathrm{cut}_{\M}^{\T ,\TT}
=
\mathcal{C} \cup \mathcal{C}^{\prime}
$
where $\mathcal{C}$ is the set of all critical values in
$\mathcal{L}_{0}\mathrm{cut}_{\M}^{\T ,\TT}$
of $\mathcal{L}_{0}\exp_{\M}^{\T ,\TT}$ and
$$
\mathcal{C}^{\prime}
:=
\Big\{
\MM \in \mathcal{L}_{0}\mathrm{cut}_{\M}^{\T ,\TT}:
\begin{array}{l}
\text{$\MM$ is a regular value of $\mathcal{L}_{0}\exp_{\M}^{\T ,\TT}$ and} \\
\text{there is more than one $\mathcal{L}_{0}$-minimizing} \\
\text{curve joining $(\T ,\M)$ and $(\TT ,\MM)$.}
\end{array}
\Big\} .
$$
By Sard's theorem, $\mathcal{C}$ has measure zero and hence we need only to prove
so is also $\mathcal{C}^{\prime}$. For this, consider the map
$\phi :T_{\M} M \times T_{\M} M \to \mathbb{R}$
defined by
$$
\phi (v,w)
:=
\mathcal{L}_{0} ( \gamma_{v} ) - \mathcal{L}_{0} ( \gamma_{w} )
$$
where for each $v \in T_{\M}M$, the curve $\gamma_{v}:[\T ,\TT ] \to M$ is given by
$$
\gamma_{v}(t) := \mathcal{L}_{0} \exp_{\M}^{\T , t} (v),
\quad \T \leq t \leq \TT .
$$
Then by the first variational formula for $\mathcal{L}_{0}$
(Proposition \ref{1stVar}), we have
\begin{align*}
( \D \phi )_{(v,w)}
=
\left(\begin{array}{cc}
\displaystyle
\big\langle
	\dot{\gamma}_{v}(\TT) ,
	( \D \mathcal{L}_{0} \exp_{\M}^{\T ,\TT} )_{v} ( \cdot )
\big\rangle_{g(\TT)},
&
\displaystyle -
\big\langle
	\dot{\gamma}_{w}(\TT),
	( \D \mathcal{L}_{0} \exp_{\M}^{\T ,\TT} )_{w} ( \cdot )
\big\rangle_{g(\TT)}
\end{array}\right) .
\end{align*}
Therefore by Remark \ref{WelDef}(ii),
the implicit function theorem tells us that
$$
N :=
\Big\{
(v,w) \in T_{\M} M \times T_{\M} M :
\begin{array}{l}
\text{$\phi (v,w) = 0$, $v \neq w$;} \\
\text{both $v$ and $w$ are regular points of $\mathcal{L}_{0} \exp_{\M}^{\T ,\TT}$}
\end{array}
\Big\}
$$
is a $(2d-1)$-dimensional submanifold of $T_{\M} M \times T_{\M} M$.

Take a countable cover $\{ (U_{i},\psi_{i}) \}_{i}$ of $M$ which consists
of local coordinate neighborhoods and consider the map
$$
\xi_{i} : N_{i} := N \cap
\big[ \big( \mathcal{L}_{0}\exp_{\M}^{\T ,\TT} \big)^{-1} (U_{i}) \big]^{2} \to \mathbb{R}^{d}
$$
defined by
$$
\xi_{i} (v,w)
:=
\psi_{i} \big( \mathcal{L}_{0} \exp_{\M}^{\T ,\TT} (v) \big)
-
\psi_{i} \big( \mathcal{L}_{0} \exp_{\M}^{\T ,\TT} (w) \big) .
$$
Under the identification $T_{\MM} M \cong T_{\psi_{i}(\MM)} \mathbb{R}^{d}$
for $\MM \in U_{i}$, we have
\begin{align*}
(\D \xi_{i})_{(v,w)}
=
\left(\begin{array}{cc}
( \D \mathcal{L}_{0} \exp_{\M}^{\T ,\TT} )_{v},
&
- ( \D \mathcal{L}_{0} \exp_{\M}^{\T ,\TT} )_{w}
\end{array}\right) .
\end{align*}
Again by the implicit function theorem, we see that
$$
N_{i}^{\prime}
:=
\Big\{
(v,w) \in N_{i} :
\begin{array}{l}
\text{%
	both 
	$( \D \mathcal{L}_{0} \exp_{\M}^{\T ,\TT} )_{v}$
	and 
	$( \D \mathcal{L}_{0} \exp_{\M}^{\T ,\TT} )_{w}$
	are% 
} \\
\text{%
  non-singular and 
  $\mathcal{L}_{0} \exp_{\M}^{\T ,\TT} (v) = \mathcal{L}_{0} \exp_{\M}^{\T ,\TT} (w)$%
}
\end{array}
\Big\}
$$
is a $(d-1)$-dimensional submanifold in $T_{\M}M \times T_{\M}M$.

Now, letting
$$
\eta_{i}:N_{i}^{\prime} \ni (v,w) \mapsto \mathcal{L}_{0} \exp_{\M}^{\T ,\TT} (v) \in M,
$$
$\mathcal{C}^{\prime}$ is included
in the countable union of hypersurfaces
$\eta_{i}(N_{i}^{\prime})$ of $M$.
Therefore it has measure zero.
\end{proof}%%%%%%%%%%%%%%%%%%%%%%%%%%%%%%%%%%%%%%%

\begin{proof}[{\bf Proof of Proposition \ref{CutLoc} (iii)}]%%%%%%%%%%%%%%%
Assume that $\mathcal{L}_{0}\mathrm{cut}$ is not closed. Then we can take
a sequence $( \T_{i}, \M_{i}; \TT_{i}, \MM_{i} ) \in \mathcal{L}_{0}\mathrm{cut}$,
$i=1,2,\ldots$ which converges to some point
$( \T_{0}, \M_{0}; \TT_{0}, \MM_{0} ) \notin \mathcal{L}_{0}\mathrm{cut}$.
We denote
$
\Delta_{0}
:=
\{
	( t^{\prime}, t^{\prime\prime} )
	\in [0,T]^{2} :
	t^{\prime} < t^{\prime\prime}
\}
$.

Then the map
$$
\varphi :
\Delta_{0} \times TM
\to 
( [0,T] \times M )^2
$$
defined by
$$
\varphi
\big(
	\T , \TT , ( \M , v^{\prime} )
\big)
:=
\big(
	\T , \M ; \TT , \mathcal{L}_{0}\exp_{\M}^{\T ,\TT} ( v )
\big)
$$
is non-singular at $\big( \T_{0} , \TT_{0} , ( \M_{0} , v_{0}^{\prime} ) \big)$
where $v_{0}^{\prime} := \dot{\gamma} ( \TT_0 )$
and $\gamma$ is the unique $\mathcal{L}_{0}$-minimizing curve joining
$
(t_{0}^{\prime}, m_{0}^{\prime})
$
and
$
(t_{0}^{\prime\prime}, m_{0}^{\prime\prime})
$.
Hence by the inverse function theorem, we can take an open neighborhood $U$
of $\big( \T_{0} , \TT_{0} , ( \M_{0} , v_{0}^{\prime} ) \big)$ and
an open neighborhood $V$ of $( \T_{0}, \M_{0}; \TT_{0}, \MM_{0} )$ such that
$\varphi\vert_{U}:U\to V$ is diffeomorphic. Define $v_{i}^{\prime} \in T_{\M_{i}}M$
by
$
\big( \T_{i} , \TT_{i} , ( \M_{i} , v_{i}^{\prime} ) \big)
=
\varphi|_{U}^{-1} ( \T_{i}, \M_{i}; \TT_{i}, \MM_{i} ) .
$
We may assume that $( \T_{i}, \M_{i}; \TT_{i}, \MM_{i} ) \in V$ for all $i$
with neglecting finite numbers of points if necessary.

Note that, for each
$\big( \T , \TT , ( \M , v^{\prime} ) \big) \in U$,
$v^{\prime}$ is not a critical point
of $\mathcal{L}_{0}\exp_{\M}^{\T ,\TT}:T_{\M}M \to M$.
Therefore we can choose $w_{i}^{\prime} \in T_{\M_{i}}M$ so that
$[\T_{i} ,\TT_{i}] \ni t \mapsto  \mathcal{L}_{0}\exp_{\M_{0}}^{\T,t} (w_{i}^{\prime})$
is a $\mathcal{L}_{0}$-minimizing geodesic joining $(\T_{i}, \M_{i})$ to $(\TT_{i}, \MM_{i} )$
but $v_{i}^{\prime} \neq w_{i}^{\prime}$.
Taking a subsequence if necessary, we may assume that
$
\big( \T_{i} , \TT_{i} , ( \M_{i} , w_{i}^{\prime} ) \big)
\stackrel{i\to\infty}{\rightarrow}
\big( \T_{0} , \TT_{0} , ( \M_{0} , w_{0}^{\prime} ) \big)
$
for some $w_{0}^{\prime} \in T_{\M_{0}}M$. 
Since $\big( \T_{i} , \TT_{i} , ( \M_{i} , w_{i}^{\prime} ) \big) \notin U$ for all $i$
and $\varphi\vert_{U}:U\to V$ is diffeomorphic, we must have
$w_{0}^{\prime} \neq v_{0}^{\prime}$.

Consequently, the curves
$$
t \mapsto \mathcal{L}_{0}\exp_{\M_{0}}^{\T_{0} ,t} (v_{0}^{\prime})
\quad
\text{and}
\quad
t \mapsto \mathcal{L}_{0}\exp_{\M_{0}}^{\T_{0} ,t} (w_{0}^{\prime})
$$
must be distinct $\mathcal{L}_{0}$-minimizing curves joining $(\T_{0}, \M_{0})$
and $(\TT_{0}, \MM_{0})$,
which contradicts to
$(\T_{0}, \M_{0}; \TT_{0} ,\MM_{0} ) \notin \mathcal{L}_{0}\mathrm{cut}$.
Hence $\mathcal{L}_{0}\mathrm{cut}$ is closed.
\end{proof}%%%%%%%%%%%%%%%%%%%%%%%%%%%%%%%%%%%%%%%

We introduce the notion of $\mathcal{L}_0$-Jacobi fields. 
For a smooth curve $\gamma : [ \T , \TT ] \to M$, 
a $C^2$-vector field $V$ along $\gamma$ and $t \in [ \T , \TT ]$, 
we define a linear form $\mathscr{J}_t (V)$ as follows: 
\begin{align} \label{eq:Jacobi}
\mathscr{J}_{t} (V)
&:=
- \nabla^{g(t)}_{\dot{\gamma} (t)} \nabla^{g(t)}_{\dot{\gamma} (t)} V (t)
+
\frac12 \Hess_{g(t)} R_{g(t)} ( V(t), \cdot )
\\ 
\nonumber
&\hspace{15mm}
- \mathcal{R}_{g(t)} ( V(t), \dot{\gamma}(t) ) \dot{\gamma} (t) 
+ 2
\big(
	\nabla_{ V(t) }^{ g(t) }
	\Ric_{g(t)}
\big)
( \dot{\gamma}(t), \cdot )
\\ \nonumber
& \hspace{15mm}
+
2 \Ric_{g(t)}
\big(
      \nabla_{ V(t) }^{ g(t) } \dot{\gamma} (t),
      \cdot
\big) .
\end{align}
We call a vector field $J$ along $\gamma$ an
\textit{$\mathcal{L}_{0}$-Jacobi field} 
if $\mathscr{J}_t (J) = 0$ for each $t \in [ \T , \TT ]$. 
Note that some computation yields 
the following relation between $\mathscr{J}_t$ 
and $\mathcal{L}_0 I_\gamma$: 
\begin{equation} \label{eq:I-J}
\mathcal{L}_0 I_\gamma ( V , W ) 
= 
\left.
\big\langle
	\nabla^{g(t)}_{\dot{\gamma} (t)} V(t),
	W(t)
\big\rangle_{ g(t) }
\right\vert_{ t=t^{\prime} }^{ t=t^{\prime\prime} }
+ \int_{\T}^{\TT} 
\langle \mathscr{J}_t (V) , W (t) \rangle_{g(t)} \D t . 
\end{equation}

\begin{Lem} \label{lem:J} %%%%%%%%%%%%%%%%%%%%%%%%
Let $\gamma : [ \T , \TT ] \to M$ be an $\mathcal{L}_0$-geodesic. 
Then $\dot{\gamma} (\T)$ is a critical point of 
$\mathcal{L}_0\exp^{\T,\TT}_{\gamma (\T)}$ 
if and only if there is an $\mathcal{L}_0$-Jacobi field $J$ 
along $\gamma$ with $J (\T) = 0$, $J (\TT) = 0$ and $J \not \equiv 0$. 
\end{Lem}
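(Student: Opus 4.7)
The plan is to follow the classical Riemannian argument identifying Jacobi fields with variation vector fields of one-parameter families of geodesics, now adapted to the $\mathcal{L}_{0}$-setting.

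First, set $v := \dot{\gamma}(\T)$ and, for each $w \in T_{\gamma(\T)}M$, consider the one-parameter family of $\mathcal{L}_{0}$-geodesics
\[
\gamma_{s}(t) := \mathcal{L}_{0}\exp_{\gamma(\T)}^{\T,t}(v+sw),\qquad t \in [\T,\TT],\ s \in (-\varepsilon,\varepsilon),
\]
so $\gamma_{0}=\gamma$. Let $J_{w}(t) := \partial_{s}|_{s=0}\gamma_{s}(t)$ be the associated variation vector field. Differentiating the $\mathcal{L}_{0}$-geodesic equation \eqref{L0geod} satisfied by $\gamma_{s}$ in the parameter $s$, and using the Ricci flow equation $\partial_{t}g = -2\Ric_{g(t)}$ to handle the time-dependence of the metric, one obtains after a direct computation that $\mathscr{J}_{t}(J_{w}) = 0$; that is, $J_{w}$ is an $\mathcal{L}_{0}$-Jacobi field along $\gamma$. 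This can alternatively be extracted from the second variation formula (Proposition \ref{Alt2Var}) via the identity \eqref{eq:I-J}.

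Second, I verify the initial conditions. Since $\gamma_{s}(\T)=\gamma(\T)$ is independent of $s$, $J_{w}(\T)=0$. Since $\dot{\gamma}_{s}(\T)=v+sw$, a standard computation (performed in $g(\T)$-normal coordinates around $\gamma(\T)$, using that near $t=\T$ the map $\mathcal{L}_{0}\exp_{\gamma(\T)}^{\T,t}$ differs from the usual $g(\T)$-exponential only by lower-order terms) gives $\nabla^{g(\T)}_{\dot{\gamma}(\T)}J_{w}(\T)=w$. Since $\mathscr{J}_{t}V=0$ is a linear second-order ODE for $V$, the assignment $w \mapsto J_{w}$ is a linear bijection between $T_{\gamma(\T)}M$ and the $d$-dimensional space of $\mathcal{L}_{0}$-Jacobi fields along $\gamma$ vanishing at $\T$; in particular $J_{w}\not\equiv 0$ whenever $w\neq 0$.

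Third, by the definition of the differential of $\mathcal{L}_{0}\exp^{\T,\TT}_{\gamma(\T)}$,
\[
\bigl(D\mathcal{L}_{0}\exp_{\gamma(\T)}^{\T,\TT}\bigr)_{v}(w) = J_{w}(\TT).
\]
Hence $v$ is a critical point of $\mathcal{L}_{0}\exp_{\gamma(\T)}^{\T,\TT}$ if and only if there exists $w\neq 0$ with $J_{w}(\TT)=0$, which in turn is equivalent to the existence of a non-trivial $\mathcal{L}_{0}$-Jacobi field $J$ along $\gamma$ with $J(\T)=J(\TT)=0$. This yields both directions of the lemma at once.

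The main obstacle is the first step: checking that differentiating the $\mathcal{L}_{0}$-geodesic equation produces exactly the operator $\mathscr{J}_{t}$ defined in \eqref{eq:Jacobi}. Because the metric $g(t)$, the connection $\nabla^{g(t)}$, and the Ricci tensor all evolve in $t$, commuting $\partial_{s}$ past these objects generates several extra terms beyond those present in the classical Riemannian Jacobi equation. Careful bookkeeping, combined with $\partial_{t}g = -2\Ric_{g(t)}$ and the symmetry $[\partial_{s},\partial_{t}]=0$ for the variation, should produce precisely the additional terms $\tfrac{1}{2}\Hess_{g(t)} R_{g(t)}(V,\cdot)$, $2(\nabla^{g(t)}_{V}\Ric_{g(t)})(\dot{\gamma},\cdot)$ and $2\Ric_{g(t)}(\nabla^{g(t)}_{V}\dot{\gamma},\cdot)$ in $\mathscr{J}_{t}$, consistent with the structure of the second variation formula in Proposition \ref{Alt2Var}.
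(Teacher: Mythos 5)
Your argument is correct and follows the same strategy as the paper's proof: perturb the initial velocity of the $\mathcal{L}_0$-geodesic, observe that the resulting variational field solves the linearized equation $\mathscr{J}_t(J)=0$ with $J(\T)=0$ and $\nabla^{g(\T)}_{\dot{\gamma}(\T)}J(\T)=w$, and conclude by uniqueness for the second-order linear ODE. Two differences are worth recording. First, you handle both implications at once through the identity $\bigl(D\mathcal{L}_0\exp^{\T,\TT}_{\gamma(\T)}\bigr)_v(w)=J_w(\TT)$ and the linear isomorphism $w\mapsto J_w$ onto the space of $\mathcal{L}_0$-Jacobi fields vanishing at $\T$; the paper runs the two directions separately with the same ingredients, so your packaging is tidier but not essentially different. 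Second, and more substantively, your variation $\gamma_s(t)=\mathcal{L}_0\exp^{\T,t}_{\gamma(\T)}(v+sw)$ omits the factor $(t-\T)$ that the paper inserts in \eqref{eq:J-var}. By linearity of the differential, the paper's variational field equals $(t-\T)J_V(t)$ in your notation, which has $J(\T)=0$ \emph{and} $\nabla^{g(\T)}_{\dot{\gamma}(\T)}J(\T)=0$, hence could not be a nontrivial Jacobi field; that factor is a spurious carry-over from the classical convention $\exp_p(tv)$, and your normalization (vary the initial velocity itself, so that each $\gamma_s$ is genuinely an $\mathcal{L}_0$-geodesic) is the correct one. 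Finally, you defer the verification that differentiating \eqref{L0geod} in $s$ yields exactly the operator $\mathscr{J}_t$ of \eqref{eq:Jacobi}; the paper defers the identical computation, and your indication of which extra terms must arise is consistent with Proposition \ref{Alt2Var}, so this level of detail matches the paper's own.
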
 %%%%%%%%%%%%%%%%%%%%%%%%%%%%%%%%%%%%%%%%

\begin{proof} %%%%%%%%%%%%%%%%%%%%%%%%%%%%%%%%%%%%
Let $\dot{\gamma} (\T)$ be 
a critical point of $\mathcal{L}_0\exp_{\gamma(\T)}^{\T, \TT}$. 
It means that there is a non-zero vector
$V \in T_{ \dot{\gamma} ( t^{\prime} ) } ( T_{\gamma (t^{\prime})} M )$ 
satisfying $( \D \mathcal{L}_0\exp_{\gamma(\T)}^{\T ,\TT} )_{\dot{\gamma}(\T)} (V) = 0$. 
We consider a variation 
$f(u,t):
( - \varepsilon , \varepsilon )
\times
[ t^{\prime}, t^{\prime\prime} ]
\to
M$ 
of $\gamma$ given by 
\begin{equation} \label{eq:J-var}
f(u,t)
:=
\mathcal{L}_{0}
\exp_{ \gamma ( t^{\prime} ) }^{ t^{\prime}, t }
\big(
	\dot{\gamma} ( t^{\prime} ) + u ( t - t^{\prime} ) V
\big) .
\end{equation}
Then the vector field $J$ along $\gamma$ given by 
$
J(t)
:=
\left.
	\frac{ \mathrm{d} }{ \mathrm{d} u }
\right\vert_{ u=0 }
f(u,t)
$
is an $\mathcal{L}_0$-Jacobi field. 
Indeed, we can verify it by applying 
$\nabla_{ ( \partial / \partial u ) \vert_{u=0} f(u,t)  }^{g(t)}$
to the $\mathcal{L}_{0}$-geodesic equation
$\mathscr{G}_{t}( f(u,\cdot ) )=0$ for $f(u,\cdot )$. 
Then $J (\T) =0$ by definition and $J (\TT) = 0$ by the choice of $V$. 
In addition, $\nabla^{g(\T)}_{\dot{\gamma}(\T)} J (\T) = V$ holds 
(Here we are identifying $V$ with a vector in $T_{\gamma(\T)} M$). 
Since $V \neq 0$, we have $J \not \equiv 0$. 

Conversely, suppose that there is an $\mathcal{L}_0$-Jacobi field 
$J$ along $\gamma$ with $J (\T) = 0$, $J (\TT) \neq 0$ and $J \not \equiv 0$. 
Then $V: = \nabla^{g(\T)}_{\dot{\gamma} (\T)} J \neq 0$. 
Indeed, if $V = 0$, then $J \equiv 0$ must hold 
by the uniqueness of the solution to 
the second order linear differential equation $\mathscr{J}_t (J)=0$. 
By identifying $V$ with a vector in $T_{\dot{\gamma}(\T)} ( T_{\gamma (\T)} M )$, 
we consider a variation $f$ given by \eqref{eq:J-var}. 
Then again by the uniqueness of the solution to $\mathscr{J}_t (J) = 0$, 
the variational vector field corresponding to $f$ coincides with $J$. 
Then $J( \TT ) = 0$ means 
$( \D \mathcal{L}_0 \exp_{\gamma (\T)}^{\T , \TT} )_{\dot{\gamma} (\T)} (V) = 0$ 
and hence the conclusion holds. 
\end{proof} %%%%%%%%%%%%%%%%%%%%%%%%%%%%%%%%%%%%%%

\begin{Prop} %%%%%%%%%%%%%%%%%%%%%%%%%%%%%%%%%%%%%
\label{extension} %@@@@@@@@@@@@@@@@@@@@@@@@@@@@@@@ LABEL: extension
For each
$
(
	t^{\prime}, m^{\prime};
	t^{\prime\prime}, m^{\prime\prime}
)
\in \mathcal{M}
$,
the following two conditions are equivalent:
\begin{itemize}
\item[(i)]
$
(
	t^{\prime}, m^{\prime};
	t^{\prime\prime}, m^{\prime\prime}
)
\notin \mathcal{L}_{0}\mathrm{cut}
$.

\item[(ii)] Every $\mathcal{L}_{0}$-minimizing curve $\gamma$
joining
$( t^{\prime}, m^{\prime})$
and
$( t^{\prime\prime}, m^{\prime\prime} )$
extends to an $\mathcal{L}_{0}$-minimizing curve beyond $(\TT, \MM)$, 
that is,
there exist an $\varepsilon >0$ and a curve
$\widetilde{\gamma}: [t^{\prime} , t^{\prime\prime}+\varepsilon] \to M$
such that
$
\widetilde{\gamma} |_{ [ t^{\prime} , t^{\prime\prime} ] }
=
\gamma
$
and $\widetilde{\gamma}$ is an $\mathcal{L}_{0}$-minimizing curve
joining its endpoints.
\end{itemize}
\end{Prop}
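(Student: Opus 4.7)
The strategy is the $\mathcal{L}_0$-analogue of the classical equivalence between non-cut points and extendability of minimizing geodesics in Riemannian geometry, so I would prove the two implications separately, via a Jacobi-field argument in one direction and a compactness-plus-inverse-function argument in the other.

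For the direction (ii) $\Rightarrow$ (i), let $\widetilde{\gamma} : [t', t'' + \varepsilon] \to M$ be the given $\mathcal{L}_0$-minimizing extension of an $\mathcal{L}_0$-minimizer $\gamma$ from $(t', m')$ to $(t'', m'')$. First I would show uniqueness of the minimizer: if $\eta \ne \gamma$ were another such minimizer, the concatenation of $\eta$ with $\widetilde{\gamma}|_{[t'', t'' + \varepsilon]}$ would have the same $\mathcal{L}_0$-length as $\widetilde{\gamma}$ and hence be $\mathcal{L}_0$-minimizing; by Proposition \ref{1stVar} it must satisfy the $\mathcal{L}_0$-geodesic equation and in particular be $C^1$ at $t''$, forcing $\dot{\eta}(t'') = \dot{\gamma}(t'')$ and thus $\eta = \gamma$ by uniqueness for the ODE, a contradiction. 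Next I would rule out that $\dot{\gamma}(t')$ is a critical point of $\mathcal{L}_0 \exp^{t', t''}_{m'}$. If it were, Lemma \ref{lem:J} supplies a nontrivial $\mathcal{L}_0$-Jacobi field $J$ along $\gamma$ with $J(t') = J(t'') = 0$ and necessarily $\nabla^{g(t'')}_{\dot{\gamma}(t'')} J(t'') \ne 0$. Extending $J$ by zero past $t''$ to a broken field $\widetilde{J}$ along $\widetilde{\gamma}$, a piecewise application of \eqref{eq:I-J} yields $\mathcal{L}_0 I_{\widetilde{\gamma}}(\widetilde{J}, \widetilde{J}) = 0$, while for any smooth $Y$ along $\widetilde{\gamma}$ vanishing at $t'$ and $t'' + \varepsilon$ the same identity gives $\mathcal{L}_0 I_{\widetilde{\gamma}}(\widetilde{J}, Y) = \langle \nabla^{g(t'')}_{\dot{\gamma}(t'')} J(t''), Y(t'') \rangle_{g(t'')}$. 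Choosing $Y(t'') = -\nabla^{g(t'')}_{\dot{\gamma}(t'')} J(t'')$, mollifying $\widetilde{J} + sY$ near $t''$, and invoking Proposition \ref{2ndVar} together with Proposition \ref{Alt2Var}, I obtain for small $s > 0$ a proper variation of $\widetilde{\gamma}$ with strictly negative second variation, contradicting $\mathcal{L}_0$-minimality.

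For the direction (i) $\Rightarrow$ (ii), the unique $\mathcal{L}_0$-minimizer $\gamma$ extends as an $\mathcal{L}_0$-geodesic $\widetilde{\gamma} : [t', T] \to M$ by Remark \ref{WelDef}(i). Since $\dot{\gamma}(t')$ is a regular point of $\mathcal{L}_0 \exp^{t', t''}_{m'}$, continuous dependence of the $\mathcal{L}_0$-geodesic flow and its differential on the terminal time yields $\varepsilon_0 > 0$ and an open neighborhood $\mathcal{V}$ of $\dot{\gamma}(t')$ on which $\mathcal{L}_0 \exp^{t', t'' + \varepsilon}_{m'}$ is a diffeomorphism onto its image for every $\varepsilon \in [0, \varepsilon_0]$. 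I would then claim that $\widetilde{\gamma}|_{[t', t'' + \varepsilon]}$ is $\mathcal{L}_0$-minimizing for $\varepsilon > 0$ sufficiently small. If not, pick $\varepsilon_n \downarrow 0$ and $\mathcal{L}_0$-minimizers $\eta_n : [t', t'' + \varepsilon_n] \to M$ from $m'$ to $\widetilde{\gamma}(t'' + \varepsilon_n)$ with $\dot{\eta}_n(t') \ne \dot{\widetilde{\gamma}}(t')$. Proposition \ref{Cpt} extracts a convergent subsequence of $\iota(\eta_n)$; Lemma \ref{Continuity} identifies the $\mathcal{L}_0$-length of the limit as $L_0^{t', t''}(m', m'')$, so uniqueness forces the limit to be $\gamma$. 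The equi-Lipschitz estimate of Proposition \ref{Cpt}(i) bounds $|\dot{\eta}_n(t')|_{g(T)}$ uniformly, and combining this with continuous dependence of the $\mathcal{L}_0$-geodesic ODE on initial data upgrades uniform convergence to $C^1$ convergence, so $\dot{\eta}_n(t') \to \dot{\gamma}(t')$. For large $n$ both $\dot{\eta}_n(t')$ and $\dot{\widetilde{\gamma}}(t')$ lie in $\mathcal{V}$ and share the same image $\widetilde{\gamma}(t'' + \varepsilon_n)$ under $\mathcal{L}_0 \exp^{t', t'' + \varepsilon_n}_{m'}$, contradicting injectivity on $\mathcal{V}$.

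The main obstacle is the Jacobi-field/corner-smoothing step in (ii) $\Rightarrow$ (i): one has to check that the extra Ricci- and scalar-curvature-derivative terms appearing in \eqref{eq:I-J} and Proposition \ref{Alt2Var} (absent in the classical Riemannian setting) do not spoil the piecewise integration-by-parts identity for $\mathcal{L}_0 I_{\widetilde{\gamma}}(\widetilde{J}, Y)$, and that mollifying the broken field $\widetilde{J} + sY$ preserves the sign of the index form. A secondary technical point in (i) $\Rightarrow$ (ii) is upgrading uniform convergence of the $\eta_n$ to $C^1$ convergence of $\dot{\eta}_n(t')$, which is what allows local injectivity of $\mathcal{L}_0 \exp$ to close the argument.
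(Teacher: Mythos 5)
Your proposal is correct and follows essentially the same route as the paper: the (ii)$\Rightarrow$(i) direction is the paper's argument verbatim (concatenation plus ODE uniqueness for uniqueness of the minimizer, then the broken Jacobi field $J+aV$ with index form $-2a|\nabla_{\dot\gamma}J(t''-)|^2+O(a^2)$ via \eqref{eq:I-J}), and the (i)$\Rightarrow$(ii) direction is the paper's direct argument recast as a contradiction, with Proposition \ref{Cpt} plus ODE continuity playing the role of the paper's boundedness of $w_\varepsilon$ and lower semicontinuity of $\mathcal{L}_0$. The technical points you flag (piecewise use of the index form at the corner, and uniform local injectivity of $\mathcal{L}_0\exp^{t',t''+\varepsilon}_{m'}$ in $\varepsilon$) are handled in the paper exactly as you suggest and do not constitute gaps.
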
 %%%%%%%%%%%%%%%%%%%%%%%%%%%%%%%%%%%%%%%
\begin{proof} %%%%%%%%%%%%%%%%%%%%%%%%%%%%%%%%%%%%
Assume that (i) holds.
If $\gamma$ is an $\mathcal{L}_{0}$-minimizing curve joining
$( t^{\prime}, m^{\prime})$
and
$( t^{\prime\prime}, m^{\prime\prime} )$,
then $\gamma$ is unique among such curves
and satisfies the $\mathcal{L}_{0}$-geodesic equation.
Since the map
$
T_{m^{\prime}}M \ni w
\mapsto
\mathcal{L}_{0}
\exp_{ m^{\prime} }^{ t^{\prime}, t^{\prime\prime} } (w)
\in M
$
is regular at $\dot{\gamma}(t^{\prime})$,
there exists an open neighborhood $U$ of $\dot{\gamma}(t^{\prime})$
such that the map is diffeomorphic on $U$ to its image.

We extend $\gamma$ forward in time,
to $\widetilde{\gamma} : [ t^{\prime}, t^{\prime\prime} + \varepsilon_{0} ] \to M$
by solving the same $\mathcal{L}_{0}$-geodesic equation
($\varepsilon_{0} >0$ being small enough).
Since $\mathcal{L}_{0}\mathrm{cut}$ is closed (Proposition \ref{CutLoc}(iii)),
we can assume that
$
\widetilde{\gamma} [ t^{\prime\prime}, t^{\prime\prime} + \varepsilon_{0} ]
\cap
\mathcal{L}_{0}
\mathrm{cut}_{
		\widetilde{\gamma} ( t^{\prime} )
	}^{
		t^{\prime} ,
		t^{\prime\prime} + \varepsilon_{0}
	}
=
\emptyset
$
for sufficiently small $\varepsilon_{0} >0$.
For each $\varepsilon \in (0, \varepsilon_{0} ]$,
we define $w_{\varepsilon} \in T_{ \gamma ( t^{\prime} ) } M$
to be the tangent vector
$\dot{\gamma}_{\varepsilon}(t^{\prime})$ of the unique
$\mathcal{L}_{0}$-minimizing curve $\gamma_{\varepsilon}$ joining
$
( t^{\prime}, \gamma ( t^{\prime} ) )
$
and
$
( t^{\prime\prime} + \varepsilon , \gamma ( t^{\prime\prime} + \varepsilon ) )
$.
By Proposition \ref{CutLoc}(iv) and Proposition \ref{1stDer-2}, 
$L_{0}$ is smooth around
$
(
	t^{\prime}, m^{\prime};
	t^{\prime\prime} + \varepsilon ,
	\gamma ( t^{\prime\prime} + \varepsilon )
)
$
and
$$
\vert
	w_{\varepsilon}
\vert_{ g( t^{\prime} ) }^{2}
=
2
\frac{
	\partial L_{0}^{ t^{\prime}, t^{\prime\prime} + \varepsilon }
}{
	\partial t^{\prime}
}
\big(
	m^{\prime} ,
	\gamma ( t^{\prime\prime} + \varepsilon )
\big)
+
R_{ g( t^{\prime} ) } ( m^{\prime} ),
$$
so that
$
\{
	w_{\varepsilon}
\}_{ 0 < \varepsilon \leq \varepsilon_{0} }
$
is a bounded set.
Therefore, taking a subsequence, we can assume that
$w_{\varepsilon}$ converges to a vector
$w \in T_{ \dot{\gamma} ( t^{\prime} ) }M$
as $\varepsilon \downarrow 0$.

Then we show that $w_{\varepsilon} \in U$ for a sufficiently
small $\varepsilon >0$.
Since $\gamma_{\varepsilon}(t)$ smoothly depends on $w_{\varepsilon}$
and $\mathcal{L}_0$ is lower semi-continuous 
(cf.~the proof of Proposition \ref{Cpt}),
by using Lemma \ref{Continuity},
we have
\begin{equation*}
\begin{split}
\mathcal{L}_{0} ( \gamma )
&=
L_{0}^{ t^{\prime}, t^{\prime\prime} }
\big(
	\gamma ( t^{\prime} ),
	\gamma ( t^{\prime\prime} )
\big) \\
&=
\lim_{\varepsilon \to 0}
L_{0}^{ t^{\prime}, t^{\prime\prime} }
\big(
	\gamma_{\varepsilon} ( t^{\prime} ),
	\gamma_{\varepsilon} ( t^{\prime\prime} + \varepsilon )
\big) \\
&=
\lim_{\varepsilon \to 0}
\mathcal{L}_{0} ( \gamma_{ \varepsilon } )
\geq
\mathcal{L}_{0} ( \gamma_{ w } )
\geq
\mathcal{L}_{0} ( \gamma ),
\end{split}
\end{equation*}
where
$
\gamma_{w} : [ t^{\prime}, t^{\prime\prime} ] \to M
$
is the $\mathcal{L}_{0}$-geodesic of initial conditions
$
\gamma_{w} ( t^{\prime} ) = m^{\prime}
$
and
$
\dot{\gamma}_{w} ( t^{\prime} ) = w
$.
It clearly holds that
$
\gamma_{w} ( t^{\prime\prime} ) = m^{\prime\prime}
$.
Therefore, $\gamma_{w}$ is also an $\mathcal{L}_{0}$-minimizing curve joining
$( t^{\prime}, m^{\prime} )$
and
$( t^{\prime\prime}, m^{\prime\prime} )$.
By the uniqueness, we have $\gamma = \gamma_{w}$,
so that $\dot{\gamma}( t^{\prime} ) = w$.
Thus $w_{\varepsilon} \in U$ for a sufficiently small $\varepsilon >0$.

Now, for a sufficiently small $\varepsilon > 0$,
we have $\dot{\gamma} (t^{\prime})$, $w_{\varepsilon} \in U$ and
\begin{equation*}
\begin{split}
\mathcal{L}_{0}
\exp_{
	\gamma ( t^{\prime} )
}^{
	t^{\prime}, t^{\prime\prime} + \varepsilon
}
( \dot{\gamma} ( t^{\prime} ) )
=
\widetilde{\gamma} ( t^{\prime\prime} + \varepsilon )
=
\gamma_{w} ( t^{\prime\prime} + \varepsilon )
=
\mathcal{L}_{0}
\exp_{
	\gamma ( t^{\prime} )
}^{
	t^{\prime}, t^{\prime\prime} + \varepsilon
}
( w_{\varepsilon} ),
\end{split}
\end{equation*}
which implies $\dot{\gamma}(t^{\prime}) = w_{\varepsilon}$,
so that $\widetilde{\gamma} = \gamma_{\varepsilon}$.
Hence the curve $\widetilde{\gamma}$ is uniquely $\mathcal{L}_{0}$-minimizing.

Conversely, assume that (ii) holds.
Suppose that $\gamma$ and $\eta$ are two $\mathcal{L}_{0}$-minimizing curves
joining
$( t^{\prime}, m^{\prime} )$
and
$( t^{\prime\prime}, m^{\prime\prime} )$.
By the assumption (ii), $\gamma$ has an $\mathcal{L}_{0}$-minimizing
extension
$
\widetilde{\gamma}:
[ t^{\prime} , t^{\prime\prime} + \varepsilon ]
\to
M
$.
Then the piecewise smooth curve $c$ defined by
$$
c (t)
:=
\left\{\begin{array}{ll}
\eta (t) & \text{if $t \in [ t^{\prime}, t^{\prime\prime} ]$,} \\
\widetilde{\gamma} (t) & \text{if $t \in [ t^{\prime\prime}, t^{\prime\prime} + \varepsilon ]$,}
\end{array}\right.
$$
which is the concatenation of $\eta$ and
$
\widetilde{\gamma}
\vert_{ [ t^{\prime}, t^{\prime\prime} + \varepsilon ] }
$,
must be also $\mathcal{L}_{0}$-minimizing.
A standard variational argument shows that
the curve $c(t)$ becomes $C^{1}$ at $t=t^{\prime\prime}$
and must be $\mathcal{L}_{0}$-geodesic
(so that $c$ is found to be smooth).
By the uniqueness result
(with respect to the initial conditions
at time $t^{\prime\prime}+\varepsilon $)
of the ODE theory, we must have $\widetilde{\gamma}=c$,
in particular, we have $\gamma = \eta$.
Therefore, the $\mathcal{L}_{0}$-minimizing geodesic
joining
$( t^{\prime}, m^{\prime} )$
and
$( t^{\prime\prime}, m^{\prime\prime} )$
must be unique.

Next, we show that
$
\mathcal{L}_{0}
\exp_{ m^{\prime} }^{ t^{\prime}, t^{\prime\prime} }
$
is not singular at $\dot{\gamma} ( t^{\prime} ) \in T_{m^{\prime}}M$.
Suppose on the contrary, that $\dot{\gamma} ( t^{\prime} )$
is a critical point of
$
\mathcal{L}_{0}
\exp_{ m^{\prime} }^{ t^{\prime}, t^{\prime\prime} }
$.
Then there is an $\mathcal{L}_0$-Jacobi field along $\gamma$ 
with $J (\T) = 0$, $J (\TT) = 0$ and $J \not \equiv 0$ 
by Lemma \ref{lem:J}. 
Note that $\nabla^{g(\TT)}_{\gamma (\TT)} J ( \TT ) \neq 0$ holds 
(cf.~the proof of Lemma \ref{lem:J}). 
We take an $\mathcal{L}_{0}$-minimizing extension
$
\widetilde{\gamma}:
[ t^{\prime}, t^{\prime\prime} + \varepsilon ]
\to
M
$
of $\gamma$ and extend $J$ to a piecewise smooth vector field
on $[ t^{\prime}, t^{\prime\prime} + \varepsilon ]$
(which we denote again by $J$)
by requiring that
$
J\vert_{ [ t^{\prime\prime}, t^{\prime\prime} + \varepsilon ] } \equiv 0
$.
We further take a vector field $V$ 
along $\widetilde{\gamma}$ with 
$
V( t^{\prime} ) = 0
$,
$
V( t^{\prime\prime} ) = - \nabla^{g(\TT)}_{\dot{\gamma} (\TT)} J ( t^{\prime\prime}- )
$
and
$
V( t^{\prime\prime} + \varepsilon ) = 0
$,
and then consider any {\it proper} variation
$
g(u,t):
( -\delta , \delta )
\times
[ t^{\prime}, t^{\prime\prime} ]
\to
M
$
of $\widetilde{\gamma}$, with the variational vector field
$
W = J + a V
$
($a > 0$).
Since $W$ vanishes at the endpoints of $\widetilde{\gamma}$,
such a proper variation exists.
By a piecewise use of the second variational formula (Proposition \ref{2ndVar})
together with \eqref{eq:I-J} and the symmetry of $\mathcal{L}_0 I_\gamma$,
we have 
\begin{equation*}
\begin{split}
&
( \delta_{W} \delta_{W} \mathcal{L}_{0} )
( \widetilde{\gamma} )
=
-2a
\vert
	\nabla^{g(\TT)}_{\dot{\gamma}(\TT)} J ( t^{\prime\prime}- )
\vert_{ g( t^{\prime\prime} ) }^{2}
+
a^{2}
( \delta_{V} \delta_{V} \mathcal{L}_{0} )
( \widetilde{\gamma} )
\end{split}
\end{equation*}
which is negative for sufficiently small $a>0$.
On the other hand, since $\widetilde{\gamma}$ is $\mathcal{L}_{0}$-minimizing
and $g$ is proper,
it must hold that
$$
\mathcal{L}_{0} ( \widetilde{\gamma} )
=
\mathcal{L}_{0} ( g( 0, \cdot ) )
\leq
\mathcal{L}_{0} ( g( u, \cdot ) ),
\quad\text{for each $u \in ( -\delta , \delta )$}
$$
which implies
$
( \delta_{W} \delta_{W} \mathcal{L}_{0} )
( \widetilde{\gamma} )
=
\left.
	( \mathrm{d}^{2} / \mathrm{d} u^{2} )
\right\vert_{ u=0 }
\mathcal{L}_{0} ( g( u, \cdot ) )
\geq 0
$.
This is a contradiction.
Therefore $\dot{\gamma} ( t^{\prime} )$ is not a critical point of
$
\mathcal{L}_{0}
\exp_{m^{\prime}}^{ t^{\prime}, t^{\prime\prime} }
$.

Hence we have proved (ii)$\Rightarrow$(i).
\end{proof} %%%%%%%%%%%%%%%%%%%%%%%%%%%%%%%%%%%%%%

Finally we study relations 
in the time-reversal and $\mathcal{L}_0$-cut locus. 
Let $\widetilde{g} (\tau) = g ( T - \tau )$. 
Then $\widetilde{g}(\tau)$ evolves under the backward Ricci flow
$$
\frac{ \partial }{ \partial \tau } \widetilde{g} (\tau )
=
2 \Ric_{ \widetilde{g}( \tau ) }.
$$
For $\tau^{\prime} < \tau^{\prime\prime}$,
we consider the corresponding functional
$$
\widetilde{\mathcal{L}}_{0} ( c )
:=
\frac{1}{2}
\int_{ \tau^{\prime} }^{ \tau^{\prime\prime} } \hspace{-3mm}
\big\{
	\vert \dot{c} (\tau ) \vert_{ \widetilde{g} (\tau ) }^{2}
	+
	R_{ \widetilde{g} (\tau ) }
	( c (\tau ) )
\big\}
\mathrm{d} \tau ,
$$
where $c : [ \tau^{\prime}, \tau^{\prime\prime} ] \to M$
is any curve.
For $x^{\prime} \in M$ we define 
the $\widetilde{\mathcal{L}}_0$-exponential map 
$
\widetilde{\mathcal{L}}_{0}
\exp_{ x^{\prime} }^{ \tau^{\prime}, \tau^{\prime\prime} }:
T_{ x^{\prime} } M \to M
$
by
$$
\widetilde{\mathcal{L}}_{0}
\exp_{ x^{\prime} }^{ \tau^{\prime}, \tau^{\prime\prime} }
(w) := \eta ( \tau^{\prime\prime} )
$$
where $\eta$ is the solution to the $\widetilde{\mathcal{L}}_{0}$-geodesic equation
\begin{equation}
\label{B-L0geod} %@@@@@@@@@@@@@@@@@@@@@@@@@@@@@@@@ LABEL: B-L0geod
\begin{split}
\left\{\begin{array}{l}
\displaystyle
\nabla_{ \dot{\eta}(\tau ) }^{ \widetilde{g}(\tau ) } \dot{\eta}(\tau )
-
\frac{1}{2} \nabla^{ \widetilde{g}(\tau ) } R_{ \widetilde{g}( \tau ) }
+
2 \Ric_{ \widetilde{g}(\tau ) } ( \dot{ \eta } ( \tau ), \cdot ) = 0, \\
\eta ( \tau^{\prime} ) = x^{\prime},
\quad
\dot{ \eta } ( \tau^{\prime} ) = w . 
\end{array}\right.
\end{split}
\end{equation}
One can see that this is actually the Euler-Lagrange equation for $\widetilde{\mathcal{L}}_{0}$.
Take $\tau^{\prime} , \tau^{\prime\prime} , \T , \TT \in [ 0 , T ]$ with 
$\tau^{\prime} < \tau^{\prime\prime}$, $\tau^{\prime} = T-  t^{\prime\prime}$
and
$
t^{\prime\prime} - t^{\prime}
=
\tau^{\prime\prime} - \tau^{\prime}
$. 
For a curve $\eta : [ \tau^{\prime} , \tau^{\prime\prime} ] \to M$, 
we define 
$
\gamma : [ t^{\prime}, t^{\prime\prime} ] \to M
$
by
$
\gamma ( t^{\prime} + s ) = \eta ( \tau^{\prime\prime} - s )
$
for
$
0 \leq s \leq t^{\prime\prime} - t^{\prime}
=
\tau^{\prime\prime} - \tau^{\prime}
$. 
We call $\gamma$ the time-reversal of $\eta$. 
By definition, $\widetilde{\mathcal{L}}_0 (\eta) = \mathcal{L}_0 (\gamma)$ holds. 
In addition, by comparing \eqref{B-L0geod} with \eqref{L0geod}, 
we can easily show that $\gamma$ is an $\mathcal{L}_0$-geodesic 
if and only if $\eta$ is an $\widetilde{\mathcal{L}}_0$-geodesic. 

\begin{Prop} %%%%%%%%%%%%%%%%%%%%%%%%%%%%%%%%%%%%%
\label{B-Analogy} %@@@@@@@@@@@@@@@@@@@@@@@@@@@@@@@ LABEL: B-Analogy
Let $\tau^{\prime} , \tau^{\prime\prime} , \T , \TT \in [ 0 , T ]$ with 
$\tau^{\prime} < \tau^{\prime\prime}$, $\tau^{\prime} = T-  t^{\prime\prime}$
and
$
t^{\prime\prime} - t^{\prime}
=
\tau^{\prime\prime} - \tau^{\prime}
$. 

\begin{itemize}
\item[(i)]
There is more than one $\mathcal{L}_{0}$-minimizing curve
joining $( t^{\prime}, x )$ and
$( t^{\prime\prime}, y )$
iff there is more than one $\widetilde{\mathcal{L}}_{0}$-minimizing curve
joining $( \tau^{\prime}, y )$
and
$( \tau^{\prime\prime}, x )$.

\item[(ii)] 
Let $\eta : [ \tau^\prime , \tau^{\prime\prime} ] \to M$ be 
an $\widetilde{\mathcal{L}}_0$-minimizing curve 
and $\gamma$ its time-reversal. 
Then the vector $\dot{\gamma}( t^{\prime} )$ is a critical point of 
$
\mathcal{L}_{0}
\exp_{ \gamma ( t^{\prime} ) }^{ t^{\prime}, t^{\prime\prime} }
$
iff
$\dot{\eta} ( \tau^{\prime} )$ is a critical point of
$
\widetilde{\mathcal{L}}_{0}
\exp_{ \eta ( \tau^{\prime} ) }^{ \tau^{\prime}, \tau^{\prime\prime} }
$.

\item[(iii)] Define
$
\widetilde{\mathcal{L}}_{0}
\mathrm{cut}_{x^{\prime}}^{\tau^{\prime} , \tau^{\prime\prime}}
$
as the set of all points $x^{\prime\prime} \in M$ such that
there is more than one $\widetilde{\mathcal{L}}_{0}$-minimizing curve
joining $(\tau^{\prime}, x^{\prime})$ and $( \tau^{\prime\prime}, x^{\prime\prime} )$
or there is a $w \in T_{x^{\prime}}M$ such that
$
x^{\prime\prime}
=
\widetilde{\mathcal{L}}_{0}
\exp_{x^{\prime}}^{ \tau^{\prime}, \tau^{\prime\prime} }
(w)
$
and $w$ is a critical point of
$
\widetilde{\mathcal{L}}_{0}
\exp_{x^{\prime}}^{ \tau^{\prime}, \tau^{\prime\prime} }
$.
We also define
$$
\widetilde{\mathcal{L}}_{0}
\mathrm{cut}^{ \tau^{\prime}, \tau^{\prime\prime} }
:=
\big\{
	( x^{\prime}, x^{\prime\prime} ):
	x^{\prime\prime}
	\in
	\widetilde{\mathcal{L}}_{0}
	\mathrm{cut}_{x^{\prime}}^{\tau^{\prime}, \tau^{\prime\prime}}
\big\}.
$$
Then the map
$
M \times M \ni (x,y) \mapsto (y,x) \in M \times M
$
gives an isomorphism between
$
\mathcal{L}_{0}
\mathrm{cut}^{ t^{\prime}, t^{\prime\prime} }
$
and
$
\widetilde{\mathcal{L}}_{0}
\mathrm{cut}^{ \tau^{\prime}, \tau^{\prime\prime} }
$.

\end{itemize}
\begin{itemize}
\item[(iv)]
$
( x^{\prime}, x^{\prime\prime} )
\notin
\widetilde{\mathcal{L}}_{0}
\mathrm{cut}^{ \tau^{\prime}, \tau^{\prime\prime} }
$ if and only if $\widetilde{\mathcal{L}}_{0}$-minimizing curve joining
$( \tau^{\prime}, x^{\prime})$
and
$( \tau^{\prime\prime}, x^{\prime\prime} )$
can be extended beyond $( \tau^{\prime\prime}, x^{\prime\prime} )$ 
with keeping its minimality.
\end{itemize}
\end{Prop}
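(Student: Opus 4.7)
The plan is to leverage a single explicit bijection between curves and then transfer each structural notion across it. Set $t' := T - \tau''$ and $t'' := T - \tau'$ and define, for a curve $\eta : [\tau', \tau''] \to M$, its time-reversal $\gamma : [t', t''] \to M$ by $\gamma(t' + s) := \eta(\tau'' - s)$ for $0 \le s \le \tau'' - \tau'$. A straightforward change of variables using $g(t' + s) = \widetilde{g}(\tau'' - s)$ and $\dot{\gamma}(t' + s) = -\dot{\eta}(\tau'' - s)$ gives $\mathcal{L}_0(\gamma) = \widetilde{\mathcal{L}}_0(\eta)$, and a direct comparison of the connections (together with the sign flip of the velocity, which converts the $-2\Ric$ term in \eqref{L0geod} into the $+2\Ric$ term in \eqref{B-L0geod}) shows $\gamma$ solves \eqref{L0geod} if and only if $\eta$ solves \eqref{B-L0geod}. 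Since the endpoint correspondence reads $\gamma(t') = \eta(\tau'') = x''$ and $\gamma(t'') = \eta(\tau') = x'$, $\mathcal{L}_0$-minimizers from $(t', x'')$ to $(t'', x')$ are in bijection with $\widetilde{\mathcal{L}}_0$-minimizers from $(\tau', x')$ to $(\tau'', x'')$, proving (i).

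For (ii) the plan is to transfer the conjugate-point characterization via Lemma \ref{lem:J}. First I would derive the Jacobi equation $\widetilde{\mathscr{J}}_\tau(\tilde{J}) = 0$ for $\widetilde{\mathcal{L}}_0$ by repeating the derivation of \eqref{eq:Jacobi} with $\widetilde{g}$ in place of $g$ (the only change of signs in \eqref{eq:Jacobi} comes from the sign flip of the Ricci-related terms forced by \eqref{B-L0geod}). Then I would check that the pull-back $J(t' + s) := \tilde{J}(\tau'' - s)$ satisfies $\mathscr{J}_t(J) = 0$ if and only if $\tilde{J}$ satisfies $\widetilde{\mathscr{J}}_\tau(\tilde{J}) = 0$: the second-derivative term is invariant since $(d/ds)^2 = (d/d\tau)^2$, while each first-order (in $\dot{\gamma}$) term changes sign under $\dot{\gamma} \leftrightarrow -\dot{\eta}$, matching the sign flips in \eqref{B-L0geod}. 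Because $J$ vanishes at $t \in \{t', t''\}$ exactly when $\tilde{J}$ vanishes at $\tau \in \{\tau', \tau''\}$, Lemma \ref{lem:J} and its $\widetilde{\mathcal{L}}_0$-analog establish the equivalence.

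Parts (iii) and (iv) are then short. For (iii), unwinding the definition of the cut locus and combining (i) (for the non-uniqueness part) with (ii) (for the critical-point part) shows that $(x', x'') \in \widetilde{\mathcal{L}}_0\mathrm{cut}^{\tau', \tau''}$ if and only if $(x'', x') \in \mathcal{L}_0\mathrm{cut}^{t', t''}$, which is exactly what the swap map $(x, y) \mapsto (y, x)$ realizes. For (iv), the bijection converts forward extension beyond $(\tau'', x'')$ into backward extension before $(t', x'')$ for the corresponding $\mathcal{L}_0$-minimizer. A backward-in-time version of Proposition \ref{extension} is then needed, which I would prove by a verbatim repeat of the argument for Proposition \ref{extension}: the inputs used there (closedness of $\mathcal{L}_0\mathrm{cut}$ from Proposition \ref{CutLoc}(iii), smoothness of $L_0$ off the cut locus from Proposition \ref{CutLoc}(iv), the speed bound of Proposition \ref{Est}(iii), Lemma \ref{Continuity}, and lower semi-continuity of $\mathcal{L}_0$) are all symmetric in the endpoints, so the same construction extends before $t'$. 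Combining this backward-extension version with (iii) yields (iv).

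The main obstacle I anticipate is the careful bookkeeping in step (ii): each term in \eqref{eq:Jacobi} contains either the velocity linearly, quadratically, or not at all, so I must verify that the sign changes induced by $\dot{\gamma} = -\dot{\eta}$ exactly match the sign flips in the Ricci-type terms of the backward Jacobi operator $\widetilde{\mathscr{J}}_\tau$. Once this is verified, everything else follows mechanically; if the direct bookkeeping turns out to be delicate, a conceptually cleaner alternative is to work at the level of the second variation formula \eqref{2ndVarform} (which is manifestly time-reversal-invariant after adjusting signs via the bijection) and use \eqref{eq:I-J} to pass back to the Jacobi operator.
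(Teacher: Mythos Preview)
Your proposal is correct and follows essentially the same approach as the paper: both rely on the explicit time-reversal bijection, the fact that it intertwines $\mathcal{L}_0$- and $\widetilde{\mathcal{L}}_0$-geodesics, and the transfer of Jacobi fields via Lemma~\ref{lem:J} to handle the conjugate-point criterion. The only cosmetic difference is in (iv): the paper simply says the argument of Proposition~\ref{extension} goes through verbatim for the $\widetilde{\mathcal{L}}_0$-functional, whereas you first pass through the bijection and then invoke a backward-in-time version of Proposition~\ref{extension} for $\mathcal{L}_0$---these are the same argument viewed from two sides, and the paper's phrasing saves you the extra translation step.
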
%%%%%%%%%%%%%%%%%%%%%%%%%%%%%%%%%%%%%%%%
\begin{proof} %%%%%%%%%%%%%%%%%%%%%%%%%%%%%%%%%%%%
The claim (i) is obvious by remarks just before Proposition \ref{B-Analogy}. 
For (ii), we introduce the notion of $\widetilde{\mathcal{L}}_0$-Jacobi field: 
For $\eta : [ \tau^{\prime} , \tau^{\prime\prime} ] \to M$, 
$\widetilde{J}$ is an $\widetilde{\mathcal{L}}_0$-Jacobi field if 
$\widetilde{\mathscr{J}}_\tau (V) = 0$, where a linear form $\widetilde{\mathscr{J}}_\tau$ 
is defined by replacing $\gamma (t)$, $\dot{\gamma}(t)$ 
and $g(t)$ in \eqref{eq:Jacobi} 
with $\eta (\tau)$, $\dot{\eta} (\tau)$ and $\widetilde{g} (\tau)$ respectively 
and changing the sign of all terms involving the Ricci curvature. 
Then the criticality of $\widetilde{\mathcal{L}}_0$-exponential map is 
also characterized by $\widetilde{\mathcal{L}}_0$-Jacobi fields 
as in Lemma \ref{lem:J} by the same argument. 
Moreover, a vector field $\widetilde{J}$ along $\eta$ is 
an $\widetilde{\mathcal{L}}_0$-Jacobi field
if and only if a vector field $J$ along the time-reversal of $\eta$ 
given by $J (\T + s) := \widetilde{J} ( \tau^{\prime\prime} - s )$ is 
an $\mathcal{L}_0$-Jacobi field. 
Then the conclusion easily follows 
by combining these observations with Lemma \ref{lem:J}. 
The assertion (iii) follows immediately from (i) and (ii). 
The assertion (iv) can be proved in the same way 
as Proposition \ref{extension}. 
\end{proof} %%%%%%%%%%%%%%%%%%%%%%%%%%%%%%%%%%%%%%

%%%%%%%%%%%%%%%%%%%%%%%%%%%%%%%%%%%%%%%%%%%%%%%%%%
\section{Construction of a coupled Brownian motions in the absence of $\mathcal{L}_{0}$-cut locus}
\label{AbsL0Cut} %%%%%%%%%%%%%%%%%%%%%%%%%%%%%%%%% LABEL: AbsL0Cut
%%%%%%%%%%%%%%%%%%%%%%%%%%%%%%%%%%%%%%%%%%%%%%%%%%
In this section, 
we will show Theorem \ref{Thm1} under the assumption that 
$\mathcal{L}_0 \mathrm{cut}$ is empty
where $\mathcal{L}_0 \mathrm{cut}$ is defined 
in section \ref{sec:L0cut}. 

Under $\mathcal{L}_0 \mathrm{cut} = \emptyset$,
$L_0^{\T, \TT} ( \M , \MM )$ is a smooth function of 
$( \T ,\M ; \TT , \MM )$
(Theorem \ref{CutLoc}(iv)), 
and for each $\M$, $\MM \in M$ and $\T < \TT$, we can take a unique minimizer
for $\mathcal{L}_{0}$ joining $(\T ,\M )$ to $(\TT ,\MM )$.
Therefore, for each pair of points, 
the space-time parallel transport 
as a map between their tangent spaces 
is uniquely determined. 
In the following, we construct a coupled Brownian motions
by space-time parallel transport, 
introduced in Definition \ref{STP}.

Let $e_{1},e_{2},\ldots ,e_{d}\in \Real^{d}$ be the canonical basis of $\Real^{d}$.
We denote by $\pi :\mathscr{F}(M) \to M$ the frame bundle over $M$ and by
$\pi :\mathscr{O}^{g(t)}(M) \to M$ 
the orthonormal frame bundle with respect to the metric $g(t)$.
Note that $\mathscr{O}^{g(t)}(M)$ varies along our Ricci flow but not for $\mathscr{F}(M)$.
We canonically have the map defined by
$$
\mathscr{F}(M)_{m} \times \Real^{d} \ni (u,x) \mapsto u.x := u(x) \in T_{m}M,
$$
where $\mathscr{F} (M)_{m}$ stands for 
the fiber at $m$ of $\mathscr{F} (M)$. 

Recall our notations on time parameters 
$\tau^{\prime} (s)$, $\tau^{\prime\prime} (s)$
given in section \ref{sec:intro}: 
We sometimes drop $s$ and simply write $\tau^{\prime}$ or $\tau^{\prime\prime}$.
Let $(U_{s},V_{s})_{0 \leq s\leq S} = (U_{s}(u^{\prime}),V_{s}(v^{\prime\prime}))_{0\leq s\leq S}$
be the solution to the SDE
\begin{align*}
\left\{\begin{array}{l}
\displaystyle
\D U_{s}
=
\sqrt{2} H_{i}^{g(\B)} (U_{s}) \circ \D W_{s}^{i}
- \sum_{\alpha ,\beta =1}^{d} \frac{\partial g}{\partial t} (\B)
\Big( U_{s}.e_{\alpha} , U_{s}.e_{\beta} \Big) \mathcal{V}^{\alpha ,\beta} (U_{s}) \D s, \\
%%%
\displaystyle
\D V_{s}
=
\sqrt{2} H_{i}^{g(\BB)} (V_{s}) \circ \D B_{s}^{i}
- \sum_{\alpha ,\beta =1}^{d} \frac{\partial g}{\partial t} (\BB)
\Big( V_{s}.e_{\alpha} , V_{s}.e_{\beta} \Big) \mathcal{V}^{\alpha ,\beta} (V_{s}) \D s, \\
%%%
\displaystyle
\D B_{s} = V_{s}^{-1} \para_{\pi U_{s}, \pi V_{s}}^{\B ,\BB} U_{s} \D W_{s} ,
\end{array}\right.
\end{align*}
starting from
$
(U_{0} , V_{0}) = (u^{\prime}, v^{\prime\prime})
\in \mathscr{O}^{g(\T_{1})}(M)_{\M} \times \mathscr{O}^{g(\TT_{1})}(M)_{\MM}
$,
where
$(H_{i}^{g(t)})_{i=1}^{d}$ is the
{\it system of canonical horizontal vector fields} on $\mathscr{F}(M)$
associated with $g(t)$, 
and $(\mathcal{V}^{\alpha ,\beta})_{\alpha ,\beta =1}^{d}$ is the
{\it system of canonical vertical vector fields},
each of which is defined by
$$
\left.
\mathcal{V}^{\alpha ,\beta} f(u)
=
\frac{\D}{\D \varepsilon}
\right\vert_{\varepsilon =0}
\hspace{-5mm}
f( u\circ \mathrm{e}^{ \varepsilon e_{\alpha} \otimes e_{\beta} } ),
\quad
f\in C^{\infty}(\mathscr{F}(M)) .
$$
It is known that the random variable $(U_{s},V_{s})$ takes its value in
$
\mathscr{O}^{ g( \tau^{\prime}(s) ) } (M)
\times
\mathscr{O}^{ g( \tau^{\prime\prime}(s) ) } (M)
$
for every $0 \leq s \leq S$
(see \cite[Proposition 1.1]{ACT2}).
We put $(X_{s}, Y_{s}) := (\pi U_{s}, \pi V_{s})$.

The next statement is an It\^o formula for the process $(X,Y)=(X_{s},Y_{s})_{0\leq s\leq S}$.
We omit the proof because it is straightforward.
\begin{Prop}%%%%%%%%%%%%%%%%%%%%%%%%%%%%%%%%%%%%%%
For any smooth function $f(s,\M ,\MM )$ on $[0,\infty ) \times M \times M$, we have
\begin{align*}
& \D f(s,X_{s},Y_{s}) \\
&=
\Big\{
\frac{\partial f}{\partial s} (s,X_{s},Y_{s})
+ \sum_{i=1}^{d} \Big[ \Hess_{g(\B) \oplus g(\BB)} f(s,\cdot ,\cdot ) \Big]
\Big( U_{s}.e_{i} \oplus V_{s}.e_{i}^{*}, U_{s}.e_{i} \oplus V_{s}.e_{i}^{*} \Big)
\Big\} \D s \\
&\hspace{15mm}+
[ U_{s}.e_{i} \oplus V_{s}.e_{i}^{*} ] f(s,\cdot ,\cdot ) \bullet \D W_{s}^{i} ,
\end{align*}
where $e_{i}^{*} := V_{s}^{-1} \para_{X_{s},Y_{s}}^{\B ,\BB} U_{s} .e_{i} \in \Real^{d}$
for each $i=1,2,\ldots ,d$ and $\bullet \D W_{s}^{i}$ means the It\^o integral.
\end{Prop}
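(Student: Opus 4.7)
The plan is to lift $f$ to the product of frame bundles. Define $\tilde f(s, u, v) := f(s, \pi u, \pi v)$ on $[0, \infty) \times \mathscr{F}(M) \times \mathscr{F}(M)$ and apply the Stratonovich form of It\^o's formula to $\tilde f(s, U_s, V_s)$. Because $\tilde f$ depends on $(u, v)$ only through $(\pi u, \pi v)$, every vertical vector field annihilates it; in particular the drift terms involving $\partial g/\partial t$ and $\mathcal{V}^{\alpha,\beta}$ in the SDE contribute nothing. What remains is
\begin{equation*}
\D \tilde f(s, U_s, V_s) = \partial_s \tilde f\, \D s + \sqrt{2}\, H_i^{g(\B)}(U_s) \tilde f \circ \D W_s^i + \sqrt{2}\, H_i^{g(\BB)}(V_s) \tilde f \circ \D B_s^i.
\end{equation*}

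Next I would convert the two Stratonovich integrals into It\^o integrals. Since $U_s, V_s$ are orthonormal frames for the respective metrics and $\para_{X_s, Y_s}^{\B, \BB}$ is an isometry, the matrix $V_s^{-1}\para_{X_s, Y_s}^{\B, \BB} U_s$ is orthogonal, so $\D[B^i, B^j]_s = \delta_{ij}\D s$ and $\D[W^i, B^j]_s = (V_s^{-1}\para U_s)^j_{\;i}\,\D s$. The Stratonovich-to-It\^o correction then contributes the drift
\begin{equation*}
\sum_i H_i^{g(\B)} H_i^{g(\B)} \tilde f \;+\; \sum_j H_j^{g(\BB)} H_j^{g(\BB)} \tilde f \;+\; 2 \sum_{i,k} (V_s^{-1}\para U_s)^k_{\;i}\, H_k^{g(\BB)} H_i^{g(\B)} \tilde f.
\end{equation*}

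The central step is to recognize this as a Hessian trace on $(M \times M, g(\B) \oplus g(\BB))$. Since the horizontal frame is parallel with respect to the corresponding Levi-Civita connection, the standard identity gives $H_i^{g(\B)} H_j^{g(\B)} \tilde f(u, v) = \Hess_{g(\B)} f(s, \cdot, \pi v)(u.e_i, u.e_j)$ and analogously for the second factor, while the mixed term $H_k^{g(\BB)} H_i^{g(\B)} \tilde f(u, v)$ equals $\Hess_{g(\B)\oplus g(\BB)} f(s, \cdot , \cdot)\bigl((u.e_i, 0), (0, v.e_k)\bigr)$ because the two differentiations act on independent factors and the product Levi-Civita connection on $M \times M$ decouples. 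Using the defining identity $V_s.e_i^* = V_s\bigl(V_s^{-1} \para_{X_s, Y_s}^{\B,\BB} U_s\, e_i\bigr) = \para_{X_s, Y_s}^{\B,\BB}(U_s.e_i)$, the three sums collapse into
\begin{equation*}
\sum_{i=1}^d \bigl[\Hess_{g(\B) \oplus g(\BB)} f(s, \cdot, \cdot)\bigr]\bigl( U_s.e_i \oplus V_s.e_i^*,\; U_s.e_i \oplus V_s.e_i^* \bigr),
\end{equation*}
where one uses that $\{U_s.e_i \oplus V_s.e_i^*\}_{i=1}^d$ is $g(\B)\oplus g(\BB)$-orthonormal (again by the isometry property of $\para$). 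The martingale part is handled in parallel: after substituting $\D B^j_s = (V_s^{-1}\para U_s)^j_{\;i}\D W^i_s$, the coefficient of $\D W^i_s$ collapses to $[(U_s.e_i) \oplus (V_s.e_i^*)]f$, matching the claim. I expect the only subtle point to be the bookkeeping of the Stratonovich-to-It\^o correction in the presence of two correlated drivers $W$ and $B$; once their cross-covariation is tracked correctly, the telescoping into the product Hessian is immediate from the orthonormality.
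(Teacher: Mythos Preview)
Your argument is correct and is exactly the standard frame-bundle computation the paper has in mind; the paper itself omits the proof entirely, saying only that it is ``straightforward,'' so there is nothing to compare against beyond noting that your route is the natural one.

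One small slip: the family $\{U_s.e_i \oplus V_s.e_i^*\}_{i=1}^d$ is \emph{orthogonal} with each vector of squared norm $2$ in $g(\B)\oplus g(\BB)$, not orthonormal. This does not affect your argument, since the collapsing of the three sums into $\sum_i \Hess f(U_s.e_i\oplus V_s.e_i^*,\,U_s.e_i\oplus V_s.e_i^*)$ only uses bilinearity of the Hessian together with the fact that $\{V_s.e_i^*\}_i$ is a $g(\BB)$-orthonormal basis of $T_{Y_s}M$ (which is what the isometry property of $\para$ actually gives, and what lets you replace $\sum_j \Hess f(0\oplus V_s.e_j,\,0\oplus V_s.e_j)$ by $\sum_i \Hess f(0\oplus V_s.e_i^*,\,0\oplus V_s.e_i^*)$).
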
%%%%%%%%%%%%%%%%%%%%%%%%%%%%%%%%%%%%%%%%

\begin{Cor}%%%%%%%%%%%%%%%%%%%%%%%%%%%%%%%%%%%%%%%
Suppose $\mathcal{L}_0 \mathrm{cut} = \emptyset$. Then we have the following: 
\begin{align*}
& \D L_{0}^{\B(s) , \BB(s)}(X_{s},Y_{s}) \\
&=
- 
\Big\{
\frac{\partial L_{0}^{t' ,t''}}{\partial t'}
+ \frac{\partial L_{0}^{t' , t''}}{\partial t''}
\Big\} 
\Big|_{(t', t'') = ( \B(s) , \BB (s) )}
(X_{s},Y_{s}) \D s \\
&\hspace{10mm}
+ \sum_{i=1}^{d} \Big[ \Hess_{g(\B) \oplus g(\BB)} L_{0}^{\B ,\BB} \Big]
\Big( U_{s}.e_{i} \oplus V_{s}.e_{i}^{*}, U_{s}.e_{i} \oplus V_{s}.e_{i}^{*} \Big)
\D s \\
&\hspace{5mm}+
[ U_{s}.e_{i} \oplus V_{s}.e_{i}^{*} ] L_{0}^{\B ,\BB} \bullet \D W_{s}^{i} .
\end{align*}
\end{Cor}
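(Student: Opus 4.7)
The plan is to apply the preceding It\^o formula directly to the function $f : [0, S] \times M \times M \to \mathbb{R}$ defined by
$f(s, m', m'') := L_0^{\tau'(s), \tau''(s)}(m', m'')$,
where $\tau'(s) = t_1' - s$ and $\tau''(s) = t_1'' - s$. In other words, the corollary is nothing but the preceding proposition specialized to this particular choice of $f$, so almost everything is bookkeeping once smoothness is in place.

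First I would verify that $f$ is smooth. Under the hypothesis $\mathcal{L}_0 \mathrm{cut} = \emptyset$, Proposition \ref{CutLoc}(iv) yields that $(t', m'; t'', m'') \mapsto L_0^{t', t''}(m', m'')$ is smooth on all of $\mathcal{M}$; since $\tau'$ and $\tau''$ are affine in $s$, the composition $f$ is smooth on $[0, S] \times M \times M$, so the preceding It\^o formula is indeed applicable to $f$.

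Next I would compute $\partial_s f$ by the chain rule. Since $\dot{\tau}'(s) = \dot{\tau}''(s) = -1$, we obtain
\[
\frac{\partial f}{\partial s}(s, m', m'')
= - \Big\{
\frac{\partial L_0^{t', t''}}{\partial t'} + \frac{\partial L_0^{t', t''}}{\partial t''}
\Big\}(m', m'') \Big|_{(t', t'') = (\tau'(s), \tau''(s))} ,
\]
which supplies the first term of the claimed identity, including the overall minus sign. The remaining terms follow immediately, because, for each fixed $s$, the spatial Hessian of $f(s, \cdot, \cdot)$ at $(m', m'')$ coincides with $\Hess_{g(\tau') \oplus g(\tau'')} L_0^{\tau', \tau''}(m', m'')$, and the first-order It\^o-martingale term is the gradient expression $[U_s.e_i \oplus V_s.e_i^*] L_0^{\tau', \tau''}$ inherited verbatim from the preceding proposition.

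There is no serious mathematical obstacle here: the only delicate point is the sign produced by the time reversal $\tau = t_1 - s$ in the chain rule. All the substantive stochastic content---the space-time parallel transport coupling the driving noises, the curvature correction $\partial_t g = -2\Ric$ appearing in the SDE for $(U_s, V_s)$, and the use of the product metric $g(\tau') \oplus g(\tau'')$ for the Hessian---has already been absorbed into the It\^o formula of the preceding proposition, and nothing new needs to be computed beyond the trivial chain rule in $s$.
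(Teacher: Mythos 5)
Your proposal is correct and is exactly the argument the paper intends (the paper omits the proof as immediate from the preceding It\^o formula): one applies that formula to $f(s,m',m'')=L_{0}^{\tau'(s),\tau''(s)}(m',m'')$, which is smooth by Proposition \ref{CutLoc}(iv) since $\mathcal{L}_{0}\mathrm{cut}=\emptyset$ and since $\tau'(s)<\tau''(s)$ holds with a fixed positive gap, and the chain rule in $s$ with $\dot{\tau}'=\dot{\tau}''=-1$ produces the first term with its minus sign. Nothing further is needed.
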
%%%%%%%%%%%%%%%%%%%%%%%%%%%%%%%%%%%%%%%%%

Since $\mathcal{L}_{0}\mathrm{cut} = \emptyset$,
by using the result in Proposition \ref{NonPos},
the stochastic process
$L_{0}^{ \tau^{\prime}(s), \tau^{\prime\prime}(s) } (X_{s},Y_{s})$
is a semi-martingale whose bounded variation part is non-positive.
Therefore we can conclude that
$s \mapsto L_{0}^{ \tau^{\prime}(s), \tau^{\prime\prime}(s) } (X_{s},Y_{s})$
is a supermartingale after we prove the integrability. 
For proving the integrability, we consider a family of functions 
$\varphi_n \in C^2 (\mathbb{R})$ with $\varphi_n (x) \uparrow x$ 
as $n \to \infty$ for each $x \in \mathbb{R}$. 
Suppose in addition that $\varphi_n$ is nondecreasing, 
concave and bounded from above. Since $L_0$ is bounded from below, 
we can easily show that 
$\varphi_n ( L_{0}^{ \tau^{\prime} (s), \tau^{\prime\prime} (s)} (X_{s},Y_{s}) )$ 
is a supermartingale. 
Then the integrability will be ensured by the monotone convergence theorem 
and the fact $L_0^{\tau^{\prime} (0) , \tau^{\prime\prime} (0)} ( X_0 , Y_0 )$ is deterministic.  
The proof of the integrability in the next section will go along the same idea 
but we will show it together with the rigorous proof of Theorem \ref{Thm1}.

%%%%%%%%%%%%%%%%%%%%%%%%%%%%%%%%%%%%%%%%%%%%%%%%%%
\section{Supermartingale property of a coupled Brownian motion
in the presence of $\mathcal{L}_{0}$-cut locus}
\label{PreL0Cut} %%%%%%%%%%%%%%%%%%%%%%%%%%%%%%%%% LABEL: PreL0Cut
%%%%%%%%%%%%%%%%%%%%%%%%%%%%%%%%%%%%%%%%%%%%%%%%%%

%%%%%%%%%%%%%%%%%%%%%%%%%%%%%%%%%%%%%%%%%%%%%
\subsection{Coupling via approximation by geodesic random walks}
%%%%%%%%%%%%%%%%%%%%%%%%%%%%%%%%%%%%%%%%%%%%%
For the construction of a suitable coupling of Brownian motions in the presence of
$\mathcal{L}_{0}$-cut locus, we use the approximation by geodesic random walks
to avoid the technical difficulty coming from  the singularity of $L_{0}$.
Indeed $L_0$ is smooth outside $\mathcal{L}_0\mathrm{cut}$ (Proposition \ref{CutLoc}(iv))
but not on $\mathcal{L}_0\mathrm{cut}$. 
To carry out this procedure, we will rely on some basic properties 
of $\mathcal{L}_0\mathrm{cut}$ summarized in section \ref{sec:L0cut}. 

We fix a measurable section (or selection)
$$
\gamma : (\T ,\M ; \TT ,\MM ) \mapsto \gamma_{\M ,\MM}^{\T ,\TT}
$$
of minimal $\mathcal{L}_{0}$-geodesics, where, the measurability is with respect to
the Borel $\sigma$-field generated by the uniform topology on the path space.
Since $\mathcal{L}_{0}$-minimizing curves with fixed endpoints
form a compact set
(Proposition \ref{Cpt}),
the existence of such a section is ensured by a measurable selection theorem 
(e.g.~\cite[Theorem 6.9.6]{B} and Proposition \ref{Cpt}).
We further fix a measurable section $\sigma (t, \cdot ) :M \to \mathscr{O}^{g(t)}(M)$
of $g(t)$-orthonormal frame bundle for each $t\in [\T_{0} ,\T_{1}]$.

To construct a geodesic random walk, we prepare an $\Real^{d}$-valued
i.i.d. sequence $(\lambda_{n})_{n=1}^{\infty}$, each of which is uniformly distributed
on the unit ball in $\Real^{d}$ centered at the origin. Since it will be needed
when working with these $\lambda$'s, we shall summarize necessary formulae
as follows. 
We omit the proof because it is obvious or 
an easy consequence of the divergence theorem. 

\begin{Lem} \label{UnifExp}%%%%%%%%%%%%%%%%%%%%%%%
Let $V$ be an $n$-dimensional real Euclidean space.
Let $\ell :V\to \Real$ be a linear function and
$\alpha :V\times V \to \Real$ be a symmetric $2$-form on $V$.
Let $B^{n}$ is the unit ball in $V$ centered at origin. Then
\begin{itemize}
\item[(i)] $\displaystyle
\int_{B^{n}} \hspace{-3mm} \ell (x) \D x = 0
$
and
$\displaystyle
\int_{B^{n}} \hspace{-3mm} \alpha (x,x) \D x = \frac{\vol (B^{n})}{n+2} \mathrm{tr}\, \alpha ,
$ \\
where in the last equality, we have naturally regarded $\alpha$ as the linear homomorphism
$V\to V^{*} \cong V$.
\end{itemize}
Suppose further that we are given
another $n$-dimensional real vector space $W$,
a linear function $f: V\oplus W \to \Real$,
a symmetric $2$-form $\beta$ on $V\oplus W$
and a linear homomorphism $A:V\to W$. 
Then 
\begin{itemize}
\item[(ii)] $\displaystyle
\int_{B^{n}} \hspace{-3mm} f(x\oplus Ax) \D x = 0
$
and

\vspace{2mm}
\item[(iii)] $\displaystyle
\int_{B^{n}} \hspace{-3mm} \beta
\big( x \oplus A x, x \oplus A x \big) \D x
=
\frac{ \mathrm{vol} (B^{n}) }{ n+2 } \sum_{i=1}^{n} \beta
\big( e_{i} \oplus A e_{i}, e_{i} \oplus A e_{i} \big)
$ \\
where $(e_{i})_{i=1}^{n}$ is an any orthonormal basis of $V$.
\end{itemize}
\end{Lem}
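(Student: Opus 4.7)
The plan is to derive all three statements from elementary symmetry properties of the ball $B^n$, together with one explicit radial integral.

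For (i), the first identity is immediate from antipodal symmetry: the substitution $x \mapsto -x$ preserves Lebesgue measure on $B^n$ but flips the sign of $\ell(x)$. For the trace identity, I would fix an orthonormal basis $(e_i)_{i=1}^{n}$ of $V$ and write $\alpha(x,x) = \sum_{i,j} \alpha_{ij} x_i x_j$ with $\alpha_{ij} := \alpha(e_i,e_j)$. Antipodal symmetry again kills the off-diagonal cross-terms, while rotational invariance of $B^n$ forces $\int_{B^n} x_i^{2} \, dx$ to be independent of $i$, hence equal to $\frac{1}{n} \int_{B^n} |x|^{2} \, dx$. A short calculation in polar coordinates, or equivalently the divergence theorem applied to the vector field $|x|^{2} x$ (whose divergence is $(n+2)|x|^{2}$) as hinted by the authors, yields $\int_{B^n} |x|^{2} \, dx = \frac{n}{n+2} \vol(B^n)$; summing $\alpha_{ii}$ produces $\mathrm{tr}\,\alpha$ and the claim follows.

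For (ii), I would simply decompose $f(x \oplus Ax) = f(x \oplus 0) + f(0 \oplus Ax)$. Each summand is a linear functional in $x \in V$, so part (i) applies to each and gives vanishing integral.

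For (iii), I would introduce the linear map $T : V \to V \oplus W$ defined by $T(x) := x \oplus Ax$ and pull $\beta$ back to a symmetric $2$-form $\widetilde{\beta}$ on $V$ via $\widetilde{\beta}(x,y) := \beta(Tx, Ty)$. Applying part (i) to $\widetilde{\beta}$ gives $\int_{B^n} \widetilde{\beta}(x,x) \, dx = \frac{\vol(B^n)}{n+2} \mathrm{tr}\,\widetilde{\beta}$, and since $\mathrm{tr}\,\widetilde{\beta} = \sum_{i=1}^{n} \beta(Te_i, Te_i) = \sum_{i=1}^{n} \beta(e_i \oplus Ae_i, e_i \oplus Ae_i)$, this is precisely the asserted identity. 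There is no genuine obstacle in this lemma: everything reduces to antipodal symmetry, rotational invariance, and a single elementary radial integral, which is why the authors feel justified in omitting the proof.
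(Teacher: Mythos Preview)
Your approach matches exactly what the paper has in mind: the authors omit the proof entirely, saying only that ``it is obvious or an easy consequence of the divergence theorem,'' and you have faithfully unpacked precisely that argument (symmetry for the linear parts, rotational invariance plus the divergence theorem for the radial integral, and the pullback via $T$ for (iii)).

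One small slip to fix: antipodal symmetry $x \mapsto -x$ does \emph{not} kill the off-diagonal terms $\int_{B^n} x_i x_j\,\D x$ for $i\neq j$, since $(-x_i)(-x_j)=x_ix_j$. The correct symmetry is reflection in a single coordinate hyperplane, e.g.\ $x_i \mapsto -x_i$ with the remaining coordinates fixed, which preserves $B^n$ and Lebesgue measure but sends $x_ix_j$ to $-x_ix_j$. With that correction the argument is complete.
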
%%%%%%%%%%%%%%%%%%%%%%%%%%%%%%%%%%%%%%%%%

Now for each $\M$, $\MM \in M$ and $\varepsilon >0$, 
let us construct a coupling 
$(X_{s}^{\varepsilon} , Y_{s}^{\varepsilon})_{0\leq s \leq S}$
of geodesic random walks starting from $(\M ,\MM)$ by
\begin{align*}
&
\left\{\begin{array}{l}
\displaystyle
X_{s}^{\varepsilon} := \exp_{X_{0}^{\varepsilon}}^{g(\B(0))}
\Big(
\frac{\sqrt{2}s}{\varepsilon} \sigma ( \B(0), X_{0}^{\varepsilon}).
\sqrt{d+2} \lambda_{1}
\Big) , 
\vspace{1ex} \\
\displaystyle
Y_{s}^{\varepsilon} := \exp_{Y_{0}^{\varepsilon}}^{g(\BB(0))}
\Big(
\frac{\sqrt{2}s}{\varepsilon} \para_{ X_{0}^{\varepsilon}, Y_{0}^{\varepsilon} }^{ \B(0),\BB(0) }
\sigma ( \B(0) , X_{0}^{\varepsilon} ).
\sqrt{d+2} \lambda_{1}
\Big) ,
\end{array}\right.
\text{for $0\leq s \leq s_{1}$,} \\
&\hspace{14mm}\vdots \\
&
\left\{\begin{array}{l}
\displaystyle
X_{s}^{\varepsilon} := \exp_{X_{s_{n}}^{\varepsilon}}^{g(\B(s_{n}))}
\Big(
\frac{\sqrt{2}s - \sqrt{2}s_{n}}{\varepsilon}
\sigma ( \B(s_{n}) , X_{s_{n}}^{\varepsilon} ).
\sqrt{d+2} \lambda_{n+1}
\Big), 
\vspace{1ex} \\
\displaystyle
Y_{s}^{\varepsilon} := \exp_{Y_{s_{n}}^{\varepsilon}}^{g(\BB(s_{n}))}
\Big(
\frac{ \sqrt{2}s - \sqrt{2}s_{n} }{\varepsilon} \para_{ X_{s_{n}}^{\varepsilon}, Y_{s_{n}}^{\varepsilon} }^{ \B(s_{n}),\BB(s_{n}) }
\sigma ( \B(s_{n}) , X_{s_{n}}^{\varepsilon} ).
\sqrt{d+2} \lambda_{n+1}
\Big)
\end{array}\right. \\
&\hspace{100mm}\text{for $s_{n} \leq s \leq s_{n+1}$} \\
&\hspace{14mm}\vdots
\end{align*}
where $X_{0}^{\varepsilon} := \M$, $Y_{0}^{\varepsilon} :=\MM$ and
$s_{n} := ( n\varepsilon^{2} ) \wedge S$ for each $n=0,1,2,\ldots$.
From Lemma \ref{UnifExp}, we see that the factor $\sqrt{d+2}$ is the normalization
constant in the sense of that
$
(d+2) \mathbf{E} \big[ \lambda_{n} \otimes \lambda_{n} \big]
=
\mathrm{id}_{\Real^{d}}.
$

We shall give a remark here. From the definition, the random curves 
$s\mapsto X_{s}^{\varepsilon}$ and $s\mapsto Y_{s}^{\varepsilon}$
are clearly piecewise smooth and then we see that
$X_{s_{n}}^{\varepsilon}$ and $Y_{s_{n}}^{\varepsilon}$ are
$\sigma \big( \lambda_{k}: 1\leq k\leq n  \big)$-measurable
and $\dot{X}_{s_{n}+}^{\varepsilon}$ and $\dot{Y}_{s_{n}+}^{\varepsilon}$ are
$\sigma \big( \lambda_{k}: 1\leq k\leq n+1  \big)$-measurable but not
$\sigma \big( \lambda_{k}: 1\leq k\leq n  \big)$-measurable.

As shown in \cite{Ku2}, each of $X^{\varepsilon} = (X_{s}^{\varepsilon})_{0\leq s\leq S}$
and $Y^{\varepsilon} = (Y_{s}^{\varepsilon})_{0\leq s\leq S}$ converges in law
to $g(\B(s))$-Brownian motion starting from $\M$
and
$g(\BB(s))$-Brownian motion starting from $\MM$ 
respectively. As a result, the collection
$\{ (X^{\varepsilon}, Y^{\varepsilon}) \}_{\varepsilon >0}$
of couplings forms a tight family and hence we can find a convergent subsequence
of $(X^{\varepsilon}, Y^{\varepsilon})_{\varepsilon >0}$.
We fix such a subsequence and denote the subsequence by the same notation
$(X^{\varepsilon}, Y^{\varepsilon} )_{\varepsilon >0}$
for simplicity. 
We denote the limit by $( X, Y ) = ( X_s , Y_s )_{0 \le s \le S}$.

%%%%%%%%%%%%%%%%%%%%%%%%%%%%%%%%%%%%%%%%%%%%%
\subsection{Supermartingale property}%%%%%%%%
%%%%%%%%%%%%%%%%%%%%%%%%%%%%%%%%%%%%%%%%%%%%%

Let $\mathcal{G}_{0}$ be
the trivial $\sigma$-field and
$\mathcal{G}_{n} := \sigma \big( \lambda_{k}: 1\leq k \leq n \big)$
for each $n=1,2,\cdots$.

\begin{Prop} \label{DiffIneq}%%%%%%%%%%%%%%%%%%%%%
Set
$
\Lambda_{s}^{\varepsilon}
:= L_{0}^{ \B(s) , \BB(s) }( X_{s}^{\varepsilon}, Y_{s}^{\varepsilon} )
$.
For each relatively compact open set $M_{0}$ in $M$ including 
$X_{0}^{\varepsilon} = \M$ and $Y_{0}^{\varepsilon} =\MM$,
there is $\varepsilon_0 > 0$ which depends only on $M_{0}$ such that 
the following holds: 
For each $0 < \varepsilon \le \varepsilon_0$, 
there are 
a family $(Q_{n}^{\varepsilon})_{n=1}^{\infty}$ of random variables
and a deterministic constant
$\delta (\varepsilon)$ such that 
\begin{equation}
\label{VI} %@@@@@@@@@@@@@@@@@@@@@@@@@@@@@@@@@@@@@@ LABEL: VI
\Lambda_{s_{n+1}}^{\varepsilon} \leq \Lambda_{s_{n}}^{\varepsilon}
+ \varepsilon \zeta_{n+1}^{\varepsilon}
+ \varepsilon^2 \Sigma_{n+1}^{\varepsilon}
+ Q_{n+1}^{\varepsilon}
\end{equation}
with the estimate
\begin{equation}
\label{RT} %@@@@@@@@@@@@@@@@@@@@@@@@@@@@@@@@@@@@@@ LABEL: RT
\sum_{n:\ s_{n} < \sigma _{M_{0}} ( X^{\varepsilon}, Y^{\varepsilon} ) \wedge S}
Q_{n}^{\varepsilon} \leq \delta (\varepsilon) \to 0
\quad\text{as $\varepsilon \to 0$.}
\end{equation}
where
$\zeta_{n}^{\varepsilon}$ and $\Sigma_{n}^{\varepsilon}$
are $\mathcal{G}_{n}$-measurable and integrable random variables
such that
\begin{equation} \label{eq:E_var}
\mathbf{E}
[
	\hspace{0.5mm}
	\zeta_{n}^{\varepsilon}
	\hspace{0.5mm}
	\vert
	\hspace{0.5mm}
	\mathcal{G}_{n-1}
	\hspace{0.5mm}
] = 0
\quad
\text{and}
\quad
\overline{\Sigma}_{n}^{\varepsilon}
:=
\mathbf{E}
[
	\hspace{0.5mm}
	\Sigma_{n}^{\varepsilon}
	\hspace{0.5mm}
	\vert
	\hspace{0.5mm}
	\mathcal{G}_{n-1}
	\hspace{0.5mm}
] \leq 0
\end{equation}
for each $n \geq 1$, and
$$
\sigma_{M_{0}} ( w^{\prime}, w^{\prime\prime})
:=
\inf\big\{
s>0 : ( w^{\prime}(s) , w^{\prime\prime} (s) ) \notin M_{0} \times M_{0}
\big\}
$$
for each
$
( w^{\prime}, w^{\prime\prime})
\in C( [0,S] \to M \times M )
$.
\end{Prop}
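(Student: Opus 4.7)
My plan is to establish (\ref{VI}) by constructing, at each step, an explicit admissible curve whose $\mathcal{L}_{0}$-length upper-bounds $\Lambda_{s_{n+1}}^{\varepsilon}$, then Taylor-expanding that length in the step size $\varepsilon$. This strategy bypasses the non-smoothness of $L_{0}$ on $\mathcal{L}_{0}\mathrm{cut}$, which is the chief obstacle to a direct It\^o-type expansion; it parallels the approach of \cite{KP2,Ku2}.

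At step $s_{n}$ let $\gamma_{n} := \gamma_{X_{s_{n}}^{\varepsilon}, Y_{s_{n}}^{\varepsilon}}^{\tau'(s_{n}),\tau''(s_{n})}$ be the $\mathcal{L}_{0}$-minimizing curve furnished by the fixed measurable section, and let $\para$ denote the associated space-time parallel transport along $\gamma_{n}$ (Definition \ref{STP}). I construct $c^{\varepsilon}\colon [\tau'(s_{n+1}),\tau''(s_{n+1})]\to M$ joining $X_{s_{n+1}}^{\varepsilon}$ and $Y_{s_{n+1}}^{\varepsilon}$ by time-shifting the parametrization of $\gamma_{n}$ by $-\varepsilon^{2}$ and superimposing a smooth variation whose variational vector field $V$ has endpoint values $V(\tau'(s_{n+1})) = v_{n}$ and $V(\tau''(s_{n+1})) = \para\,v_{n}$, where $v_{n} := \sqrt{2}\,\varepsilon\,\sqrt{d+2}\,\sigma(\tau'(s_{n}),X_{s_{n}}^{\varepsilon})\lambda_{n+1}$ is the random step vector defining $X_{s_{n+1}}^{\varepsilon}$ (its parallel transport defines $Y_{s_{n+1}}^{\varepsilon}$). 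By the definition of $L_{0}$ as an infimum, $\Lambda_{s_{n+1}}^{\varepsilon} \le \mathcal{L}_{0}(c^{\varepsilon})$. I then Taylor-expand $\mathcal{L}_{0}(c^{\varepsilon})$. The $-\varepsilon^{2}$ time-shift contributes $-\varepsilon^{2}(\partial_{t'} L_{0} + \partial_{t''} L_{0})(X_{s_{n}}^{\varepsilon}, Y_{s_{n}}^{\varepsilon}) + O(\varepsilon^{4})$, obtained by a change of variables together with $\partial_{t} g = -2\Ric$ and $\partial_{t} R = \Delta R + 2|\Ric|^{2}$ (Proposition \ref{formula}). The spatial variation along $V$ contributes (via Propositions \ref{1stVar} and \ref{2ndVar}) an $O(\varepsilon)$ first-variation boundary term linear in $\lambda_{n+1}$ (the interior term vanishes since $\gamma_{n}$ is an $\mathcal{L}_{0}$-geodesic) plus the $O(\varepsilon^{2})$ $\mathcal{L}_{0}$-index form quadratic in $\lambda_{n+1}$. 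These define $\zeta_{n+1}^{\varepsilon}$, $\Sigma_{n+1}^{\varepsilon}$, and the remainder $Q_{n+1}^{\varepsilon}$ respectively.

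By Lemma \ref{UnifExp}(ii), $\mathbf{E}[\zeta_{n+1}^{\varepsilon} \mid \mathcal{G}_{n}] = 0$, and by Lemma \ref{UnifExp}(iii) the conditional expectation of $\Sigma_{n+1}^{\varepsilon}$ equals
\[
\sum_{i=1}^{d} \bigl[\Hess_{g(\tau')\oplus g(\tau'')} L_{0}^{\tau',\tau''}\bigr](u_{i} \oplus \para u_{i}, u_{i} \oplus \para u_{i}) - (\partial_{t'} L_{0} + \partial_{t''} L_{0}),
\]
which is $\le 0$ by Proposition \ref{NonPos} (whose proof actually applies to any $\mathcal{L}_{0}$-minimizing curve $\gamma_{n}$, irrespective of whether $L_{0}$ is smooth at its endpoints, since it estimates the second variation of an explicit admissible variation rather than differentiating $L_{0}$). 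The hard part is controlling $Q_{n+1}^{\varepsilon}$ uniformly on the event $\{s_{n} < \sigma_{M_{0}}(X^{\varepsilon}, Y^{\varepsilon}) \wedge S\}$: I expect $|Q_{n+1}^{\varepsilon}| \le C(M_{0})\,\varepsilon^{3}$ there. This rests on a priori bounds on $\gamma_{n}$: its confinement to an enlargement $B \supset M_{0}$ (Lemma \ref{Min}), a uniform upper bound on $\sup_{t}|\dot{\gamma}_{n}(t)|_{g(t)}^{2}$ (Lemma \ref{Speed} together with the $L_{0}$-upper bound in Remark \ref{PDS}), and bounds on the curvature and its derivatives on $B$; these together control both the higher-order terms in the time-shift Taylor remainder and the cubic terms arising from the second-variation expansion of $V$. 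Crucially, the time-interval length $\tau''(s) - \tau'(s)$ is a positive constant independent of $s$, so Remark \ref{PDS} and Lemma \ref{Min} apply uniformly in $n$ with that constant as $\delta$. Since at most $\lfloor S/\varepsilon^{2} \rfloor$ steps occur before exiting $M_{0}$, summation yields $\delta(\varepsilon) = O(\varepsilon) \to 0$ as required.
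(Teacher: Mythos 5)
Your proposal is correct in outline but follows a genuinely different route from the paper. The paper never constructs a comparison curve for the full increment; instead it inserts the time-midpoint $(\hat\tau,\hat X)$ of $\gamma_n$, uses the triangle inequality to split $\Lambda_{s_{n+1}}^{\varepsilon}$ into two halves, and Taylor-expands the \emph{distance} $L_0$ on each half. To make that legitimate it must show that each half-pair stays uniformly away from $\mathcal{L}_0\mathrm{cut}$, which is where the compact sets $K$, $K_1^{\varepsilon}$, Proposition \ref{extension} and Proposition \ref{B-Analogy} enter, and where the threshold $\varepsilon_0(M_0)$ comes from. You instead never differentiate $L_0$ at all: you exhibit an admissible competitor $c^{\varepsilon}$, use $\Lambda_{s_{n+1}}^{\varepsilon}\le\mathcal{L}_0(c^{\varepsilon})$ and $\Lambda_{s_n}^{\varepsilon}=\mathcal{L}_0(\gamma_n)$, and expand the \emph{functional} $\mathcal{L}_0$ via Propositions \ref{1stVar} and \ref{2ndVar}. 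This buys a real simplification — the entire cut-locus machinery of section \ref{sec:L0cut} becomes unnecessary for this proposition, and $\varepsilon_0$ is only needed to keep the perturbed curves in a fixed compact enlargement of $B$ — at the price of having to control the third-order Taylor remainder of $\mathcal{L}_0$ along the variation rather than derivatives of $L_0$ on a compact set off the cut locus; your a priori bounds (Lemmas \ref{Min} and \ref{Speed}, Remark \ref{PDS}, and the constancy of $\TT(s)-\T(s)=\TT_1-\T_1>0$) suffice for that. Two points you leave implicit and should make explicit in a full write-up: the variational field must be taken to be the space-time parallel transport of $v_n$ along all of $\gamma_n$ (not merely prescribed at the endpoints), since Lemma \ref{UnifExp}(iii) and the computation in Proposition \ref{NonPos} are applied to exactly that field; and the variation must be chosen (e.g.\ via $f(\epsilon,t)=\exp^{g(t)}_{\gamma_n(t)}(\epsilon V(t))$) so that the transversal curves at the two endpoints are geodesics, which kills the boundary term $\langle\nabla_{V}V,\dot\gamma\rangle\big|_{t'}^{t''}$ in \eqref{2ndVarform}; without that choice $\Sigma_{n+1}^{\varepsilon}$ acquires an extra term linear in second derivatives of the variation whose conditional mean need not vanish. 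Finally, your identification of the time-shift contribution with $\partial_{t'}L_0+\partial_{t''}L_0$ should be read as the explicit integral $\tfrac12\int\{\Delta_{g(t)}R_{g(t)}+2|\Ric_{g(t)}|^2-2\Ric_{g(t)}(\dot\gamma,\dot\gamma)\}\,\D t$, which is well defined even where $L_0$ is not differentiable; with that reading the inequality $\overline{\Sigma}_{n+1}^{\varepsilon}\le 0$ is exactly the displayed estimate in the proof of Proposition \ref{NonPos}, which, as you correctly note, uses only that $\gamma_n$ is an $\mathcal{L}_0$-geodesic.
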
 %%%%%%%%%%%%%%%%%%%%%%%%%%%%%%%%%%%%%%%

\begin{Rm} %%%%%%%%%%%%%%%%%%%%%%%%%%%%%%%%%%%%%%%
Intuitively, it is clear that the difference inequality (\ref{VI})
comes from the Taylor expansion with respect to $\varepsilon$.
Therefore, when 
$
(
	\tau^{\prime}(s_{n}), X_{s_{n}}^{\varepsilon} ;
	\tau^{\prime\prime}(s_{n}), Y_{s_{n}}^{\varepsilon}
)
\notin
\mathcal{L}_{0}\mathrm{cut}
$, 
\begin{align*}
\zeta_{n+1}^\varepsilon 
& = 
\big[ 
	\varepsilon \dot{X}_{s_{n}+}^{\varepsilon} 
	\oplus 
	\varepsilon \dot{Y}_{s_{n}+}^{\varepsilon} 
\big]
L_{0}^{ \B(s_{n}), \BB(s_{n}) }
\\
\Sigma_{n+1}^\varepsilon 
& = 
-
\Big\{
	\frac{ \partial L_{0}^{t' , t''} }{ \partial t'}
	+
	\frac{ \partial L_{0}^{t' ,t''} }{ \partial t'' }
\Big\}
\Big\vert_{(t' , t'') = ( \B(s_n) , \BB(s_n) )}
( X_{s_{n}}^{\varepsilon}, Y_{s_{n}}^{\varepsilon} ) \\
&\hspace{5mm}
+ \frac{1}{2}
\big[
	\Hess_{ g(\B(s_{n})) \oplus g(\BB(s_{n})) }
	L_{0}^{ \B(s_{n}), \BB(s_{n}) }
\big]
\big(
	\varepsilon \dot{X}_{s_{n}+}^{\varepsilon} 
        \oplus 
        \varepsilon \dot{Y}_{s_{n}+}^{\varepsilon},
	\varepsilon \dot{X}_{s_{n}+}^{\varepsilon} 
        \oplus 
        \varepsilon \dot{Y}_{s_{n}+}^{\varepsilon}
\big) . 
\end{align*}
However, we must avoid using this expression
since $\mathcal{L}_0\mathrm{cut} \neq \emptyset$. 
\end{Rm} %%%%%%%%%%%%%%%%%%%%%%%%%%%%%%%%%%%%%%%%%

\begin{proof} %%%%%%%%%%%%%%%%%%%%%%%%%%%%%%%%%%%%
If $\mathcal{L}_{0}\mathrm{cut} = \emptyset$, 
the Taylor expansion with respect to the parameter $\varepsilon$ 
easily yields the desired estimate. 
Thus we will modify the argument by 
taking the presence of $\mathcal{L}_{0}\mathrm{cut}$ 
into account. 
We begin with determining $Q_{n+1}^{\varepsilon}$ 
which enjoys \eqref{VI} 
and we verify \eqref{RT} after that. 
Let us define $K \subset ([ 0 , S ] \times M )^3$ by 
\[
K : = 
\left\{ 
    ( u_i , m_i )_{i=1}^3 
    \; \left| \; 
    \begin{array}{l}
        ( u_1 , m_1 ), ( u_3 , m_3 ) \in [ 0 , S ] \times \overline{M_{0}}, 
        \\
        u_3 - u_1 \ge \TT_1 - \T_1 , 
        \\
        u_2 = (u_1 + u_3 )/2,
        \\
        L_0^{u_1, u_3} ( m_1 , m_3 )  
        \\ \quad 
        = L_{0}^{u_1 , u_2} ( m_1 , m_2 ) 
        + L_{0}^{u_2 , u_3} ( m_2 , m_3 ) 
    \end{array}
    \right.
\right\}.
\]
By Proposition \ref{Est0} (iii), there are constants $c_1 , c_2 > 0$ 
such that, for each $(u_{i}, m_{i})_{i=1}^{3} \in K$, 
\begin{align*}
&
\rho_{g(\T_{1})} ( m_{1}, m_{2} ) + \rho_{g(\TT_{1})} ( m_{2}, m_{3} ) \\
&\leq
c_1 ( L_{0}^{u_{1},u_{2}} ( m_{1}, m_{2} ) + L_{0}^{u_{2},u_{3}} ( m_{2}, m_{3} ) )
+ c_2 \\
& =
c_1 L_{0}^{u_{1},u_{3}} ( m_{1}, m_{3} ) + c_2
\end{align*}
Since the last quantity does not depend on $m_{2}$ and 
continuously depends on $(m_1 , t_1 ; m_3 , t_3)$ which moves on a compact set, 
there is a bounded set $D$, which is possibly larger than $M_{0}$, 
such that $( u_2 , m_2 ) \in D$ holds for any $( u_i , m_i )_{i=1}^3 \in K$.
Therefore $K$ is compact.
Take $( u_i , m_i )_{i=1}^3 \in K$. 
Then $( u_2 , m_2 )$ must be on a minimal $\mathcal{L}_{0}$-geodesic 
joining $( u_1 , m_1 )$ and $( u_3 , m_3 )$. 
We denote it by $\gamma$. 
Since $u_1 \neq u_3$ by the definition of $K$, 
both $\gamma |_{[u_1, u_{2}]}$ and $\gamma |_{[u_2 , u_3]}$ are 
not constant as a space-time curve 
(It means that $( t , \gamma (t) )$ is 
not constant both on $[ u_1 , u_2 ]$ and on $[ u_2 , u_3 ]$). 
Hence we can extend $\gamma |_{[u_1, u_{2}]}$ and $\gamma |_{[u_2 , u_3]}$ 
with keeping their minimalities. 
It implies
$( u_i , m_i ; u_{i+1} , m_{i+1} ) \notin \mathcal{L}_0\mathrm{cut}$
for $i = 1 , 2$
by Proposition \ref{extension} 
and Proposition \ref{B-Analogy}(iii)(iv).
For $j=1,2$, let $p_j$ be a projection from $K$ given by 
$p_j ( ( u_i , m_i )_{i=1}^3 ) = ( u_j , m_j ; u_{j+1} , m_{j+1} )$. 
Then $( p_j (K) )_{j=1,2}$ are compact and 
away from $\mathcal{L}_0 \mathrm{cut}$. 
Let us define a compact set 
$K_1^{\varepsilon} \subset ( [ 0, S ] \times M )^2$ 
by 
\begin{align*}
A 
: & = 
\left\{
( u' , m' , v' ; u'' , m'' , v'' ) 
\; \left| \;
\begin{array}{l}
(u' , m' ; u'', m'' ) \in p_1 (K) \cup p_2 (K), 
\\
v' \in T_{m'} M, v'' \in T_{m''} M,  
\\
\| v' \|_{g(u')} = \| v'' \|_{g(u'')} 
\\ \hspace{8em}
= \sqrt{2(d+2)} 
\end{array}
\right. 
\right\}, 
\\
K_1^{\varepsilon} 
: & = 
\Big\{ 
\left( 
    u' + a , 
    \exp_{m'}^{g (\B(u'))} ( \varepsilon a v' ) ;
    u'' + a , 
    \exp_{m''}^{g (\BB(u''))} ( \varepsilon a v'' ) 
\right) 
\\
& \hspace{10em}
\; \Big| \; 
( u' , m' , v' ; u'' , m'' , v'' ) 
\in A, a \in [ 0 , 1 ]
\Big\}. 
\end{align*}
Since $\mathcal{L}_0\mathrm{cut}$ is closed
by Proposition \ref{CutLoc}(iii), 
there is $\varepsilon_0 > 0$ such that 
$K_1^{\varepsilon} \cap \mathcal{L}_0\mathrm{cut} = \emptyset$ 
when $\varepsilon \le \varepsilon_0$. 
Note that the map 
\begin{equation*}
  ( \T , \M ; \TT , \MM ) 
  \mapsto 
  \left( 
    \frac{ \T + \TT }{2} , 
    \gamma_{\M,\MM}^{\T,\TT} 
    \left( \frac{\T + \TT}{2} \right)
  \right)
\end{equation*}
is measurable. 

Let $\varepsilon < \varepsilon_0$. 
For simplicity of notations, 
we denote the ``midpoint'' of 
$( \B(s_n) , X_{s_n}^{\varepsilon} )$ and 
$( \BB(s_n), Y_{s_n}^{\varepsilon} )$, 
and the associated variational vector 
by the following: 
\begin{align*}
\hat{\tau} 
: & = 
\frac{\B(s_n) + \BB(s_n)}{2},  
\\
\hat{X}
: & = 
\gamma_{X_{s_n}^{\varepsilon} , Y_{s_n}^{\varepsilon} }^{\B(s_n), \BB(s_n)} 
( \hat{\tau} ), 
\\
\hat{V} 
: & = 
\para_{X_{s_n}^{\varepsilon} , \hat{X}}^{\B(s_n) ,\hat{\tau}}
\dot{X}_{s_n +}^{\varepsilon}. 
\end{align*}
Then we can easily verify the following:  
\begin{align*}
\left( 
    \B(s_n) , X_{s_n}^{\varepsilon} ; 
    \hat{\tau} , \hat{X} ; 
    \BB (s_n) , Y_{s_n}^{\varepsilon} 
\right) 
& \in K, 
\\
\left(
    \B (s_n) , X_{s_n}^{\varepsilon} , \varepsilon \dot{X}_{s_n +}^{\varepsilon}; 
    \hat{\tau} , \hat{X} , \varepsilon \hat{V}
\right)
& \in K_1^{\varepsilon} , 
\\
\left(
    \hat{\tau} , \hat{X} , \varepsilon \hat{V}; 
    \BB (s_n) , Y_{s_n}^{\varepsilon} , \varepsilon \dot{Y}_{s_n +}^{\varepsilon} 
\right)
& \in K_1^{\varepsilon} . 
\end{align*}
Since $K_1^{\varepsilon} \cap \mathcal{L}_0\mathrm{cut} = \emptyset$, 
the Taylor expansion yields 
\begin{align} 
& \label{eq:V1} 
L_0^{\B(s_{n+1}), \hat{\tau} + \varepsilon^2} 
( X_{s_{n+1}}^{\varepsilon} , \exp_{\hat{X}}^{g(\hat{\tau})} ( \varepsilon \hat{V}) ) 
\\ \nonumber
& \hspace{1em} 
\le 
L_0^{\B (s_n) , \hat{\tau}} ( X_{s_{n}}^{\varepsilon} , \hat{X} )
+ 
\varepsilon 
\big[ 
  \varepsilon \dot{X}_{s_{n}+}^{\varepsilon} 
  \oplus 
  \varepsilon \hat{V}
\big]
L_{0}^{ \B(s_{n}), \hat{\tau} } 
\\ \nonumber
& \hspace{2em}
- \varepsilon^2 
\big[
        \frac{ \partial L_{0}^{t' , t''} }{ \partial t' } 
        +
            \frac{ \partial L_{0}^{t' , t'' } }{ \partial t'' } 
\big] 
\big|_{(t' , t'') = ( \tau' (s_n) , \hat{\tau} )}
( X_{s_{n}}^{\varepsilon}, \hat{X} )
\\ \nonumber 
& \hspace{2em} + 
\frac{\varepsilon^2}{2}
\Big[
  \Hess_{ g(\B(s_{n})) \oplus g(\hat{\tau}) } L_{0}^{ \B(s_{n}), \hat{\tau} }
\Big]
\big(
	\varepsilon \dot{X}_{s_{n}+}^{\varepsilon} 
        \oplus 
        \varepsilon \hat{V}, 
	\varepsilon \dot{X}_{s_{n}+}^{\varepsilon} 
        \oplus 
        \varepsilon \hat{V}
\big)
\\ \nonumber
& \hspace{2em}
+ o (\varepsilon^2)
\intertext{and}
\label{eq:V2}
& L_0^{\hat{\tau} + \varepsilon^2 , \BB(s_{n+1})} 
( \exp_{\hat{X}}^{g(\hat{\tau})} ( \varepsilon \hat{V}), Y_{s_{n+1}}^{\varepsilon} )
\\ \nonumber
& \hspace{1em} 
\le 
L_0^{\hat{\tau}, \BB (s_n)} ( \hat{X}, Y_{s_{n}}^{\varepsilon} )
+ 
\varepsilon
\big[ 
  \varepsilon \hat{V}
  \oplus 
  \varepsilon \dot{Y}_{s_{n}+}^{\varepsilon} 
\big]
L_{0}^{ \hat{\tau}, \BB(s_{n}) } 
\\ \nonumber
& \hspace{2em}
- \varepsilon^2 
\big[
        \frac{ \partial L_{0}^{t' , t''} }{ \partial t' } 
        +
            \frac{ \partial L_{0}^{t', t''} }{ \partial t'' } 
\big] 
\big|_{( t' , t'' ) = ( \hat{\tau}, \BB (s_n) )}
( \hat{X} , Y_{s_{n}}^{\varepsilon} )
\\ \nonumber
& \hspace{2em} + 
\frac{\varepsilon^2}{2}
\Big[
  \Hess_{ g(\hat{\tau}) \oplus g(\BB(s_{n})) } L_{0}^{  \hat{\tau}, \BB(s_{n}) }
\Big]
\big(
        \varepsilon \hat{V} 
        \oplus 
	\varepsilon \dot{Y}_{s_{n}+}^{\varepsilon} ,
        \varepsilon \hat{V}
        \oplus 
	\varepsilon \dot{Y}_{s_{n}+}^{\varepsilon} 
\big)
\\ \nonumber
& \hspace{2em} 
+ o (\varepsilon^2). 
\end{align}
Note that the two remainder terms $o ( \varepsilon^2 )$ 
appeared in the last equalities consist of 
higher derivatives of $L_0$ on $K_1^{\varepsilon_0}$. 
We denote the sum of these two remainder terms by $Q_{n+1}^{\varepsilon}$. 
Since $K_1^{\varepsilon_0}$ is compact, $Q_{n+1}^{\varepsilon}$ is controlled 
uniformly in the position of $( X_{s_n}^\varepsilon , Y_{s_n}^{\varepsilon} )$ 
and $n$ as long as $s_n < \sigma_M ( X^\varepsilon , Y^\varepsilon )$. 
It means that there is a constant $\tilde{\delta} ( \varepsilon )$ 
being independent of $( X_{s_n}^\varepsilon , Y_{s_n}^{\varepsilon} )$ and $n$ 
such that $Q_{n+1}^{\varepsilon} \le \tilde{\delta} (\varepsilon)$ and 
$\tilde{\delta} ( \varepsilon ) / \varepsilon^2 \to 0$. 
The triangular inequality 
for $L_0$ and the definition of $( \hat{\tau}, \hat{X} )$ 
together with \eqref{eq:V1} and \eqref{eq:V2} yield 
\begin{align*}
&
L_0^{\B (s_{n+1}) , \BB ( s_{n+1} )} ( X_{s_{n+1}}^{\varepsilon} , Y_{s_{n+1}}^{\varepsilon} ) 
-
L_0^{\B (s_{n}) , \BB ( s_{n} )} ( X_{s_{n}}^{\varepsilon} , Y_{s_{n}}^{\varepsilon} ) 
\\
& \hspace{1em} 
\le 
\left( 
    L_0^{ \B (s_{n+1}) , \hat{\tau} + \varepsilon^2 } 
    ( 
        X_{s_{n+1}}^{\varepsilon} , 
        \exp_{\hat{X}}^{g(\hat{\tau})} ( \varepsilon \hat{V})
    ) 
    - 
    L_0^{ \B (s_{n}) , \hat{\tau} } 
    ( 
        X_{s_{n}}^{\varepsilon} , 
        \hat{X}
    ) 
\right)
\\
& \hspace{2em} + 
\left( 
    L_0^{\hat{\tau} + \varepsilon^2  , \BB ( s_{n+1} )} 
    ( 
        \exp_{\hat{X}}^{g(\hat{\tau})} ( \varepsilon \hat{V}) , 
        Y_{s_{n+1}}^{\varepsilon} 
    ) 
    - 
    L_0^{\hat{\tau} , \BB ( s_{n} )} 
    ( 
        \hat{X}, 
        Y_{s_{n}}^{\varepsilon}  
    ) 
\right)
\\
& \hspace{1em} = 
\varepsilon \zeta_{n+1}^{\varepsilon}
+ \varepsilon^2 \Sigma_{n+1}^{\varepsilon}
+ Q_{n+1}^{\varepsilon}, 
\end{align*}
where random variables 
$\zeta_{n+1}^{\varepsilon}$
and
$\Sigma_{n+1}^{\varepsilon}$
are defined by
\begin{equation*}
\begin{split}
\zeta_{n+1}^{\varepsilon}
:=
[ \varepsilon \dot{X}_{s_{n}+}^{\varepsilon} \oplus \varepsilon \hat{V} ]
L_{0}^{ \tau^{\prime} (s_{n}) , \hat{\tau} }
+
[ \varepsilon \hat{V} \oplus \varepsilon \dot{Y}_{s_{n}+}^{\varepsilon} ]
L_{0}^{ \hat{\tau} , \tau^{\prime\prime} (s_{n}) }
\end{split}
\end{equation*}
and
\begin{equation*}
\begin{split}
\Sigma_{n+1}^{\varepsilon}
&:=
-
\Big\{
	\frac{
		\partial L_{0}^{ t^{\prime} , t^{\prime\prime} }
	}{
		\partial t^{\prime}
	}
	+
	\frac{
		\partial L_{0}^{ t^{\prime} , t^{\prime\prime} }
	}{
		\partial t^{\prime\prime}
	}
\Big\}
\Big\vert_{
	( t^{\prime} , t^{\prime\prime} )
	=
	( \tau^{\prime} (s_{n}) , \hat{\tau} )
}
(
	X_{s_{n}}^{\varepsilon} , \hat{X}
) \\
&\hspace{10mm} +
\frac{1}{2}
\big[
	\Hess_{ g( \tau^{\prime} (s_{n}) ) \oplus g( \hat{\tau} ) }
	L_{0}^{ \tau^{\prime} (s_{n}), \hat{\tau} }
\big]
\big(
	\varepsilon \dot{X}_{s_{n}+}^{\varepsilon}
	\oplus
	\varepsilon \hat{V} ,
	\varepsilon \dot{X}_{s_{n}+}^{\varepsilon}
	\oplus
	\varepsilon \hat{V}
\big) \\
&\hspace{3mm}
-
\Big\{
	\frac{
		\partial L_{0}^{ t^{\prime} , t^{\prime\prime} }
	}{
		\partial t^{\prime}
	}
	+
	\frac{
		\partial L_{0}^{ t^{\prime} , t^{\prime\prime} }
	}{
		\partial t^{\prime\prime}
	}
\Big\}
\Big\vert_{
	( t^{\prime} , t^{\prime\prime} )
	=
	( \hat{\tau} , \tau^{\prime\prime} (s_{n}) )
}
(
	\hat{X} , Y_{s_{n}}^{\varepsilon}
) \\
&\hspace{13mm}+
\frac{1}{2}
\big[
	\Hess_{ g( \hat{\tau} ) \oplus g( \tau^{\prime\prime} (s_{n}) ) }
	L_{0}^{ \hat{\tau} , \tau^{\prime} (s_{n}) }
\big]
\big(
	\varepsilon \hat{V}
	\oplus
	\varepsilon \dot{Y}_{s_{n}+}^{\varepsilon} ,
	\varepsilon \hat{V}
	\oplus
	\varepsilon \dot{Y}_{s_{n}+}^{\varepsilon}
\big) . 
\end{split}
\end{equation*}
Thus \eqref{VI} holds. Since  
\[
\# \{ k \in \Natural : s_{k} < \sigma _{M_{0}} ( X^{\varepsilon}, Y^{\varepsilon} ) \wedge S \} 
\le S \varepsilon^{-2} , 
\]
we have 
\[
\sum_{n:\ s_{n} < \sigma _{M_{0}} ( X^{\varepsilon}, Y^{\varepsilon} ) \wedge S}
Q_{n+1}^{\varepsilon} \le S \varepsilon^{-2} \tilde{\delta} ( \varepsilon ) \to 0 
\]
as $\varepsilon \to 0$. It asserts \eqref{RT}.

Finally, we prove the required properties for
$\zeta_{n}^{\varepsilon}$ and $\Sigma_{n}^{\varepsilon}$.
The measurability are obvious. The integrabilities hold  
because $X_{s_{n}}^{\varepsilon}$, $\hat{X}$ and $Y_{s_{n}}^{\varepsilon}$
lie on a bounded domain in $M$.
Finally \eqref{eq:E_var} follows from 
Lemma \ref{UnifExp} and Proposition \ref{NonPos} 
since $\lambda_{n}$ is independent of $\mathcal{G}_{n-1}$. 
\end{proof}%%%%%%%%%%%%%%%%%%%%%%%%%%%%%%%%%%%%%%%

Let $(\mathcal{F}_{s})_{0\leq s \leq S}$ be the filtration defined by
$$
\mathcal{F}_{s} := \sigma \big( (X_{u},Y_{u}) : 0\leq u \leq s \big) ,
\quad
0\leq s \leq S
$$
and set $( \mathcal{F}_{s_{n}}^{\varepsilon} )_{n=0}^{\infty}$ by
$
\mathcal{F}_{0}^{\varepsilon}
:= \text{the trivial $\sigma$-field,}
$
and
\begin{align*}
\mathcal{F}_{s_{n}}^{\varepsilon}
&:= \sigma \big( (X_{s_{k}}^{\varepsilon},Y_{s_{k}}^{\varepsilon}) : k=1,2,\ldots ,n \big)
\end{align*}
for each $n=1,2,\ldots$.

\begin{Thm} \label{thm:supermart} %%%%%%%%%%%%%%%%
Set $\Lambda_{s}:=L_{0}^{ \B(s) , \BB(s) } ( X_{s},Y_{s} )$.
Then $\Lambda_s$ is integrable for each $s \in [ 0 , S ]$ and 
for each $u \le s$ we have
\[
\mathbf{E} [ \Lambda_{s} \vert \mathcal{F}_{u} ]
\leq
\Lambda_{u}, 
\]
that is, 
$\Lambda =(\Lambda_{s})_{0\leq s\leq S}$ is an
$(\mathcal{F}_{s})_{0\leq s\leq S}$-supermartingale.
\end{Thm}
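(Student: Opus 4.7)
My plan is to leverage the discrete difference inequality of Proposition \ref{DiffIneq} in combination with the truncation-by-concave-functions trick previewed at the end of Section \ref{AbsL0Cut}. Specifically, I first show that $\varphi(\Lambda_s)$ is an $(\mathcal{F}_s)$-supermartingale for every nondecreasing, concave, $C^2$-function $\varphi$ on $\mathbb{R}$ that is bounded from above, and then I let $\varphi \uparrow \mathrm{id}$ to recover the statement for $\Lambda_s$ itself, together with its integrability.

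The first step is to translate Proposition \ref{DiffIneq} into an inequality for $\varphi(\Lambda_{s_n}^\varepsilon)$. Since $\varphi$ is nondecreasing and satisfies the concave subgradient inequality $\varphi(y) \le \varphi(x) + \varphi'(x)(y-x)$,
\begin{equation*}
\varphi(\Lambda_{s_{n+1}}^\varepsilon) - \varphi(\Lambda_{s_n}^\varepsilon)
\le
\varphi'(\Lambda_{s_n}^\varepsilon)
\bigl(
    \varepsilon \zeta_{n+1}^\varepsilon
    +
    \varepsilon^2 \Sigma_{n+1}^\varepsilon
    +
    Q_{n+1}^\varepsilon
\bigr).
\end{equation*}
Because $\varphi'(\Lambda_{s_n}^\varepsilon) \ge 0$ is $\mathcal{G}_n$-measurable, conditioning on $\mathcal{G}_n$ and invoking \eqref{eq:E_var} kills the $\zeta$-term and produces a nonpositive contribution from $\Sigma$. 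Fixing a relatively compact open $M_0$ containing $m'$ and $m''$, stopping the recursion at $\sigma_{M_0}(X^\varepsilon,Y^\varepsilon) \wedge S$ and iterating, \eqref{RT} then yields, for every $0 \le m \le n$,
\begin{equation*}
\mathbf{E}\bigl[\varphi(\Lambda_{s_n \wedge \sigma_{M_0}(X^\varepsilon, Y^\varepsilon)}^\varepsilon)
\,\big|\,
\mathcal{F}_{s_m}^\varepsilon\bigr]
\le
\varphi(\Lambda_{s_m \wedge \sigma_{M_0}(X^\varepsilon, Y^\varepsilon)}^\varepsilon)
+
\|\varphi'\|_\infty \delta(\varepsilon),
\end{equation*}
an approximate supermartingale property at the level of the $\varepsilon$-process.

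Next I pass to the limit $\varepsilon \to 0$ along the chosen subsequence. By the Skorokhod representation we may assume uniform a.s.~convergence $(X^\varepsilon, Y^\varepsilon) \to (X,Y)$, and then the continuity of $L_0$ (Lemma \ref{Continuity}) gives $\Lambda_s^\varepsilon \to \Lambda_s$ uniformly in $s \in [0,S]$. Since $\varphi$ and $\varphi'$ are bounded, a bounded-convergence argument combined with a sandwich between $M_0$ and a slightly shrunk open subset (needed because the first exit from an open set is only lower semicontinuous in the path) gives, for any $0 \le u \le v \le S$ and any bounded continuous $\mathcal{F}_u$-measurable $F$,
\begin{equation*}
\mathbf{E}\bigl[\varphi(\Lambda_{v \wedge \sigma_{M_0}(X,Y)}) F\bigr]
\le
\mathbf{E}\bigl[\varphi(\Lambda_{u \wedge \sigma_{M_0}(X,Y)}) F\bigr].
\end{equation*}
Exhausting $M$ by $M_0 \uparrow M$ and using the continuity of $(X,Y)$ to obtain $\sigma_{M_0}(X,Y) \to \infty$ almost surely, bounded convergence removes the stopping and yields $\mathbf{E}[\varphi(\Lambda_v) \mid \mathcal{F}_u] \le \varphi(\Lambda_u)$.

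Finally, I take a sequence $\varphi_n \uparrow \mathrm{id}$ from the admissible class (e.g.~obtained by smoothly capping the identity at height $n$). The lower bound $\Lambda_s \ge -dK_- T/2$ from Proposition \ref{Est0}(iii) turns $\varphi_n(\Lambda_s) - \varphi_n(-dK_- T/2)$ into a nonnegative sequence increasing to $\Lambda_s + dK_- T/2$; monotone convergence in the conditional supermartingale inequality therefore delivers simultaneously the integrability $\mathbf{E}[|\Lambda_s|] < \infty$ (from $\mathbf{E}[\Lambda_s] \le \Lambda_0$) and the desired supermartingale property $\mathbf{E}[\Lambda_v \mid \mathcal{F}_u] \le \Lambda_u$. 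The main technical hurdle I anticipate is the passage $\varepsilon \to 0$ with the exit-time cutoff, since first-exit times are not continuous functionals on path space; one has to exploit the lower semicontinuity of $\sigma_{M_0}$ for open $M_0$ together with the uniformity provided by $\|\varphi'\|_\infty < \infty$ and \eqref{RT}, rather than any direct continuity statement.
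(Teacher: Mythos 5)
Your proposal is correct and follows essentially the same route as the paper's proof: a concave, bounded-above truncation of $L_0$ (the paper uses $\beta \wedge x$ together with the conditional Jensen inequality where you use a smooth $\varphi$ with the subgradient inequality), localization by exit times from relatively compact sets, passage to the limit $\varepsilon \to 0$ via the weak convergence of the coupled geodesic random walks, and removal of the localization and truncation by dominated and monotone convergence using the uniform lower bound on $L_0$ from Proposition \ref{Est0}(iii). The only implementation differences are that the paper works directly with weak convergence and portmanteau-type estimates for the open and closed events $\{ \sigma_{M_0} > s \}$ and $\{ \sigma_{M_0} \le s \}$ (accepting an error $C \eta$ that is sent to zero afterwards) rather than invoking a Skorokhod representation and the a.s.\ continuity of the exit-time functional, and it applies the optional sampling theorem at the discrete stopping time $[ \sigma_{M_0} ( X^{\varepsilon} , Y^{\varepsilon} ) ]_{\varepsilon}$ where you iterate the one-step conditional estimate.
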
%%%%%%%%%%%%%%%%%%%%%%%%%%%%%%%%%%%%%%%%%
\begin{proof}%%%%%%%%%%%%%%%%%%%%%%%%%%%%%%%%%%%%%
Take $\eta > 0$ arbitrarily and 
choose a bounded open $M_{0} \subset M$ sufficiently large so that 
$
\mathbf{P} [ \sigma_{M_{0}} ( X,Y ) \le s ] \le \eta
$.
We first claim that 
for each $\beta > 0$ there exists $C = C ( \beta ) > 0$ 
being independent of $M_{0}$ and $\eta$ such that 
\begin{equation} \label{eq:supermart0}
\mathbf{E} \big[ 
( \beta \wedge \Lambda_{s \wedge \sigma_{M_{0}}(X,Y) } 
- 
\beta \wedge \Lambda_{u \wedge \sigma_{M_{0}}(X,Y) } ) 
F \big] 
\le C \eta 
\end{equation}
for each nonnegative bounded $\mathcal{F}_{u}$-measurable random variable $F$.
For proving \eqref{eq:supermart0}, we may assume that $F$ is of the form 
$
F=f( Z_{u_{1}}, \ldots , Z_{u_{n}} )
$
where $f$ is a nonnegative and bounded continuous function on $( M^{2} )^{n}$,
$Z_{u}=(X_{u},Y_{u})$ and $0 \leq u_{1} < \cdots < u_{n} \leq u$.

By Proposition \ref{Est0} (iii), we have 
\begin{align*}
\mathbf{E} & \big[ 
(
	\beta \wedge \Lambda_{s \wedge \sigma_{M_{0}}(X,Y) } 
	-
	\beta \wedge \Lambda_{u \wedge \sigma_{M_{0}}(X,Y) }
)
F
\big] 
\\
& \le 
\mathbf{E}
\big[ 
	( \beta \wedge \Lambda_{s} - \beta \wedge \Lambda_{u} )
	F :
	\sigma_{M_{0}} (X,Y) > s
\big] 
+ C_1 \eta 
\end{align*}
for some constant $C_1 > 0$ which is independent of $M_{0}$
but may depend on $\beta$
(Indeed, one can take $C_{1} = \beta + ( dK_{-}T ) / 2$). 
Let
$
[u]_{\varepsilon}
:=
\sup
\big\{
	\varepsilon^{2} n :
	n \in \Natural,\, \varepsilon^2 n \le u
\big\}
$ 
and $Z^\varepsilon := ( X^\varepsilon , Y^\varepsilon )$. 
Since
$
\{
	\mathbf{w}
	\; | \;
	\sigma_{M_{0}} ( \mathbf{w} ) > s
\}
$
is open, 
\begin{align*}
\mathbf{E} & \big[ 
( \beta \wedge \Lambda_{s}
- 
\beta \wedge \Lambda_{u} )
F : \sigma_{M_{0}} (X,Y) > s \big] 
\\
& \le 
\liminf_{\varepsilon \to 0} 
\mathbf{E} \big[ 
( \beta \wedge \Lambda_{s}^\varepsilon 
- 
\beta \wedge \Lambda_{u}^\varepsilon )
f ( Z^\varepsilon_{u_1} , \ldots , Z^\varepsilon_{u_n} ) 
: \sigma_{M_{0}} (X^\varepsilon,Y^\varepsilon) > s \big] 
\\
& = 
\liminf_{\varepsilon \to 0} 
\mathbf{E} \big[ 
( \beta \wedge \Lambda_{[s]_{\varepsilon}}^\varepsilon - \beta \wedge \Lambda_{[u]_\varepsilon}^\varepsilon )
f ( Z^\varepsilon_{[u_1]_\varepsilon} , \ldots , Z^\varepsilon_{[u_n]_\varepsilon} ) 
: \sigma_{M_{0}} (X^\varepsilon,Y^\varepsilon) > s \big], 
\end{align*}
where the last equality follows from the continuity of $L_0^{\B,\BB}$ and $f$. 
Then 
\begin{align*}
\mathbf{E} &
\big[ 
(
	\beta \wedge \Lambda_{[s]_{\varepsilon}}^\varepsilon
	-
	\beta \wedge \Lambda_{[u]_\varepsilon}^\varepsilon
)
f
(
	Z^\varepsilon_{[u_1]_\varepsilon} ,
	\ldots ,
	Z^\varepsilon_{[u_n]_\varepsilon}
) 
: \sigma_{M_{0}} ( X^\varepsilon , Y^\varepsilon ) > s
\big]
\\
& \le 
\mathbf{E} \big[ 
( 
	\beta
	\wedge
	\Lambda_{
		[s]_{\varepsilon}
		\wedge
		[\sigma_{M_{0}} ( X^\varepsilon , Y^\varepsilon )]_\varepsilon
	}^\varepsilon 
	- 
	\beta
	\wedge
	\Lambda_{
		[u]_{\varepsilon}
		\wedge
		[\sigma_{M_{0}} ( X^\varepsilon , Y^\varepsilon )]_\varepsilon
	}^\varepsilon 
)
f ( Z^\varepsilon_{[u_1]_\varepsilon} , \ldots , Z^\varepsilon_{[u_n]_\varepsilon} ) 
\big]
\\
& \hspace{1em}
+ C_1 \mathbf{P} [ \sigma_{M_{0}} (X^\varepsilon,Y^\varepsilon) \le s ]. 
\end{align*}
For the second term of the right hand side, 
\[
\limsup_{\varepsilon \to 0}
\mathbf{P} [ \sigma_{M_{0}} (X^\varepsilon,Y^\varepsilon) \le s ] 
\le \eta 
\]
holds since $\{ \mathbf{w} \; | \; \sigma_{M_{0}} ( \mathbf{w} ) \le s \}$ is closed.
Let us estimate the first term. 
Since Proposition \ref{DiffIneq} ensures
$\overline{\Sigma}_{i}^{\varepsilon} \leq 0$ for each $i=1,2,\ldots$, 
Proposition \ref{DiffIneq} together with the conditional Jensen inequality
yields 
\begin{align*}
&
\mathbf{E}
\big[
( 
        \beta
	\wedge
	\Lambda_{
		[s]_{\varepsilon}
		\wedge
		[\sigma_{M_{0}} ( X^{\varepsilon}, Y^{\varepsilon} )]_{\varepsilon} 
	}^\varepsilon
)
f
(
	Z_{ [u_{1}]_{\varepsilon} }^\varepsilon ,
	\ldots ,
	Z_{ [u_{n}]_{\varepsilon} }^\varepsilon
)
\big] \\
&\leq
\mathbf{E}
\Bigg[
\beta \wedge
\Bigg\{
\mathbf{E}
\Big[
	\sum_{
		i =
		[u]_{\varepsilon}
		\wedge
		[\sigma_{M_{0}} ( X^{\varepsilon}, Y^{\varepsilon} )]_{\varepsilon} + 1
	}^{
		[s]_{\varepsilon}
		\wedge
		[\sigma_{M_{0}} ( X^{\varepsilon}, Y^{\varepsilon} )]_{\varepsilon}
	}
	\hspace{-9mm} 
	\varepsilon \zeta_{i}^{\varepsilon} 
	 +
	\varepsilon^2 ( \Sigma_{i}^{\varepsilon} - \overline{\Sigma}_{i}^{\varepsilon} )
	 + 
	Q_i^\varepsilon 
	\hspace{1mm}
	\big\vert
	\mathcal{F}_{ [u]_{\varepsilon} }^{\varepsilon}
\Big]
\\
& \hspace{10mm}+
\Lambda_{
	[u]_{\varepsilon}
	\wedge
	[\sigma_{M_{0}} ( X^{\varepsilon}, Y^{\varepsilon} )]_{\varepsilon}
}^\varepsilon
\Bigg\}
f( Z_{ [u_{1}]_{\varepsilon} }^\varepsilon , \ldots , Z_{ [u_{n}]_{\varepsilon} }^\varepsilon )
\Bigg] .
\end{align*}
Since $[\sigma_{M_{0}} ( X^{\varepsilon}, Y^{\varepsilon} )]_{\varepsilon}$
is an $( \mathcal{F}_{s_{n}}^{\varepsilon} )_{n}$-stopping time,
we can apply the optional sampling theorem to conclude that
the terms involving 
$\zeta_i^\varepsilon$ and 
$\Sigma_i^\varepsilon - \bar{\Sigma}_i^\varepsilon$ vanish. 
Again by Proposition \ref{DiffIneq}, we have
\begin{align*}
\mathbf{E}
\Big[
	\sum_{
		i =
		[u]_{\varepsilon}
		\wedge
		[\sigma_{M_{0}} ( X^{\varepsilon}, Y^{\varepsilon} )]_{\varepsilon}
		+ 1
	}^{
		[s]_{\varepsilon}
		\wedge
		[\sigma_{M_{0}} ( X^{\varepsilon}, Y^{\varepsilon} )]_{\varepsilon}
	}
	\hspace{-9mm}
	Q_{i}^{\varepsilon}
	\hspace{3mm}
	\big\vert
	\mathcal{F}_{ [u]_{\varepsilon} }^{\varepsilon}
\Big]
\leq
\delta (\varepsilon) \to 0
\quad \text{as $\varepsilon \downarrow 0$.}
\end{align*}
Therefore, by combining all these estimates, 
we obtain the claim \eqref{eq:supermart0} with $C = 2 C_1$.

In \eqref{eq:supermart0}, 
letting $M_{0} \uparrow M$ 
with the dominated convergence theorem 
and 
letting $\eta \downarrow 0$ yield 
\[
\mathbf{E} \big[ 
( \beta \wedge \Lambda_{s} )
F \big] 
\le 
\mathbf{E} \big[ 
( \beta \wedge \Lambda_{u} ) 
F \big] 
\]
Since $\Lambda_s$ is bounded from below by Proposition \ref{Est0} (iii), 
the monotone convergence theorem yields the conclusion
by $\beta \uparrow \infty$. 
Indeed, we obtain the integrability of $\Lambda_s$ 
by applying this argument with $u=0$. 
\end{proof}%%%%%%%%%%%%%%%%%%%%%%%%%%%%%%%%%%%%%%%

Now we are in turn to complete the proof of Theorem \ref{Thm1}. 

%%%%%%%%%%%%%%%%%%%%%%%%%%%%%%%%%%%%%%%%%%%%%%%%%%
\begin{proof}[Proof of Theorem \ref{Thm1}]
In Theorem \ref{thm:supermart},
we have proved the existence of a coupling $(X,Y)$
of $g(\tau^{\prime}(s))$- and $g(\tau^{\prime\prime}(s))$-Brownian motions 
with deterministic initial data such that
$
s \mapsto
L_{0}^{ \tau^{\prime}(s), \tau^{\prime\prime}(s) }
( X_{s}, Y_{s} )
$
is a supermartingale.
Thus it suffices to show that 
we can choose the family of laws of couplings 
as a measurable function of initial data.

To complete it, we 
shall employ a measurable selection theorem.
Note that the space of 
all Borel probability measures on the path space 
$C( [0,S] \to M \times M)$ equipped with the weak topology
is a Polish space.

We define $\mathscr{K}$ as the set of all laws of a coupling $(X,Y)$ of
$g(\tau^{\prime}(s))$-Brownian motion $X=(X_{s})_{0 \leq s \leq S}$
and
$g(\tau^{\prime\prime}(s))$-Brownian motion $Y=(Y_{s})_{0 \leq s \leq S}$
such that
\begin{itemize}
\item[(a)] $(X_{0},Y_{0})$ is deterministic and

\vspace{2mm}
\item[(b)] For $0 \leq u \leq s$, it holds that
$L_0^{\tau^\prime (s) , \tau^{\prime\prime} (s) } ( X_s , Y_s )$ is integrable 
and 
$$
\mathbf{E}
[
L_{0}^{ \tau^{\prime} (s), \tau^{\prime\prime} (s) }
(X_{s}, Y_{s})
\vert
\mathcal{F}_u
]
\leq
L_{0}^{ \tau^{\prime} (u), \tau^{\prime\prime} (u) }
(X_{u}, Y_{u})
\quad\text{a.s.}
$$
\end{itemize}

We denote the probability or the expectation 
with respect to $Q \in \mathcal{K}$ 
by $\mathbf{P}^{Q}$ or $\mathbf{E}^{Q}$ respectively. 
For each $\M , \MM \in M$, let us define 
$\mathcal{K}_{\M , \MM} \subset \mathcal{K}$ 
by 
\[
\mathcal{K}_{\M , \MM} 
: =
  \{ 
    Q \in \mathcal{K} 
    \; | \; 
    \mathbf{P}^Q [ ( X_0 , Y_0 ) = ( \M , \MM ) ] = 1 
  \}. 
\]
By \cite[Theorem 6.9.6]{B}, the claim holds once we show 
that $\mathcal{K}_{\M, \MM}$ is compact for each $\M , \MM$. 
Since the marginal distributions of elements in $\mathcal{K}_{\M, \MM}$
is fixed, the Prokhorov theorem yields that $\mathcal{K}_{\M, \MM}$ is 
relatively compact. 
To show that $\mathcal{K}_{\M , \MM}$ is closed, 
take a sequence $Q_n \in \mathcal{K}_{\M , \MM}$ 
which converges to $Q$. 
The following argument is similar to 
the one in the proof of Theorem \ref{thm:supermart}. 
First, $\mathbf{P}^Q [ ( X_0 , Y_0 ) = ( \M , \MM ) ] = 1$ obviously holds 
and hence $Q$ verifies the condition (a). 
Second, for each  $k \in \mathbb{N}$, $f_i \in C_b ( M^2 )$ ($i=1, \ldots k$), 
$0 \le u_1 \le u_2 \le \cdots \le u_k \le s \le S$ and $R > 0$, 
\begin{multline*}
\mathbf{E}^{Q_n} 
\left[ 
    L_0^{\tau^{\prime} (s) , \tau^{\prime\prime} (s)} ( X_s , Y_s ) \wedge R 
    \; \left| \; 
    \prod_{i=1}^k f_i ( X_{u_i} , Y_{u_i} )
    \right.
\right] 
\\
\leq
\mathbf{E}^{Q_n} 
\left[ 
    L_0^{\tau^{\prime} (u_k) , \tau^{\prime\prime} (u_k) } ( X_{u_k} , Y_{u_k} ) \wedge R 
    \; \left| \; 
    \prod_{i=1}^k f_i ( X_{u_i} , Y_{u_i} )
    \right.
\right]. 
\end{multline*}
Thus, by tending $n \to \infty$ and $R \to \infty$ after it, 
we obtain 
\begin{multline*}
\mathbf{E}^{Q} 
\left[ 
    L_0^{\tau^{\prime} (s) , \tau^{\prime\prime} (s)} ( X_s , Y_s ) 
    \; \left| \; 
    \prod_{i=1}^k f_i ( X_{u_i} , Y_{u_i} )
    \right. 
\right] 
\\
\leq
\mathbf{E}^{Q} 
\left[ 
    L_0^{\tau^{\prime} (u_k) , \tau^{\prime\prime} (u_k)} ( X_{u_k} , Y_{u_k} ) 
    \; \left| \; 
    \prod_{i=1}^k f_i ( X_{u_i} , Y_{u_i} )
    \right. 
\right]. 
\end{multline*}
In particular, by applying the same argument for $k=1$, 
$f_1 \equiv 1$ and $u_1 = 0$, we obtain that 
$L_0^{\tau^{\prime} (s) , \tau^{\prime\prime} (s)} ( X_s , Y_s )$ 
is integrable with respect to $Q$. 
Thus $Q$ verifies the condition (b) and hence $Q \in \mathcal{K}_{\M, \MM}$. 
It means $\mathcal{K}_{\M , \MM}$ is closed and the proof is completed. 
\end{proof} %%%%%%%%%%%%%%%%%%%%%%%%%%%%%%%%%%%%%%

\begin{proof}%%%%%%%%%%%%%%%%%%%%%%%%%%%%%%%%%%%%%
\textit{of Corollary \ref{Cor1}.} Since 
Proposition \ref{Est0} (iii) and Lemma \ref{Continuity} ensure that
$L_0^{t_1', t_1''}$ is continuous and bounded from below, 
there is a minimizer
$\pi \in \Pi \big( c^{\prime} ( t_{1}^{\prime}), c^{\prime\prime} ( t_{1}^{\prime\prime}) \big)$
for
$$
C_{ 0, \varphi }^{ t_{1}^{\prime}, t_{1}^{\prime\prime} }
\big(
	c^{\prime} ( t_{1}^{\prime}),
	c^{\prime\prime} ( t_{1}^{\prime\prime})
\big)
=
C_{ 0, \varphi }^{ \tau^{\prime}(0), \tau^{\prime\prime}(0) }
\big(
	c^{\prime} ( \tau^{\prime}(0) ),
	c^{\prime\prime} ( \tau^{\prime\prime}(0) )
\big) .
$$
Let $\mathbf{P}_{ ( m^{\prime}, m^{\prime\prime} ) }$
be the law of the coupling $(X,Y)$ with $(X_{0},Y_{0})=( m^{\prime}, m^{\prime\prime} )$ 
obtained in Theorem~\ref{Thm1}. 
This is a probability measure on $\mathscr{W}(M) \times \mathscr{W}(M)$,
where $\mathscr{W}(M)$ is the space of continuous paths in $M$ defined on $[0,S]$.
Let $\mathbf{P}$ be the probability measure 
on $\mathscr{W}(M) \times \mathscr{W}(M)$ given by  
\[
\mathbf{P} ( \D w' , \D w'' ) 
: = 
\int_{M\times M} \hspace{-5mm} \pi ( \D m^{\prime}, \D m^{\prime\prime} )
\mathbf{P}_{ ( m^{\prime}, m^{\prime\prime} ) } ( \D w^{\prime}, \D w^{\prime\prime} ). 
\]
Note that $\mathbf{P}$ is well-defined 
by virtue of the measurability result in Theorem~\ref{Thm1}. 
Under $\mathbf{P}$, 
the canonical process $( w^{\prime}(s), w^{\prime\prime}(s) )$ is a coupling of
$g(\tau^{\prime}(s))$-Brownian motion 
and
$g(\tau^{\prime\prime}(s))$-Brownian motion 
with the initial distribution $\pi$ such that
$
L_{0}^{ \tau^{\prime}(s), \tau^{\prime\prime}(s) }
( w^{\prime}(s), w^{\prime\prime}(s) )
$
is a supermartingale. 
In particular, the law of $( w^{\prime}(s) , w^{\prime\prime}(s) )$ gives 
a coupling of
$c^{\prime}(\tau^{\prime}(s))$
and
$c^{\prime\prime}(\tau^{\prime\prime}(s))$ 
for each $s \in [ 0 , S ]$. 
Since 
$
\varphi
\big(
	L_{0}^{ \tau^{\prime}(s), \tau^{\prime\prime}(s) }
	( w^{\prime}(s), w^{\prime\prime}(s) )
\big)
$
is still a supermartingale under $\mathbf{P}$, 
we have
\begin{align*}
&
C_{ 0, \varphi }^{ \tau^{\prime}(s), \tau^{\prime\prime}(s) }
\Big(
	c^{\prime} ( \tau^{\prime}(s) ),
	c^{\prime\prime} ( \tau^{\prime\prime}(s) )
\Big) \\
&\leq
\int_{ M \times M } \hspace{-5mm}
\mathbf{E}_{ ( m^{\prime}, m^{\prime\prime} ) }
\Big[
   \varphi \left( 
	L_{0}^{ \tau^{\prime}(s), \tau^{\prime\prime}(s) }
	( w^{\prime}(s), w^{\prime\prime}(s) )
   \right)     
\Big]
\pi ( \D m^{\prime}, \D m^{\prime\prime} ) \\
&\leq
\int_{ M \times M } \hspace{-5mm}
\mathbf{E}_{ ( m^{\prime}, m^{\prime\prime} ) }
\Big[
   \varphi \left( 
	L_{0}^{ \tau^{\prime}(0), \tau^{\prime\prime}(0) }
	( w^{\prime}(0), w^{\prime\prime}(0) )
   \right)
\Big]
\pi ( \D m^{\prime}, \D m^{\prime\prime} ) \\
&=
\int_{ M \times M } \hspace{-5mm}
\varphi \left( 
    L_{0}^{ \tau^{\prime}(0), \tau^{\prime\prime}(0) }
    ( m^{\prime}, m^{\prime\prime} )
\right)
\pi ( \D m^{\prime}, \D m^{\prime\prime} ) \\
& =
C_{ 0, \varphi }^{ \tau^{\prime}(0), \tau^{\prime\prime}(0) }
\Big(
	c^{\prime} ( \tau^{\prime}(0) ),
	c^{\prime\prime} ( \tau^{\prime\prime}(0) )
\Big),
\end{align*}
where
$\mathbf{E}_{ ( m^{\prime}, m^{\prime\prime} ) }$
stands for the expectation with respect to
$\mathbf{P}_{ ( m^{\prime}, m^{\prime\prime} ) }$. 
Hence 
$
C_{ 0, \varphi }^{ t_{1}^{\prime}-s, t_{1}^{\prime\prime}-s }
\big(
	c^{\prime} ( t_{1}^{\prime} - s ),
	c^{\prime\prime} ( t_{1}^{\prime\prime} - s )
\big)
$ is non-increasing in $s$ because we can repeat the same argument 
even if we replace the initial time $0$ with any $s' \in [ 0 , s ]$. 
\end{proof}%%%%%%%%%%%%%%%%%%%%%%%%%%%%%%%%%%%%%%%

\begin{Rm} \label{Correction} %%%%%%%%%%%%%%%%%%%% LABEL: Correction
The proof of the integrability of the $\mathcal{L}$-distance 
between the coupling of Brownian motions by space-time parallel transport 
in \cite{KP2} seems to be incorrect. 
It is worth mentioning that we can recover the same integrability 
as in Theorem \ref{thm:supermart} even in that case 
by the same argument. 

More precisely, the argument in \cite[Lemma 6]{KP2} 
seems to require some modification. 
In the proof, they claimed an inequality 
analogous to Doob's $L^p$-martingale inequality 
for a positive supermartingale. 
However it is not true in general. 
Indeed there is a counterexample to Doob's inequality 
when $p=1$ for a positive martingale $M_n$ (See \cite[Example 5.4.2]{Du}). 
Since the $x \mapsto x^{1/p}$ is nonincreasing and concave 
on $[ 0 , \infty )$ for $p \ge 1$, 
$M_n^{1/p}$ gives a counterexample to Doob's $L^p$-inequality to 
positive supermartingales. 

On the other hand, if we further assume the stronger restriction 
on the Ricci curvature, we can recover a similar integrability 
as stated in \cite[Lemma 6]{KP2}:
\end{Rm} %%%%%%%%%%%%%%%%%%%%%%%%%%%%%%%%%%%%%%%%%

\begin{Prop} \label{Int'ble} %%%%%%%%%%%%%%%%%%%%% LABEL: Int'ble
Suppose $\sup_{0 \le t \le T} | \Ric |_{g(t)} < \infty$. 
Fix arbitrary two points $\M$, $\MM \in M$.
For any coupling $(X_{s},Y_{s})_{0\leq s \leq S}$ of $g(\B(s))$-Brownian motion
$X=(X_{s})_{0\leq s \leq S}$ with $X_{0}=\M$  and
$g(\BB(s))$-Brownian motion $Y=(Y_{s})_{0\leq s \leq S}$ with $Y_{0}=\MM$ ,
$\sup_{0\leq s \leq S}L_{0}^{\B (s), \BB (s)} (X_{s}, Y_{s})$ is integrable.
\end{Prop}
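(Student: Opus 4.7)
The plan is to reduce the integrability of $\sup_{0\le s\le S}L_0^{\tau'(s),\tau''(s)}(X_s,Y_s)$ to an $L^2$-moment bound for the Riemannian distances $\rho_{g(0)}(X_s,\M)$ and $\rho_{g(0)}(Y_s,\MM)$, exploiting the fact that under the bounded-$|\Ric|$ hypothesis the metrics $g(t)$ are uniformly bi-Lipschitz equivalent to $g(0)$ and, a fortiori, all the constants $K_{\pm}, K_+(\cdot), C(\cdot)$ appearing in section~\ref{L0Geom} become global.

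First I would use Proposition~\ref{Est}(i) applied to an arbitrary $g(\tau'(s))$-geodesic joining $X_s$ and $Y_s$ (whose $K_+(c)$ is uniformly bounded by the hypothesis), combined with Proposition~\ref{Est0}(i)--(ii) used in \emph{both} directions of time (the reverse direction being available since the upper bound $\Ric\le K_+g$ also holds), to obtain deterministic constants $C_1,C_2>0$ with
$$
L_0^{\tau'(s),\tau''(s)}(X_s,Y_s)\le C_1\,\rho_{g(0)}(X_s,Y_s)^2+C_2\qquad(0\le s\le S).
$$
Crucially, the denominator $\tau''(s)-\tau'(s)=\TT_1-\T_1$ is a strictly positive constant, so the estimate is uniform in $s$. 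The triangle inequality
$$
\rho_{g(0)}(X_s,Y_s)\le \rho_{g(0)}(X_s,\M)+\rho_{g(0)}(\M,\MM)+\rho_{g(0)}(Y_s,\MM)
$$
then reduces the claim to showing $\mathbf{E}\big[\sup_{0\le s\le S}\rho_{g(0)}(X_s,\M)^2\big]<\infty$ and the analogous bound for~$Y_s$.

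To prove this second moment bound, I would study the radial process $r_s:=\rho_{g(0)}(X_s,\M)$ via Kendall's It\^o formula, regularising in a standard way at the $g(0)$-cut locus of $\M$. Since $X_s$ is a time-inhomogeneous $g(\tau'(s))$-Brownian motion but $g(\tau'(s))$ is uniformly equivalent to $g(0)$, one obtains a decomposition $r_s^2 = r_0^2 + N_s + A_s$, where $N_s$ is a continuous local martingale with $d\langle N\rangle_s \le c\,r_s^2\,ds$ (from the metric equivalence) and $A_s$ is of locally bounded variation with $|dA_s|\le c\,(1+r_s^2)\,ds$. The drift bound follows from the Laplacian comparison theorem applied to $\Delta_{g(0)}$ (valid because the $g(0)$-Ricci curvature is bounded below), together with the fact that $Z_s:=\Delta_{g(\tau'(s))}-\Delta_{g(0)}$ is a uniformly-in-$(s,x)$ bounded first-order differential operator, thanks to $\partial g/\partial t=-2\Ric$ being uniformly bounded. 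Gronwall's inequality combined with the Burkholder--Davis--Gundy inequality then yields $\mathbf{E}\big[\sup_{0\le s\le S}r_s^{2p}\big]<\infty$ for every $p\ge 1$, and the same argument applies verbatim to $Y_s$.

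The main obstacle is the rigorous It\^o calculus on $r_s$ in the simultaneous presence of the $g(0)$-cut locus of $\M$ and the time-dependence of the driving metric. The cut-locus issue is handled by the Kendall--Cranston comparison/regularisation (as is already invoked implicitly in section~\ref{PreL0Cut}), while the time-dependence is treated by the decomposition $\Delta_{g(\tau'(s))}=\Delta_{g(0)}+Z_s$ with $Z_s$ a uniformly bounded first-order perturbation. Since neither modification disturbs the linear-in-$r^2$ structure of both the quadratic-variation bound and the drift bound, the Gronwall argument runs exactly as in the classical fixed-metric, lower-Ricci-bounded case, and combining this with the first-step reduction gives the claimed integrability of $\sup_{s}L_0^{\tau'(s),\tau''(s)}(X_s,Y_s)$.
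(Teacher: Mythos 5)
Your first reduction is sound and matches the paper's: Proposition \ref{Est}(i), with the constant time gap $\BB(s)-\B(s)=\TT_{1}-\T_{1}>0$ and with $K_{+}(c)\le\sup_{t}|\Ric|_{g(t)}$, together with Proposition \ref{Est0}(i) (only the lower Ricci bound is needed to dominate $\rho_{g(\B(s))}$ by $\mathrm{e}^{K_{-}T}\rho_{g(0)}$), gives $L_{0}^{\B(s),\BB(s)}(X_{s},Y_{s})\le C_{1}\rho_{g(0)}(X_{s},Y_{s})^{2}+C_{2}$ uniformly in $s$, and the two-sided bound of Proposition \ref{Est0}(iii) handles the lower side. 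The paper performs the same reduction, bounding $L_{0}$ by $\rho_{g(\B(s))}(X_{s},o)^{2}+\rho_{g(\BB(s))}(o,Y_{s})^{2}$ for a fixed reference point $o$, so everything comes down to a second-moment bound for a radial process.

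The gap is in how you obtain that radial bound. You freeze the metric at $g(0)$ and treat $Z_{s}:=\Delta_{g(\B(s))}-\Delta_{g(0)}$ as a ``uniformly bounded first-order differential operator''. It is neither. The difference of two Laplace--Beltrami operators has principal symbol $g(\B(s))^{-1}-g(0)^{-1}$, which does not vanish, so $Z_{s}$ is genuinely second order; applied to $\rho_{g(0)}(\M,\cdot)^{2}$ it produces a contraction of the \emph{full} Hessian $\Hess_{g(0)}\rho^{2}$ against a nontrivial symmetric $2$-tensor, and an upper bound on $\Hess_{g(0)}\rho$ requires a lower \emph{sectional} curvature bound (Hessian comparison), which is not implied by $\sup_{t}|\Ric|_{g(t)}<\infty$; only the trace $\Delta_{g(0)}\rho$ is controlled by the Ricci bound. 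Moreover, even the first-order part of $Z_{s}$ involves the difference of Christoffel symbols, i.e.\ covariant derivatives of $\int_{0}^{\B(s)}\Ric_{g(u)}\,\D u$, and $|\nabla\Ric|$ is not assumed bounded. Hence the claimed drift bound $|\D A_{s}|\le c(1+r_{s}^{2})\,\D s$ does not follow from the hypotheses. (There is also a secondary issue in applying the Kendall decomposition to the $g(0)$-distance of a $g(\B(s))$-diffusion, since the sign of the singular term is tied to the cut locus of the metric generating the process.) The paper avoids all of this by working with the \emph{time-dependent} radial function $\rho(s,x)=\rho_{g(\B(s))}(o,x)$ and invoking the time-dependent It\^o formula and Laplacian comparison of Kuwada--Philipowski \cite{KP}: the operator $\frac12\Delta_{g(\B(s))}-\partial_{s}$ applied to $\rho(s,\cdot)$ is bounded by $\frac{d-1}{2}\{k_{1}\coth(k_{1}\rho\wedge r_{1})+k_{1}^{2}\,\rho\wedge r_{1}\}$, the term $\partial_{s}\rho$ being controlled by integrating $|\partial_{t}g|=2|\Ric|$ along minimal geodesics --- precisely where the two-sided Ricci bound enters and no Hessian control is ever needed. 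If you substitute that device, the remainder of your argument (Gronwall plus Burkholder--Davis--Gundy, or the paper's one-dimensional comparison with $\D Z_{s}=\sqrt{4Z_{s}\vee 0}\,\D\beta_{s}+(cZ_{s}+1)\,\D s$) goes through.
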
%%%%%%%%%%%%%%%%%%%%%%%%%%%%%%%%%%%%%%%%
\begin{proof}%%%%%%%%%%%%%%%%%%%%%%%%%%%%%%%%%%%%%
Let $o\in M$ be a fixed reference point of $M$.
By Proposition \ref{Est0}(iii) and 
Proposition \ref{Est}(i), we have estimates
\begin{align*}
\mathrm{const.}
&\leq
L_{0}^{\B (s), \BB (s)} (X_{s}, Y_{s}) \\
&\hspace{0mm}\leq
\mathrm{const.} \Big\{
	\rho_{g(\B (s))} (X_{s},o)^{2} + \rho_{g(\BB (s))} (o, Y_{s})^{2}
\Big\}
+
\mathrm{const.}
\end{align*}
where the constants depend on $\sup_{0 \le t \le T} | \Ric |_{g(t)}$,
$T$, $\T_{1}$ and $\TT_{1}$ but not on $s$. 
Therefore, for our statement, it is sufficient to prove the integrability
of $\displaystyle \sup_{0\leq s\leq T} \rho_{g(\B (s))} (X_{s},o)^{2}$. 
By \cite[Theorem 2]{KP}, we see that
\begin{align*}
\D \rho(s,X_{s})
=
\Big\{
\frac{1}{2} \Delta_{g(\B (s))} - \frac{\partial}{\partial s}
\Big\} \rho (s,X_{s}) \D s
+
( U_{s}.e_{i} ) \rho (s,X_{s}) \D W_{s}^{i}
-
\D L_{s},
\end{align*}
denoting $\rho (s,x) :=  \rho_{g(\B (s))} (o, x)$, where $L_{s}$ is a nondecreasing continuous
process which increases only when $X_{s}$ belongs to the cut locus of $o$ with respect to $g(\B (s))$.
Note that the set of $s \in [0 , S]$ where $X_s$ is in $g(\B(s))$-cut locus 
has null Lebesgue measure and hence other quantities are also well-defined. 
Therefore we have
\begin{align*}
&
\D \rho(s,X_{s})^{2} \\
&=
\Big\{
\rho (s,X_{s})
\Big(
\Delta_{g(\B (s))} - 2\frac{\partial}{\partial s}
\Big) \rho (s,X_{s}) \D s
+  \sum_{i=1}^{d} | ( U_{s}.e_{i} ) \rho (s,X_{s}) |^{2}
\Big\} \D s \\
&\hspace{10mm}
- 2 \rho (s,X_{s}) \D L_{s}
+
2 \rho (s,X_{s}) \D \beta_{s}
\end{align*}
where $\D \beta_{s} = ( U_{s}.e_{i} ) \rho (s,X_{s}) \D W_{s}^{i}$ is a one-dimensional
Brownian motion.
Furthermore, Proposition 2 in \cite{KP} shows that
\begin{align*}
&
\rho (s,X_{s})
\Big\{
\frac{1}{2} \Delta_{g(\B (s))} - \frac{\partial}{\partial s}
\Big\} \rho (s,X_{s}) \\
&\hspace{5mm}\leq
\frac{d-1}{2} \rho (s,X_{s})
\Big\{
k_{1} \coth( k_{1} \cdot \rho (s,X_{s}) \wedge r_{1} ) + k_{1}^{2} \cdot \rho (s,X_{s}) \wedge r_{1}
\Big\}
\end{align*}
for some positive constants $k_{1}$ and $r_{1}$ from which we find that
there is some constant $c>0$ such that
\begin{align*}
\rho (s,X_{s})
\Big\{
\frac{1}{2} \Delta_{g(\B (s))} + \frac{\partial}{\partial s}
\Big\} \rho (s,X_{s})
\leq
c \rho (s,X_{s})^{2}
\quad
\text{for each $s\geq 0$ a.s.}
\end{align*}
We also find that
$\displaystyle
\sum_{i=1}^{d} | ( U_{s}.e_{i} ) \rho (s,X_{s}) |^{2} = 1.
$
So we shall apply a comparison argument between $\rho$'s SDE and
\begin{align*}
\left\{\begin{array}{l}
\D Z_{s} = \sqrt{4 Z_{s} \vee 0} \ \D \beta_{s} + ( c Z_{s} + 1 ) \D s , \\
Z_{0} = \rho (0,X_{0}) = \rho_{g( \T_{1} )} ( \M ,o ).
\end{array}\right.
\end{align*}
It is well-known that there is a global unique strong solution $Z=(Z_{s})_{s\geq 0}$ to the
above SDE and this satisfies $Z_{s} \geq 0$ for all $s\geq 0$ a.s. 
By the comparison theorem (e.g., see Theorem 1.1, Chapter V\!I in \cite{IW}), 
we see that $\rho (s,X_{s}) \leq Z_{s}$ for each $s\geq 0$ a.s. 
Since $\displaystyle \sup_{0 \leq u \leq s} Z_{u}^{p}$ is integrable for
any $p\geq 1$ (e.g., see Lemma 2.1, Chapter V in \cite{IW}), 
the conclusion follows. 
\end{proof}%%%%%%%%%%%%%%%%%%%%%%%%%%%%%%%%%%%%%%%

%%%%%%%%%%%%%%%%%%%%%%%%%%%%%%%%%%%%%%%%%%%%%%%%%%%%%%%%%%%%%%%%%%%%%%%%%%%%%%%
%%%%%%%%%%%%%%%%%%% BIBLIOGRAPHY %%%%%%%%%%%%%%%%%%%%%%%%%%%%%%%%%%%%%%%%%%%%%%
%%%%%%%%%%%%%%%%%%%%%%%%%%%%%%%%%%%%%%%%%%%%%%%%%%%%%%%%%%%%%%%%%%%%%%%%%%%%%%%


\begin{thebibliography}{1}
\bibitem{ACT}
Arnaudon, M.; Coulibaly, K. A. and Thalmaier, A.:
Horizontal Diffusion in $C^{1}$ Path Space.
\emph{S{\'e}minaire de Probabilit{\'e}s XLIII}, Springer, 2010, 73--94.


\bibitem{ACT2}
Arnaudon, M.; Coulibaly, K. A. and Thalmaier, A.:
Brownian motion with respect to a metric depending on time: definition, existence and applications to Ricci flow.
C. R. Math. Acad. Sci. Paris \textbf{346} (2008), no. 13-14, 773--778.


\bibitem{B}
Bogachev, V.: 
\emph{Measure theory. Vol. I,II}
Springer-Verlag, Berlin, 2007. 


\bibitem{CZ}
Chen, B. and Zhu, X.: 
Uniqueness of the Ricci flow on complete noncompact manifolds. 
\emph{J. Differential Geom.} \textbf{74}(1) (2006), 119--154.


\bibitem{Ch}
Cheng, L.:
The radial part of Brownian motion with respect to $\mathcal{L}$-distance under Ricci flow.
\emph{arXiv:1211.3626v2}


\bibitem{CCGGIIKLLN}
Chow, B.; Chu, S.-C.; Glickenstein, D.; Guenther, C.; Isenberg, J.; Ivey, T.; Knopf, D.; Lu, P.; Luo, F. and Ni, L.:
\emph{The Ricci flow: techniques and applications. Part I. Geometric aspects.}
Mathematical Surveys and Monographs, \textbf{135}. 
American Mathematical Society, Providence, RI, 2007. 


\bibitem{Cr}
Cranston, M.: 
Gradient estimates on manifolds using coupling.
\emph{J. Funct. Anal.} \textbf{99} (1991), no. 1, 110--124.


\bibitem{Du}
Durrett, R.: 
\emph{Probability: theory and examples. Fourth edition} 
Cambridge Series in Statistical and Probabilistic Mathematics, 
Cambridge University Press, Cambridge, 2010. 


\bibitem{H1}
Hamilton, R. S.:
Three-manifolds with positive Ricci curvature.
\emph{J. Differential Geom.} \textbf{17} (1982), no. 2, 255--306.


\bibitem{IW}
Ikeda, N.; Watanabe, S.
\emph{Stochastic differential equations and diffusion processes.}
Second edition. North-Holland Mathematical Library, \textbf{24}. 
North-Holland Publishing Co., Amsterdam; Kodansha, Ltd., Tokyo, 1989. 


\bibitem{Ke}
Kendall, W. S.:
Nonnegative Ricci curvature and the Brownian coupling property.
\emph{Stochastics} \textbf{19} (1986), no. 1-2, 111--129.


\bibitem{Ke2}
Kendall, W. S.: 
From stochastic parallel transport to harmonic maps 
\emph{New directions in Dirichlet forms,} Amer. Math. Soc., Providence, RI; 
International Press, Cambridge, MA, 1998, 49--115.


\bibitem{KL}
Kleiner, B. and Lott, J.:
Notes on Perelman's papers.
\emph{Geom. Topol.,} \textbf{12}, (2008), no. 5, 2587--2855.


\bibitem{KP}
Kuwada, K. and Philipowski, R.:
Non-explosion of diffusion processes on manifolds with time-dependent metric.
\emph{Math. Z.} \textbf{268} (2011), no. 3-4, 979--991.


\bibitem{KP2}
Kuwada, K. and Philipowski, R.:
Coupling of Brownian motions and Perelman's $\mathcal{L}$-functional
\emph{J. Funct. Anal.} \textbf{260} (2011), 2742--2766. 


\bibitem{Ku2}
Kuwada, K.:
Convergence of time-inhomogeneous geodesic random walks and its application to coupling methods.
\emph{Ann. Probab.} \textbf{40} (2012), no. 5, 1945--1979. 


\bibitem{LiRo}
Lindvall, T. and Rogers, L. C. G.:
Coupling of multidimensional diffusions by reflection.
\emph{Ann. Probab.} \textbf{14} (1986), no. 3, 860--872.


\bibitem{Lo}
Lott, J.
Optimal transport and Perelman's reduced volume. 
\emph{Calc. Var. Partial Differential Equations} \textbf{36} (2009), no. 1, 49--84.


\bibitem{Ma}
Mather, J. N.
Action minimizing invariant measures for positive definite Lagrangian systems.
\emph{Math. Z.} 207 (1991), no. 2, 169--207. 


\bibitem{MT}
McCann, R. J. and Topping, P.:
Ricci flow, entropy and optimal transportation.
\emph{Amer. J. Math.} \textbf{132} (2010), no. 3, 711--730. 


\bibitem{Pe}
Perel'man, G.:
The entropy formula for the Ricci flow and its geometric applications.
\emph{arXiv preprint math/0211159}, 2002.


\bibitem{RS}
von Renesse, M.-K. and Sturm, K.-T.: 
Transport inequalities, gradient estimates, entropy and Ricci curvature.
\emph{Comm. Pure. Appl. Math.} \textbf{58} (2005), no.7, 923--940.


\bibitem{S2}
Shi, W.-X.: 
Deforming the metric on complete Riemannian manifolds. 
\emph{J. Differential Geom.} \textbf{30}(1) (1989), 223--301.


\bibitem{To1}
Topping, P.:
\emph{Lectures on the Ricci flow.}
London Mathematical Society Lecture Note Series, \textbf{325}.
Cambridge University Press, Cambridge, 2006.


\bibitem{To}
Topping, P.:
$\mathscr{L}$-optimal transportation for Ricci flow.
\emph{J. Reine Angew. Math.} \textbf{636} (2009), 93--122.


\bibitem{Vi}
Villani, C.:
\emph{Optimal transport. Old and new.}
Grundlehren der Mathematischen Wissenschaften, 
\textbf{338}. Springer-Verlag, Berlin, 2009. 


\bibitem{Wa}
Wang, F.-Y.: 
\emph{Functional inequalities, Markov semigroups, and spectral theory.} 
Science Press, Beijing, China, 2005. 

\end{thebibliography}
\end{document}